\newcommand{\levy}{L\'{e}vy }
\newcommand{\p}{{\mathbb P}}
\newcommand{\e}{{\mathbb E}}
\newcommand{\D}{{\mathrm d}}
\newcommand{\R}{{\mathbb R}}
\newcommand{\N}{{\mathbb N}}
\renewcommand{\a}{\alpha}
\newcommand{\ind}[1]{\mbox{\rm\large  1}_{\{#1\}}}
\newcommand{\indevent}{{\rm\large  1}}
\newcommand{\ep}{\varepsilon}
\newcommand{\epp}{{(\varepsilon)}}
\newcommand{\X}{{\widehat X}}
\newcommand{\V}{{\widehat V}}
\renewcommand{\O}{{\mathcal O}}
\DeclareMathOperator\RV{RV}
\newcommand{\ii}{{\mathrm i}}
\newcommand{\n}{{(n)}}
\newcommand{\Oh}{{\mathrm{O}}}
\newcommand{\oh}{{\mathrm{o}}}
\newcommand{\eqd}{\overset{{\rm d}}{=}}
\newcommand{\convd}{\overset{{\rm d}}{\to}}
\newtheorem{theorem}{Theorem}
\newtheorem{proposition}{Proposition}
\newtheorem{corollary}{Corollary}
\newtheorem{lemma}{Lemma}
\newtheorem{remark}{Remark}
\begin{document}
\title[Zooming-in on a L\'evy process]{Zooming-in on a L\'evy process:\\ Failure to observe threshold exceedance over a dense grid}
\author[K.\ Bisewski and J.\ Ivanovs]{Krzysztof Bisewski and  Jevgenijs Ivanovs}
\address{Centrum Wiskunde \& Informatica, the Netherlands}
\address{Aarhus University, Denmark}
\keywords{scaling limits; small-time behavior; high frequency; discretization error; supremum}
\subjclass[2000]{60G51 (primary), 60F99 (secondary)} 
\begin{abstract}
	For a L\'evy process $X$ on a finite time interval consider the probability that it exceeds some fixed threshold $x>0$ while staying below $x$ at the points of a regular grid. We establish exact asymptotic behavior of this probability as the number of grid points tends to infinity. We assume that $X$ has a zooming-in limit, which necessarily is $1/\a$-self-similar L\'evy process with $\a\in(0,2]$, and restrict to $\a>1$.
	Moreover, the moments of the difference of the supremum and the maximum over the grid points are analyzed and their asymptotic behavior is derived.
	It is also shown that the zooming-in assumption implies certain regularity properties of the ladder process, and the decay rate of the left tail of the supremum distribution is determined.
\end{abstract}

\maketitle

	\section{Introduction}\label{sec:intro}
	Consider a L\'evy process $X=(X_t,t\geq 0)$ on the real line and let 
	\[M=\sup\{X_t:t\in[0,1]\},\qquad \tau=\inf\{t\geq 0:X_t\vee X_{t-}=M\}\]
	be the supremum and its time, respectively, for the time interval $[0,1]$. 
	For any $n\in\mathbb N_+$ consider also the maximum of $X$ over the regular grid with step size~$1/n$:
	\[M^\n=\max\{X_{i/n}:i=0,\ldots,n \}.\]
	In this paper we derive exact asymtotic behavior of
	\begin{equation}\label{eq:delta_def}\Delta_n(x)=\p(M>x,M^\n\leq x)=\p(M>x)-\p(M^\n>x)\end{equation}
	as $n\to\infty$ for any fixed~$x>0$, which is the probability of failure in detecting threshold exceedance when restricting to the grid time-points.
	On the way towards this goal, we also provide asymptotics of the moments $\e(M-M^\n)^p$ of the discretization error in the approximation of the supremum, markedly improving on the bounds in~\cite{giles} and other works.
	
	The motivation comes from various applications, where it is vital to understand if the process $X$ has exceeded a fixed threshold $x>0$ or not.
	Application areas include, among others, insurance and mathematical finance (pricing of barrier options), energy science (electric load), environmental science (pollution levels and exposure) and computer reliability.
	Normally, we observe the process of interest over a dense regular grid without having full knowledge about the continuous-time trajectory.
	It is then natural to base our judgment on $M^\n$ instead of~$M$. Thus $\Delta_n(x)$ is the probability of  making an error: the process exceeds $x$ but not over the grid points. Furthermore, our limit result can be used to provide a correction to $\p(M>x)$ when approximating it by $\p(M^\n>x)$ derived from Monte Carlo simulation. Or vice versa, it can provide a correction to $\p(M^\n>x)$ in cases when the formula for $\p(M>x)$ is available, see e.g. \cite{broadie_glasserman_kou}.
	It is noted that $M^\n$ always underestimates $M$, and so one can also consider more accurate estimators (but also more complicated, since these are potentially based on all the available information), which we leave for future work.
	
	Our main vehicle is the zooming-in limit theory of~\cite{ivanovs_zooming}, where it is shown 
	under a weak regularity assumption, see~\eqref{eq:zooming} below, that 
	\begin{equation}\label{eq:supremum_error}
	\Big(V^\n\mid\tau\in(0,1)\Big)\convd \V,\qquad\text{ with }\qquad V^\n:=b_n(M-M^\n)
	\end{equation}
	for some specific sequence $b_n>0$ and a random variable~$\V$. More precisely, $\V$ is defined in terms of the law of a self-similar L\'evy process $\X$ (the limit under zooming-in),  see~\eqref{eq:V}, and $b_n$ is chosen such that $b_nX_{1/n}\convd \X_1$.
	
	The convergence in~\eqref{eq:supremum_error} readily suggests that $\e(M-M^\n)^p$ is of order $b_n^{-p}$ and supplements it with exact asymptotics, but only when the underlying sequence of random variables $(V^\n)^p$ is uniformly integrable.
	Establishing the latter, however, is far from trivial and we could only do that thanks to~\cite{bertoin_splitting} providing a representation of the pre- and post-supremum process using juxtaposition of the excursions in half-lines. Interestingly, the scaled moments would explode in some cases if we considered grid points to the right (or to the left) of~$\tau$ only. Furthermore, certain conditions must be fulfilled, and the decay of the moments can never be faster than $1/n$ if $X$ has jumps of both signs.
	
	The intuition behind the asymptotics of the detection error probability $\Delta_n(x)$ is given by the following:
	\begin{align*}
	b_n\Delta_n(x) &= b_n\p(x < M \leq x + b_n^{-1} V^\n)\\
	&\approx \int_0^\infty b_n\p(x < M \leq x + b_n^{-1} y)\p(V^\n \in \D y)\\
	&\to f_M(x)\e \V,
	\end{align*}
	where $f_M$ is a density of~$M$; we also show that $\e \V$ has a simple explicit formula in terms of the basic parameters. The second line is suggested by the asymptotic independence of $V^\n$ and $M$ (the convergence in~\eqref{eq:supremum_error} is R\'enyi-mixing).
	Note also that uniform integrability of $V^\n$ is needed in the last step, which forces us to assume that $X$ has unbounded variation on compacts; we assume that $\a\in(1,2]$, see~\eqref{eq:alpha}.
	Making $\approx$ precise turns out to be a major undertaking. In fact, asymptotic independence between $M$ and $V^\n$ is not enough -- one can construct counterexamples resembling those in~\cite[Theorem~2.4(ii)]{avikainen2009irregular}.
	In addition to exact asymptotics, we also provide bounds on both the moments and the error probability $\Delta_n(x)$ in the cases when the zooming-in assumption~\eqref{eq:zooming} is not satisfied. 
	
	{\it Paper structure}: 
	In Section~\ref{sec:prelim} we set up the notation and provide a thorough discussion of the basic zooming-in assumption~\eqref{eq:zooming} including some important classes of examples.
	Section~\ref{sec:ladder} explores some immediate implications of assumption~\eqref{eq:zooming}, including an invariance principle for the ladder process and regular variation of the associated bivariate exponents and L\'evy measures. This is then used to establish the rates of decay of $\p(M\leq \ep)$ and $\p(\tau\leq \epsilon)$ as $\ep\downarrow 0$. It is noted that the material in the other sections is largely independent of Section~\ref{sec:ladder}. In Section~\ref{sec:moments} we show uniform integrability of $V^\n$ for $\a>1$ and provide moment asymptotics of $\e (M-M^\n)^p$ for $\a>p$ under the zooming-in assumption~\eqref{eq:zooming}; we show a logarithmically tight upper bound otherwise. Section~\ref{sec:detection_error} establishes exact asymptotics of the error probability~$\Delta_n(x)$.
	The proofs of many preparatory and auxiliary results are deferred to Appendix~\ref{app:proofs}, while Appendix~\ref{app:correction} provides a correction to the main proof in~\cite{ivanovs_zooming} which is crucial for the developments in this work.
	
	{\it Literature overview}: The fundamental work in this area is~\cite{asmussen_glynn_pitman1995}, where the limit theorem for $M-M^\n$ was established in the case of a linear Brownian motion~$X$. This sparkled research in various application areas including mathematical finance, see~\cite{broadie_glasserman_steven} for approximations of option prices in discrete-time models using continuous-time counterparts.
	Various expansions and bounds on the expected error $\e(M-M^\n)$ were derived in~\cite{chen_thesis, giles, janssen_leeuwaarden} among others.
	The error probability $\Delta_n(x)$ asymptotics was identified in~\cite{broadie_glasserman_kou} in the case of a linear Brownian motion and later extended in~\cite{dia_lamberton2011} to Brownian motion perturbed by an independent compound Poisson process. 
	An interested reader may consult~\cite{dieker2017euler} for an overview of the literature regarding discretization of Brownian motion, see also~\cite{bisewski} for non-uniform grids. 
	Furthermore, there is a large body of literature in risk theory concerned with stochastic observation times, such as the epochs of an independent Poisson process, see~\cite{albrecher_ivanovs} providing a link between various exit problems for Poissonian and continuous observations. 
	
	There is also a large body of literature concerned with the supremum of a L\'evy process, see~\cite{mijatovic, chaumont_supremum,  suprema_bounds} among many others, and with the small-time behavior of L\'evy processes, see~\cite{bertoin_doney_maller, deng2015shift, doney_fluctuations, figueroa2008small} and references therein. 
	
	\section{Preliminaries and examples}\label{sec:prelim}
	To set up the notation, recall the L\'evy-Khintchine formula
	\[\e e^{\theta X_t}=e^{\psi(\theta)t}, \quad \psi(\theta)=\gamma\theta+\frac{\sigma^2}{2}\theta^2+\int_{\R}\left(e^{\theta x}-1-\theta x\ind{|x|<1}\right)\Pi(\D x)\]
	with $\theta\in \ii\R$ and parameters $\gamma\in\R,\sigma\geq 0,\Pi(\D x)$ where the latter is a L\'evy measure satisfying $\int_{\R} (x^2\wedge 1)\Pi(\D x)<\infty$. 
	In the case of $\int_{-1}^1|x|\Pi(\D x)<\infty$ we have a simplified expression
	\begin{equation}\label{eq:simplified}\psi(\theta)=\gamma'\theta+\frac{\sigma^2}{2}\theta^2+\int_{\R}\left(e^{\theta x}-1\right)\Pi(\D x)\end{equation}
	with $\gamma'\in\R$ being called the linear drift. 
	We write ub.v.\  and b.v.\ for processes of unbounded and bounded variation on compacts, respectively. Recall that b.v.\ case corresponds to $\sigma=0$ and $\int_{-1}^1|x|\Pi(\D x)<\infty$, so that~\eqref{eq:simplified} can be used.
	Furthermore, we let 
	\[\overline X_t=\{X_s:s\in[0,t]\},\qquad \underline X_t=\inf\{X_s:s\in[0,t]\}\] be the running supremum and infimum of $X$, respectively, so that $M=\overline X_1$.
	This notation will be needed in a few places below, where different time horizons and L\'evy processes are used. 
	
	Let us introduce some notation for the tails of $\Pi$:
	\begin{equation}\label{def:Pi_tails}
	\overline\Pi_+(x) = \Pi(x,\infty), \quad \overline\Pi_-(x) = \Pi(-\infty,-x), \quad \overline\Pi(x) = \overline\Pi_+(x) + \overline\Pi_-(x)
	\end{equation}
	with $x>0$.
	We also define the truncated mean and variance functions for $x\in(0,1)$:
	\begin{equation}\label{def:trunc_mv}
	m(x) = \gamma - \int_{x<|y|<1} y\Pi(\D x), \quad v(x) = \sigma^2 + \int_{|y|<x} y^2\Pi(\D x),
	\end{equation}
	which play a fundamental role in the study of small time behavior of~$X$.
	Finally, we write $f\in\RV_\a$ to say that $f$ is a function regularly varying at $0$ with index~$\a$, see~\cite{bingham_regular}. 
	
	\subsection{Important indices}
	Define the following indices:
	\begin{equation}\label{eq:beta}
	\begin{split}
	\beta_0&:=\inf\big\{\beta\geq 0:\int_{|x|<1} |x|^\beta\Pi(\D x)<\infty\big\},\\
	\beta_\infty&:=\sup\big\{\beta\geq 0:\int_{|x|>1} |x|^\beta\Pi(\D x)<\infty\big\}.
	\end{split}
	\end{equation}
	The index $\beta_0\in[0,2]$ provides some basic information about the intensity of small jumps and is often called Blumenthal-Getoor index, whereas $\beta_{\infty}\in[0,\infty]$ is about integrability of big jumps.
	Moreover, let
	\begin{align}\label{eq:alpha}
	\alpha=\begin{cases}2,&\sigma\neq 0\\1,& \text{b.v.\ with }\gamma'\neq 0\\\beta_0, &\text{otherwise}
	\end{cases}
	\end{align}
	and note that necessarily $\a\geq \beta_0$. 
	
	\subsection{Attraction to self-similar processes under zooming in}
	Throughout  most of this work we assume that 
	\begin{equation}\label{eq:zooming}
	X_{\ep}/a_\ep\convd \X_1,\qquad\text{ as }\ep\downarrow 0
	\end{equation}
	for some function $a_\ep>0$ and a random variable $\X_1$, not identically 0. Then necessarily~\cite[Thm.\ 15.12(ii)]{kallenberg} $\X_1$ is infinitely divisible, and the above weak convergence extends to the convergence of the respective processes (in Skorokhod $J_1$ topology): 
	\[(X_{\ep t}/a_\ep)_{t\geq 0}\convd (\X_t)_{t\geq 0}.\] Furthermore, the \levy process $\X$ must be self-similar with Hurst parameter $1/\alpha$ for some $\alpha\in(0,2]$, implying that it is either 
	\begin{itemize}
		\item[(i)] a (driftless) Brownian motion, and then $\a=2$,
		\item[(ii)] a linear drift, and then $\alpha=1$,
		\item[(iii)] a strictly $\a$-stable \levy process, and then $\alpha\in(0,2)$. 
	\end{itemize}
	Note that $\alpha=1$ corresponds to two different classes: drift processes and strictly 1-stable process also known as a Cauchy process (symmetric and drifted), and this suggests that  $\alpha=1$ is often an intricate case.
	Moreover, $a_\ep\in\RV_{1/\alpha}$, that is, the scaling function is regularly varying at ~0 with index $1/\a$. The respective domains of attraction are completely characterized in terms of L\'evy triplets in~\cite{ivanovs_zooming}, which also provides a comprehensive overview of the related literature (the domains of attraction to the Brownian motion and linear drift have been characterized in~\cite{doney_maller} before). We emphasize that the parameter $\alpha$ is necessarily of the form~\eqref{eq:alpha}, see~\cite[Cor.\ 1]{ivanovs_zooming}.
	Moreover, the limit $\X$ is unique up to scaling by a positive constant and $a_\ep$ is unique up to asymptotic proportionality: $a_\ep\sim ca'_\ep$. 
	
	In the following we will make extensive use of the positivity parameter of the attractor:
	\[\rho=\p(\X_1>0),\]
	which can be easily derived from the L\'evy triplet of $\X$ using formulas in~\cite{zolotarev}.
	It is well known that the pair $(\alpha,\rho)$ specifies the self-similar process~$\X$ up to scaling by a positive constant. This is sufficient for our purpose since we may always choose an appropriate scaling sequence $a_\ep$.
	For $\a\in(1,2]$ the range of $\rho$ is given by $\rho\in[1-1/\a,1/\a]$ with the left and right boundary values corresponding to the spectrally-positive and spectrally-negative processes, respectively, see Figure~\ref{fig:arho}.
	For $\a\in(0,1]$ we have $\rho\in[0,1]$ with boundary values corresponding to a decreasing and increasing processes, respectively. Finally, $\a=1,\rho\in(0,1)$ specifies the class of drifted Cauchy processes, whereas $\a=1,\rho=\pm 1$ corresponds to linear drifts with signs~ $\pm$.
	We often write 
	\[X\in\mathcal D_{\a,\rho}\] to say that~\eqref{eq:zooming} holds with $1/\a$-self-similar process $\X$ having positivity parameter~$\rho$.
	Note that $X\in\mathcal D_{\a,\rho}$ implies that $\p(X_\ep>0)\to \rho$ as $\ep\downarrow 0$, which follows from the fact that $\p(\X_1=0)=0$.
	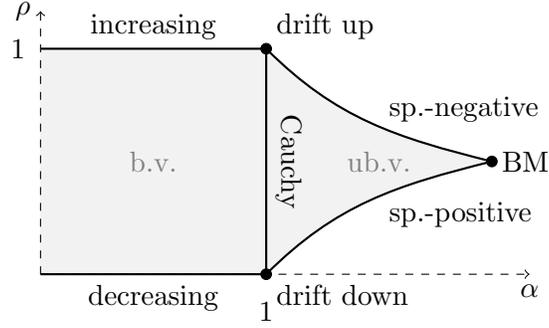
\begin{figure}
		\begin{center}
			\begin{tikzpicture}
			\filldraw[black!5] (0,0)--(0,3) -- (3,3) to[out=-45,in=166] (6,1.5) to[in=45,out=194] (3,0);
			\draw[->,dashed] (0,0) -- (6.5,0) node[anchor=north]{$\alpha$};
			\draw[->,dashed] (0,0) -- (0,3.5) node[anchor=east]{$\rho$};
			\draw[thick] (0,3) --  (3,3) to[out=-45,in=166] (6,1.5) to[in=45,out=194] (3,0)--(0,0);
			\filldraw (6,1.5) circle (2pt) node[anchor=west] {BM};
			\filldraw (3,0) circle (2pt) node[anchor=north west] {drift down};
			\filldraw (3,3) circle (2pt) node[anchor=south west] {drift up};
			\draw(1.5,3) node[anchor=south]{increasing};
			\draw(1.5,0) node[anchor=north]{decreasing};
			\draw(4.5,2.2) node[anchor=west]{sp.-negative};
			\draw(4.5,0.8) node[anchor=west]{sp.-positive};
			\draw[thick] (3,0)--(3,3);
			\node[rotate=-90] at (3.3,1.5){Cauchy};
			\draw[gray](1.5,1.5)node{b.v.};
			\draw[gray](4.5,1.5)node{ub.v.};
			\draw(-0.3,3)node{$1$};
			\draw(3,-0.5)node{$1$};
			\end{tikzpicture}
		\end{center}
		\caption{Self-similar L\'evy processes with parameters $(\a,\rho)$}	
		\label{fig:arho}
	\end{figure}
	
	\subsection{Examples}
	The trivial examples satisfying~\eqref{eq:zooming} are (i) $\sigma>0$ and arbitrary $\gamma,\Pi(\D x)$ and (ii) b.v.\ process with $\gamma'\neq 0$ and otherwise arbitrary $\Pi(\D x)$.
	In case (i) $\X$ is a Brownian motion and $a_\ep$ is asymptotically proportional to $\ep^{1/2}$, and in case (ii) $\X$ is a linear drift (having the same sign as $\gamma'$) and $a_\ep$ is asymptotically proportional to $\ep$.
	
	Let us stress that~\eqref{eq:zooming} is a weak regularity assumption satisfied for almost every L\'evy process of practical interest. The most notable exceptions are driftless compound Poisson process and its neighbors: driftless gamma and variance gamma processes, both of which satisfy $\sigma=\gamma'=0,\overline\Pi(x)\in\RV_0$ as $x\downarrow 0$. In other words, small jump activity is too weak to have a non-trivial limit.
	In the following we briefly consider some important classes of L\'evy processes and establish their zooming-in limits (such examples are missing in~\cite{ivanovs_zooming}).
	
	\subsubsection{Tempered stable processes (also known as CGMY)}
	
	A rather general class of \levy processes is obtained by considering the \levy measure $\Pi(\D x)$ of the form
	\[\Pi(\D x)=c_+x^{-1-\a_+}e^{-\lambda_+ x}\ind{x>0}\D x+c_-|x|^{-1-\a_-}e^{\lambda_- x}\ind{x<0}\D x,\]
	where $c_\pm,\lambda_\pm\geq 0,\a_\pm<2$ and $\a_\pm>0$ if $\lambda_\pm=0$.
	Particular examples are stable, gamma, inverse Gaussian and variance gamma processes.
	When $c_\pm=0$ we put $\alpha_\pm=0$. We assume that there is no Gaussian part ($\sigma=0$) since otherwise $\X$ is a Brownian motion, and that in b.v.\ case ($\a_+,\a_-<1)$ there is no linear drift ($\gamma'=0$) since otherwise $\X$ is a linear drift process.
	
	We have the following cases according to~\cite[Thm.\ 2]{ivanovs_zooming} (for clarity we avoid specifying $\rho$ in some cases):
	\begin{itemize}
		\item if $\a_\mp\leq \a_\pm\in (0,1)\cup(1,2)$ then $\X$ is a strictly $\a_\pm$-stable process; in the case of a strict inequality $\X$  has one-sided jumps of sign~$\pm$.
		\item if $\a_\mp<\a_\pm=1$ or $\a_+=\a_-=1$ with $c_\mp<c_\pm$ then $\X$ is a linear drift process of sign~$\mp$ (the sign might look counterintuitive at first, see Remark~\ref{rem:nef_drift}). 
		\item if $\a_+=\a_-=1$ and $c_+=c_-$ then $\X$ is a Cauchy process.
		\item if $\a_+,\a_-\leq 0$ then such a process does not have a non-trivial limit under zooming-in; the intensity of jumps is too small.
	\end{itemize}
	In particular, the gamma process has no limit, and the same is true for variance gamma processes with 0 mean (in these cases we have $\a_\pm=0$).
	
	\subsubsection{Generalized hyperbolic processes}\label{sec:hyperbolic}
	Another important family of \levy processes is a 5-parameter class of generalized hyperbolic L\'evy motions introduced by Barndorff-Nielsen~\cite{barndorff_nielsen} and advocated for financial use in~\cite{eberlein}. Note that this class includes normal inverse Gaussian processes. Generalized hyperbolic processes have no Gaussian component and their L\'evy measure possesses a density behaving as $C/x^2+\O(1/|x|)$ at 0 for some constant $C>0$, see~\cite[Prop.\ 2.18]{raible2000levy}. Thus such processes are ub.v.\ and satisfy $\overline\Pi_\pm\in\RV_{-1}$ with $\overline\Pi_+(x)/\overline\Pi_-(x)\to 1$ as $x\downarrow 0$.
	We also see that $x\overline\Pi_+(x)\to C$ and with a little more work we find that $\int_{x\leq |y|<1}y\Pi(\D y)$ has a finite limit. Hence according to~\cite[Thm.\ 2]{ivanovs_zooming} every generalized hyperbolic motion is attracted to a Cauchy process.
	
	\subsubsection{Subordination}
	Another popular way to construct a L\'evy process is by considering $X_t=Y_{S_t}$, where $Y$ and $S$ are two independent L\'evy processes and the latter is non-decreasing (a subordinator). In this case, it is sufficient to check~\eqref{eq:zooming} for the underlying processes:
	\begin{lemma}
		Suppose that $Y_\ep/y_\ep\convd \widehat Y_1, S_\ep/s_\ep\convd \widehat S_1$ as $\ep\downarrow 0$ with $y_t,s_t>0$ and non-trivial limits. Then~\eqref{eq:zooming} is satisfied with $\X_1=\widehat Y_{\widehat S_1}$ and $a_\ep=y_{s_\ep}$. 
	\end{lemma}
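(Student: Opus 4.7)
The plan is to prove $X_\ep/a_\ep = Y_{S_\ep}/y_{s_\ep}\convd \widehat Y_{\widehat S_1}$ by combining the functional zooming-in for $Y$ with the distributional convergence for $S$ via a continuous-mapping argument for composition.

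First, I would note that $s_\ep\to 0$ as $\ep\downarrow 0$. Indeed, $S$ is stochastically continuous at $0$, so $S_\ep\to 0$ in probability, and since $S_\ep/s_\ep$ converges to a non-degenerate limit, we must have $s_\ep\to 0$. The functional version of $S_\ep/s_\ep\convd \widehat S_1$ identifies the limit $\widehat S$ as a self-similar L\'evy subordinator (non-decreasing, since $S$ is), and non-triviality of the limit forces $\widehat S_1>0$ almost surely.

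Second, because $s_\ep\to 0$ along $\ep\downarrow 0$, the functional zooming-in for $Y$ yields $(Y_{s_\ep t}/y_{s_\ep})_{t\geq 0}\convd (\widehat Y_t)_{t\geq 0}$ in the Skorokhod $J_1$ topology on $D([0,\infty),\R)$; see the discussion around~\eqref{eq:zooming}. Independence of $Y$ and $S$ at every $\ep$ is preserved in the limit (by a standard characteristic-function argument), so I would upgrade this to the joint convergence
\[\left((Y_{s_\ep t}/y_{s_\ep})_{t\geq 0},\ S_\ep/s_\ep\right)\convd (\widehat Y,\ \widehat S_1),\]
with $\widehat Y$ independent of $\widehat S_1$ in the limit.

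Third, I would apply the continuous mapping theorem to the evaluation map $(f,u)\mapsto f(u)$ on $D([0,\infty),\R)\times[0,\infty)$, which is continuous at every pair $(f,u)$ for which $f$ is continuous at $u$. For each fixed $u>0$, $\p(\widehat Y \text{ jumps at } u)=0$; conditioning on $\widehat S_1$ (independent of $\widehat Y$ and strictly positive a.s.) and applying Fubini gives $\p(\widehat Y \text{ is discontinuous at } \widehat S_1)=0$. Hence
\[X_\ep/a_\ep = Y_{s_\ep\cdot (S_\ep/s_\ep)}/y_{s_\ep}\convd \widehat Y_{\widehat S_1},\]
as claimed. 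The main subtlety is this last step: the composition continuous mapping requires the limit path to be continuous at the random composition time, which is precisely where independence of the two limit objects and strict positivity of $\widehat S_1$ enter.
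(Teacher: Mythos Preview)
Your proof is correct but takes a genuinely different route from the paper. The paper works entirely with exponents: using the subordination identity $\psi_X=\psi_S\circ\psi_Y$, it writes
\[
\psi_X(\theta/a_\ep)\,\ep=\psi_S\big(\psi_Y(\theta/y_{s_\ep})\big)\,\ep,
\]
replaces $\psi_Y(\theta/y_{s_\ep})$ by $\psi_{\widehat Y}(\theta)/s'_\ep$ with $s'_\ep\sim s_\ep$ (coming from the zooming-in for $Y$ along the sequence $s_\ep\to 0$), and then invokes the zooming-in for $S$ to conclude $\psi_S(\psi_{\widehat Y}(\theta)/s'_\ep)\,\ep\to\psi_{\widehat S}(\psi_{\widehat Y}(\theta))$; the replacement of $s_\ep$ by the asymptotically equivalent $s'_\ep$ is harmless because the scaling function is regularly varying. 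This is a three-line analytic computation. You instead upgrade the zooming-in for $Y$ to process-level convergence in $D[0,\infty)$, couple it with $S_\ep/s_\ep$ via independence, and apply the continuous mapping theorem to the evaluation map $(f,u)\mapsto f(u)$, having checked that $\widehat Y$ is a.s.\ continuous at the independent time $\widehat S_1>0$. Your argument is more probabilistic and makes the pathwise identity $\X_1=\widehat Y_{\widehat S_1}$ transparent; the paper's is shorter but leans on the regular-variation of $s_\ep$ and the (implicit) extension of the convergence of Laplace exponents of $S$ to complex arguments with non-negative real part.
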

	\begin{proof}
		With obvious notation we have for any $\theta\in\ii\R$ that $\psi_Y(\theta/y_\ep)\ep\to\psi_{\widehat Y}(\theta)$ and a similar statement is true for~$S$.
		Now compute
		\[\psi(\theta/a_\ep)\ep=\psi_S(\psi_Y(\theta/y_{s_\ep}))\ep=\psi_S(\psi_{\widehat Y}(\theta)/s'_\ep)\ep\to \psi_{\widehat S}(\psi_{\widehat Y}(\theta)),\]
		where $s_\ep'\sim s_\ep$ and such a change is irrelevant for the limit.
	\end{proof}
	Letting $\a_Y,\a_S\in(0,1]$ be the corresponding indices, see~\eqref{eq:alpha}, we find that $\a=\a_Y\a_S$; one can see this by recalling that $a\in\RV_{1/\a}$.
	Note also that $\rho=\rho_{\widehat Y}$.
	Thus subordination allows for a direct description of the limiting process without the need to identify the L\'evy triplet of~$X$.
	For example, the normal inverse Gaussian must belong to $\mathcal D_{1,1/2}$. That is, the limit is a Brownian motion subordinated by the $1/2$-stable subordinator and this yields a Cauchy process as already observed in~\S\ref{sec:hyperbolic}.

	\subsubsection{Change of measure}
	Another important tool is the exponential change of measure, also known as the Esscher transform.
	It turns out that~\eqref{eq:zooming} is invariant under an arbitrary measure change. 
	In this regard we consider an absolutely continuous measure $\mathbb Q\ll\p$ with respect to the sigma algebra $\mathcal F_1=\sigma(X_t,t\in[0,1])$.
\begin{lemma}
Assume~\eqref{eq:zooming} and consider $\mathbb Q\ll\p$ on $\mathcal F_1$. Then $X_{\varepsilon}/a_\varepsilon$ under $\mathbb Q$ weakly converges to $\X_1$ as $\varepsilon\downarrow 0$.
\end{lemma}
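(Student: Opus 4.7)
The plan is to reduce everything to an $L^1$-type statement about the Radon--Nikodym density and then combine it with the original convergence under $\p$. Write $Z=\D\q/\D\p$ for the density on $\mathcal F_1$, so that $Z\geq 0$, $\e_\p Z=1$, and for any bounded continuous $f:\R\to\R$,
\[
\e_\q f(X_\ep/a_\ep)=\e_\p\!\left[Z\,f(X_\ep/a_\ep)\right].
\]
Since $X_\ep/a_\ep$ is $\mathcal F_\ep$-measurable, tower yields $\e_\p[Z\,f(X_\ep/a_\ep)]=\e_\p[Z_\ep\, f(X_\ep/a_\ep)]$, where $Z_\ep:=\e_\p[Z\mid\mathcal F_\ep]$.

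Next I would identify the almost sure and $L^1(\p)$ limit of $Z_\ep$ as $\ep\downarrow 0$. The family $(\mathcal F_\ep)_{\ep>0}$ is decreasing as $\ep\downarrow 0$, so $(Z_\ep)$ is a backward/reverse martingale and the reverse martingale convergence theorem gives $Z_\ep\to\e_\p[Z\mid \mathcal F_{0+}]$ both almost surely and in $L^1(\p)$. Blumenthal's 0--1 law for the L\'evy process implies that $\mathcal F_{0+}$ is $\p$-trivial, so this limit is the deterministic constant $\e_\p Z=1$. Consequently $\e_\p|Z_\ep-1|\to 0$.

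Finally, decompose
\[
\e_\q f(X_\ep/a_\ep)=\e_\p f(X_\ep/a_\ep)+\e_\p\!\left[(Z_\ep-1)\,f(X_\ep/a_\ep)\right].
\]
The first term tends to $\e f(\X_1)$ by the standing assumption~\eqref{eq:zooming}, and the second is bounded in absolute value by $\|f\|_\infty\,\e_\p|Z_\ep-1|\to 0$. Since $f$ was an arbitrary bounded continuous function, the Portmanteau theorem yields $X_\ep/a_\ep\convd\X_1$ under $\q$, which is what we wanted.

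The only non-routine ingredient is the $L^1$ convergence $Z_\ep\to 1$; this is the sole place where the structure of the L\'evy filtration enters, through Blumenthal's 0--1 law. Everything else is a short bounded-convergence argument, and the result is in fact completely insensitive to the specific form of the scaling $a_\ep$ or of the self-similar attractor $\X$.
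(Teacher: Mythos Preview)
Your proof is correct. It differs from the paper's argument in the technical tool used to establish asymptotic independence of $X_\ep/a_\ep$ from $\mathcal F_1$. The paper proves the (stronger) statement that the convergence in~\eqref{eq:zooming} is R\'enyi-mixing: for every bounded continuous $f$ and every integrable $L$, $\e[f(X_\ep/a_\ep)L]\to\e f(\X_1)\,\e L$. They verify this by the Aldous--Eagleson criterion, i.e.\ joint convergence of $(X_\ep/a_\ep,X_{t_1},\ldots,X_{t_n})$ to an independent pair, which follows directly from the independence of increments by writing $X_{t_i}=(X_{t_i}-X_\ep)+X_\ep$. You instead condition on $\mathcal F_\ep$, invoke the reverse martingale convergence theorem for $Z_\ep=\e[Z\mid\mathcal F_\ep]$, and appeal to Blumenthal's $0$--$1$ law to identify the $L^1$ limit as $1$.

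The two arguments are essentially equivalent in scope: replacing $Z$ by an arbitrary integrable $L$ in your computation yields R\'enyi-mixing as well, so neither is more general for this lemma. Your route is slightly more self-contained (no external reference to a mixing criterion) and makes transparent that only triviality of $\mathcal F_{0+}$ is needed; the paper's route has the advantage of explicitly recording the mixing property, which is reused elsewhere in the paper (cf.\ the discussion around~\eqref{eq:supremum_error} and the proof of Proposition~\ref{prop:passage}).
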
	
	\begin{proof}
	It is sufficient to show that the convergence in \eqref{eq:zooming} is R\'enyi-mixing:
	\begin{equation}\nonumber\e(f(X_{\varepsilon}/a_\varepsilon)L)\to \e f(\X_1)\e L\end{equation}
	for an arbitrary integrable random variable~$L$ and a bounded continuous function~$f$, since then we take $L$ to be the respective Radon-Nikodym derivative and note that $\e L=1$.
	According to~\cite[Prop.\ 2(D'')]{aldous} it is sufficient to establish the joint convergence
	\[(X_{\varepsilon}/a_\varepsilon,X_{t_1},\ldots,X_{t_n})\convd(\X_1,X_{t_1},\ldots,X_{t_n})\qquad\varepsilon\downarrow 0,\]
	where $\X_1$ is independent of~$X$. This statement follows easily by writing $X_{t_i}=(X_{t_i}-X_\varepsilon)+X_\varepsilon$ on the left hand side and using the independence of increments.
	\end{proof}

	\subsubsection{A sufficient condition}
	The following result provides an easy-to-check condition which implies~\eqref{eq:zooming}. It does not allow, however, to check attraction to Cauchy processes. Note that one may also check the following condition for $-X$ instead of~$X$.
	\begin{proposition}\label{prop:zoom_sufficient}
		Assume that $\sigma=0$, $\gamma'=0$ in b.v.\ case, and $\overline\Pi_+\in\RV_{-\a}$ with $\a\in(0,2]$.
		Then \eqref{eq:zooming} holds true in the following cases:
		\begin{itemize}
			\item $\a=2$ and $\liminf_{x\downarrow 0}\overline\Pi_+(x)/\overline\Pi_-(x)>0$,
			\item $\a=1$ and $\liminf_{x\downarrow 0}\overline\Pi_+(x)/\overline\Pi_-(x)>1$\\ (positive/negative linear drift limit according to b.v./ub.v.), 
			\item $\a\in(0,1)\cup(1,2)$ and $\lim_{x\downarrow 0}\overline\Pi_+(x)/\overline\Pi_-(x)\in(0,\infty]$.
		\end{itemize}
	\end{proposition}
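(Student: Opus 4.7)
The plan is to verify the L\'evy continuity criterion $\ep\,\psi(\theta/a_\ep)\to\psi_{\widehat X}(\theta)$ for $\theta\in\ii\R$ with a suitable scaling $a_\ep\in\RV_{1/\a}$, which gives~\eqref{eq:zooming}. Since $\oPi_+\in\RV_{-\a}$, there is an asymptotic inverse so that $\ep\,\oPi_+(a_\ep)\to 1$; the sided hypotheses on $\oPi_+/\oPi_-$ then transfer, via Potter bounds and Karamata's theorem, to regular-variation control on $\oPi_-$ and on the truncated moments $m(x)$ and $v(x)$. The Gaussian piece vanishes because $\sigma=0$, and in the b.v.\ case the drift piece disappears because $\gamma'=0$ and $\ep\gamma'\theta/a_\ep=\oh(1)$.

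For $\a\in(0,1)\cup(1,2)$, after the substitution $x\mapsto x/a_\ep$ the jump part of the exponent satisfies
\[\ep\int_{\R}\bigl(\ee^{\theta x/a_\ep}-1-\theta x\ind{|x|<1}/a_\ep\bigr)\Pi(\D x)\longrightarrow\int_\R\bigl(\ee^{\theta x}-1-\theta x\ind{\a>1}\bigr)\Pi^{\widehat X}(\D x),\]
where $\Pi^{\widehat X}$ is a stable L\'evy measure of index $\a$ whose tail ratio equals $\lim\oPi_+/\oPi_-$; the residual linear term disappears (in $\a<1$ because $\ep\gamma\theta/a_\ep=\oh(1)$, in $\a>1$ after absorbing it into the stable centering). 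For $\a=2$ the condition $\liminf\oPi_+/\oPi_->0$ ensures that the truncated variance $v$ is slowly varying at $0$ with no asymptotic cancellation between the two sides; choosing $a_\ep$ so that $\ep\,v(a_\ep)/a_\ep^2\to 1$, the quadratic term in the Taylor expansion of $\ee^{\theta x/a_\ep}$ produces the Brownian exponent $\theta^2/2$, while the first-order piece $\ep\,m(a_\ep)\theta/a_\ep$ is shown to be $\oh(1)$ by Karamata applied to $y\,\Pi_\pm(\D y)$.

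The delicate case is $\a=1$, where both $v$ and the truncated first moment $m$ are slowly varying and neither automatically dominates. The strict inequality $\liminf\oPi_+/\oPi_->1$ guarantees that, after rescaling, the net jump contribution to the drift is strictly of one sign: strictly positive in the b.v.\ case, and strictly negative in the ub.v.\ case (where the sign reverses because the L\'evy-Khintchine compensator subtracts a truncated mean whose imbalance dominates the raw jumps, cf.~Remark~\ref{rem:nef_drift}); the quadratic and higher-order remainders are $\oh(1)$, yielding the linear drift limit with the announced sign. The main obstacle is precisely this $\a=1$ analysis -- verifying that the slowly varying pieces cancel up to an order-one non-zero constant of the correct sign under only a \emph{strict} one-sided $\liminf$ hypothesis. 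A clean alternative is to appeal to the full characterization~\cite[Thm.\ 2]{ivanovs_zooming}: its conditions for membership in $\mathcal D_{\a,\rho}$ are matched exactly by the sided $\liminf$ hypotheses above (the Cauchy sub-case, which requires a two-sided balance, is precisely what is excluded), reducing the proof to identifying $a_\ep$ in each subcase.
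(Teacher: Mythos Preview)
Your ``clean alternative'' at the end --- verifying the conditions of~\cite[Thm.\ 2]{ivanovs_zooming} --- is exactly the paper's route, and for $\a\in(0,1)\cup(1,2)$ it is indeed immediate. But you undersell the work for $\a\in\{1,2\}$: the conditions of that theorem are \emph{not} ``matched exactly'' by the one-sided $\liminf$ hypotheses; establishing them is the whole content of the proof. Concretely, for $\a=2$ one must show $x^2\overline\Pi(x)/v(x)\to 0$, and for $\a=1$ one must show $m(x)/(x\overline\Pi(x))\to\pm\infty$ (divergence, not convergence to a nonzero constant as your phrasing suggests). The difficulty is that the hypotheses only give $\overline\Pi_-\le C\,\overline\Pi_+$ (resp.\ $\overline\Pi_+-\overline\Pi_-\ge c\,\overline\Pi_+$) eventually, so $\overline\Pi_-$ need not be regularly varying and standard Karamata does not apply to it. The paper uses the boundary case of Karamata's theorem \cite[Thm.\ 1.5.9a--b]{bingham_regular}, applied to $\ell(x)=x^2\overline\Pi_+(x)\in\RV_0$ for $\a=2$ and to $\ell(x)=x\overline\Pi_+(x)\in\RV_0$ for $\a=1$, together with integration by parts, to extract the required divergence from the one-sided control.

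Your primary direct route via $\ep\,\psi(\theta/a_\ep)$ would need the same computation. In particular, for $\a=1$ your initial scaling $\ep\,\overline\Pi_+(a_\ep)\to 1$ is wrong for a drift limit (it makes the drift term blow up rather than settle to a constant); the correct $a_\ep$ is defined through~$m$, and then showing that the jump contribution vanishes under \emph{that} scaling is exactly the statement $x\overline\Pi(x)/m(x)\to 0$. So the direct approach is not a shortcut --- it is the same boundary-Karamata verification repackaged.
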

	\begin{proof}
		See Appendix \ref{app:s_prelim} for the proof.
	\end{proof}
	\begin{remark}\label{rem:nef_drift}\rm
		In the case $\a=1$  in Proposition~\ref{prop:zoom_sufficient} we assume that the positive jumps are dominant and show that $\X$ is then a linear drift. One expects that this drift is positive, which is indeed true when $X$ is b.v. If, however, $X$ is ub.v.\ then the limiting drift process has a negative slope, which on an intuitive level can be explained by the standard construction of $X$ as the limit of compensated compound Poisson processes. It turns out that the compensating drift `wins' -- it determines the sign.
	\end{remark}
	
	Let us stress that the case $\a=1$ with $\overline\Pi_+(x)\sim\overline\Pi_-(x)\in\RV_{-1}$ does not guarantee~\eqref{eq:zooming}. In this case, according to~\cite[Thm.\ 2]{ivanovs_zooming}, we need to check that $m(x)/(x\overline\Pi(x))$ has a limit in $[-\infty,\infty]$ with $m(x)$ defined in~\eqref{def:trunc_mv}; finite limits correspond to $\X$ being Cauchy.
	In the following, we provide an example where the latter does not hold true and thus~\eqref{eq:zooming} is not satisfied.
	
	We consider an ub.v.\ L\'evy process with 
	\[\sigma=\gamma=0, \ \ \text{ and } \quad \overline\Pi_+(x)=(1+u(x))/x,\qquad  \overline\Pi_-(x) = 1/x\]
	for small enough $x$, where $u(x)=\sin(\log(-\log x))/\log x\to 0$.
	One can verify that $(1+u(x))/x$ is decreasing for small enough $x$ and so we have legal $\overline \Pi_\pm(x)$ functions, which are asymptotically equivalent and $\RV_{-1}$. Now compute
	\[m(x)=c+\int_x^h y \D \frac{u(y)}{y}=c'-u(x)-\int_x^hy^{-1}u(y)\D y,\]
	where $h>0$ is some small number and $c,c'$ are constants. The latter integral evaluates to $\cos(\log(-\log x))-c''$ which is an oscillating function as $x\downarrow 0$ and the same is true about $m(x)$. Finally, $x\overline \Pi(x)\to 2$ and we see that $m(x)/(x\overline\Pi(x))$ oscillates as well, which shows that~\eqref{eq:zooming} does not hold.

	\section{First implications of the attraction assumption}\label{sec:ladder}
	
	Various properties of a process $X$ are inherited by the attractor $\X$; we assume~\eqref{eq:zooming} throughout this section. There are, however, numerous exceptions which may look surprising at first.
	For example, if $X$ does not hit $(0,\infty)$ immediately (point $0$ is irregular for $(0,\infty)$ for the process~$X$) then also $\X$ does not hit $(0,\infty)$ immediately, implying that $\X$ is decreasing, i.e., $\rho=0$. It is not true in general, however, that if $X$ hits $(0,\infty)$ immediately then so does $\X$, nor it is true that b.v.\ (ub.v.) process is attracted to b.v.\ (ub.v.) process; some counterexamples are given in \cite[Section~4.2]{ivanovs_zooming}.
	
	Recall that $X$ creeps upward if $\p(X_{\tau_x}=x)>0$ for some $x>0$ (and then for all).  Let us note that  a self-similar L\'evy process creeps upward if and only if it is spectrally-negative $\a\rho=1$, which can be readily verified from~\cite[Thm.\ VI.19(ii)]{bertoin}.
	Importantly, the creeping property is preserved when taking the zooming-in limit, see Corollary~\ref{cor:creeping} below.
	
	Finally, let us note that the material of this section is not required for Section~\ref{sec:moments} and Section~\ref{sec:detection_error}, apart from a simple bound in Lemma~\ref{lem:density_sup}. Nevertheless, the results of this section, apart from being of independent interest, may be used to gain further insight into the main quantities, see, e.g. the discussion at the beginning of Section \ref{sss:big_jumps}
	

	\subsection{Convergence of ladder processes}
	Let $( L^{-1}_t, H_t)$ be the ascending ladder process with the Laplace exponent $\kappa(a,b)$: $\e \exp(-a L^{-1}_t-b H_t)=\exp(-\kappa(a,b) t)$ and the local time $ L_t$ is normalized so that $\kappa(1,0)=1$. 
	This is always possible, even when $X$ is a decreasing process in which case $k(a,b)=1$ (the ladder process stays at $(0,0)$ before it is killed at rate 1).
	We use the obvious notation $q_{L^{-1}},\gamma'_{ L^{-1}}$ and $\Pi_{ L^{-1}}(\D x)$ to denote the killing rate, drift  and L\'evy measure of the ladder time process; similar notation is used with respect to the ladder height process~$H$.
	We write $(\widehat{ L}_t^{-1},\widehat{ H}_t)$ and $\widehat{\kappa}(a,b)$ for the ascending ladder process corresponding to $\X$ and its Laplace exponent, respectively. 
	Furthermore, we consider It\^o measure $ n(\cdot)$ of the excursions from the supremum, i.e., of the process $\overline X_t-X_t$,
	and let $\underline n(\cdot)$ be the analogue for $-X$ process (excursions from infimum for the original process). 
	It is tacitly assumed that $X_t\in\D y$ under $n(\cdot)$ and $\underline n(\cdot)$ implies $t<\zeta$, where $\zeta$ is the lifetime of an excursion.
	We refer the reader to~\cite{bertoin} for the construction and basic properties of these objects.
	
	Let us explicitly state a simple consequence of~\eqref{eq:zooming} observed in~\cite{invariance_chaumont_doney} and~\cite{ivanovs_zooming}.
	We note that when $X$ does not hit $(0,\infty)$ immediately the ladder process is a bi-variate compound Poisson, and the following result would be useless.
	\begin{proposition}\label{prop:loctime}
		If $X$ hits $(0,\infty)$ immediately and satisfies~\eqref{eq:zooming} then with $c_\ep=1/\kappa(1/\ep,0)$ we have
		\[\left(\frac{ L_{tc_\ep}^{-1}}{\ep},\frac{ H_{tc_\ep}}{a_\ep}\right)_{t\geq 0}\convd (\widehat{ L}_t^{-1},\widehat{ H}_t)_{t\geq 0}\qquad \text{ as }\ep\downarrow 0,\]
		and in terms of Laplace exponents:
		\[\frac{\kappa(a/\ep,b/a_\ep)}{\kappa(1/\ep,0)}\to\widehat{\kappa}(a,b).\]
		In particular, $( L^{-1}, H)$ has a zooming-in limit $(\widehat{ L}^{-1},\widehat{ H})$ with a pair of scaling functions $(c^{-1}_\ep,a_{c^{-1}_\ep})$, where $c_\ep^{-1}$ is the inverse of $c_\ep$.
	\end{proposition}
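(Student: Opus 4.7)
The plan is to deduce both claims from the functional zooming-in convergence $X^{(\ep)}:=(X_{\ep t}/a_\ep)_{t\ge 0}\convd (\widehat X_t)_{t\ge 0}$ in the Skorokhod $J_1$ topology, combined with the invariance principle for ladder processes of~\cite{invariance_chaumont_doney}. The hypothesis that $0$ is regular for $(0,\infty)$ for $X$ is needed so that the ascending ladder process is not a compound Poisson with a killing atom at the origin of its local time; regularity is inherited by $\widehat X$ (as recalled in the opening of this section), so the limit $(\widehat L^{-1},\widehat H)$ is a genuine bivariate subordinator and Chaumont--Doney applies.

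First I would identify the ladder process of $X^{(\ep)}$ with a rescaling of the ladder process of $X$. The pathwise relation $\overline{X^{(\ep)}}_t=\overline X_{\ep t}/a_\ep$ shows that, up to the free multiplicative constant in the local time, one may take
\[L^{(\ep)}_t \,=\, \kappa(1/\ep,0)\,L_{\ep t},\qquad L^{-1,(\ep)}_s \,=\, L^{-1}_{s\kappa(1/\ep,0)}/\ep,\qquad H^{(\ep)}_s\,=\,H_{s\kappa(1/\ep,0)}/a_\ep,\]
so that the Laplace exponent $\kappa^{(\ep)}$ of $(L^{-1,(\ep)},H^{(\ep)})$ satisfies $\kappa^{(\ep)}(a,b)=\kappa(a/\ep,b/a_\ep)/\kappa(1/\ep,0)$ and in particular $\kappa^{(\ep)}(1,0)=1=\widehat\kappa(1,0)$, matching the prescribed normalization. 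With $c_\ep=1/\kappa(1/\ep,0)$ this is exactly the rescaling in the statement.

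Next I would invoke the Chaumont--Doney invariance principle: since $X^{(\ep)}\convd\widehat X$ in $J_1$ and $0$ is regular upward for both, the normalized ladder processes of $X^{(\ep)}$ converge in $J_1$ to those of $\widehat X$, that is $(L^{-1,(\ep)}_t,H^{(\ep)}_t)_{t\ge 0}\convd(\widehat L^{-1}_t,\widehat H_t)_{t\ge 0}$, which is the first assertion. The Laplace exponent convergence then follows from the process convergence by taking expectations of $e^{-aL^{-1,(\ep)}_t-bH^{(\ep)}_t}$, passing to the limit using bounded convergence, and reading off $\kappa^{(\ep)}(a,b)\to\widehat\kappa(a,b)$ at a single (hence, by monotonicity and continuity of Laplace exponents, at every) $(a,b)$.

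The main obstacle I expect is the bookkeeping around the local time normalization: the ladder process is only defined up to a multiplicative constant in the choice of local time, and one must verify that the $c_\ep$-rescaling is compatible with the convention $\widehat\kappa(1,0)=1$ used for the limit. Once this is arranged the rest is essentially a corollary of the functional zooming-in limit and the Chaumont--Doney theorem; no new analytic work is required. (The second obstacle is a routine check that the ladder regularity passes to the limit, which is dispensed with by the observation, made just before the proposition, that irregularity of $X$ for $(0,\infty)$ would force $\rho=0$ and thus would contradict the standing hypothesis.)
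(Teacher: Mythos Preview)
Your approach is essentially the paper's: identify the ladder process of the rescaled process $X^{(\ep)}$ as $(L^{-1}_{tc_\ep}/\ep,\,H_{tc_\ep}/a_\ep)$ with the normalization $c_\ep=1/\kappa(1/\ep,0)$ pinned down by $\kappa^{(\ep)}(1,0)=1$, and then cite an external convergence result. The paper cites \cite[Appendix~C]{ivanovs_zooming} (and the preamble mentions \cite{invariance_chaumont_doney}), while you invoke Chaumont--Doney directly; the Laplace-exponent statement is then immediate either way.

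One slip to flag: you write that ``regularity is inherited by $\widehat X$ (as recalled in the opening of this section)'', but the opening of Section~\ref{sec:ladder} says precisely the opposite---it is \emph{not} true in general that if $X$ hits $(0,\infty)$ immediately then so does $\widehat X$; counterexamples are in \cite[\S4.2]{ivanovs_zooming}. Your parenthetical about irregularity forcing $\rho=0$ concerns $X$, not $\widehat X$, so it does not repair the claim. This does not undermine the scaling identification, but it means your justification for why Chaumont--Doney applies to the limit needs adjusting (the relevant point is that the self-similar limits with $\rho>0$ are automatically regular upward; when $\rho=0$ the limit ladder process is degenerate and the statement is read accordingly).
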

	\begin{proof}
		The local time of $X_{t\ep}/a_\ep$ is given by $ L_{t\ep}/c_\ep$, and the corresponding scaling requirement then reads: $\kappa(1/\ep,0)c_\ep=1$ yielding $c_\ep=1/\kappa(1/\ep,0)$.
		Thus the inverse local time and the ladder height processes are given by
		\[ L^{-1}_{tc_\ep}/\ep,\qquad X_{ L^{-1}_{tc_\ep}}/a_\ep= H_{tc_\ep}/a_\ep.\] 
		Finally, the weak convergence is stated in e.g.~\cite[Appendix C]{ivanovs_zooming}.
	\end{proof}
	
	Various interesting results follow immediately from the convergence in Proposition~\ref{prop:loctime}, and the following expressions for the $1/\a$-self-similar L\'evy process $\X$ with positivity parameter~$\rho$:
	\begin{equation}\label{eq:kappa}
	\widehat{\kappa}(a,0)=a^\rho\qquad \widehat{\kappa}(0,b)=\widehat{\kappa}(0,1)b^{\alpha\rho},
	\end{equation} see e.g.~\cite[\S VIII.1]{bertoin}.
	This, in particular, means that $\widehat{ L}^{-1}$ and $\widehat{ H}$ are $\rho$- and $\a\rho$-self-similar, respectively, when $\rho\neq 0$. 
	The case $\rho=0$ corresponds to a process staying at $0$ before getting killed, that is, the limit is trivial.
	\begin{corollary}\label{cor:kappa}
		If $X\in\mathcal D_{\a,\rho}$ then $\kappa(1/\ep,0)\in\RV_{-\rho}$ and $\kappa(0,1/\ep)\in\RV_{-\a\rho}$. 
	\end{corollary}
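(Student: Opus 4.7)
The plan is to read off regular variation directly from the Laplace exponent convergence in Proposition~\ref{prop:loctime}, combined with the explicit form of $\widehat\kappa$ in~\eqref{eq:kappa}. Assume first that $X$ hits $(0,\infty)$ immediately so that Proposition~\ref{prop:loctime} applies. Setting $b=0$ in the stated convergence gives
\[\frac{\kappa(a/\ep,0)}{\kappa(1/\ep,0)}\longrightarrow \widehat\kappa(a,0)=a^\rho\qquad(\ep\downarrow 0)\]
for every $a>0$. Substituting $t=1/\ep$ this reads $\kappa(at,0)/\kappa(t,0)\to a^\rho$ as $t\to\infty$, i.e.\ $t\mapsto\kappa(t,0)$ is regularly varying at infinity with index $\rho$. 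Equivalently $\ep\mapsto\kappa(1/\ep,0)$ is regularly varying at $0$ with index $-\rho$, which is the first claim.

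For the second claim I would set $a=0$ in Proposition~\ref{prop:loctime} and use the second relation in~\eqref{eq:kappa}:
\[\frac{\kappa(0,b/a_\ep)}{\kappa(1/\ep,0)}\longrightarrow \widehat\kappa(0,1)\,b^{\a\rho}\qquad(\ep\downarrow 0).\]
Taking $b=1$ yields $\kappa(0,1/a_\ep)\sim \widehat\kappa(0,1)\,\kappa(1/\ep,0)$. By the first part, $\kappa(1/\ep,0)=\ep^{-\rho}\ell(\ep)$ for some $\ell$ slowly varying at $0$, and since $a_\ep\in\RV_{1/\a}$ at $0$ its asymptotic inverse $a^{-1}$ lies in $\RV_{\a}$ at $0$, say $a^{-1}(\delta)=\delta^{\a}\tilde\ell(\delta)$. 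Writing $\delta=a_\ep$ and substituting gives
\[\kappa(0,1/\delta)\sim \widehat\kappa(0,1)\bigl(a^{-1}(\delta)\bigr)^{-\rho}\ell\!\bigl(a^{-1}(\delta)\bigr)=\widehat\kappa(0,1)\,\delta^{-\a\rho}\bigl(\tilde\ell(\delta)\bigr)^{-\rho}\ell\!\bigl(a^{-1}(\delta)\bigr),\]
and the factor after $\delta^{-\a\rho}$ is slowly varying at $0$ by standard closure properties of slowly varying functions under powers and composition with regularly varying functions. Hence $\kappa(0,1/\ep)\in\RV_{-\a\rho}$, as required.

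Finally, the degenerate case when $X$ does not hit $(0,\infty)$ immediately can be disposed of separately: then $\X$ is decreasing, so $\rho=0$, and the bivariate ladder $(L^{-1},H)$ is a killed compound Poisson process, so $\kappa$ is bounded with a positive limit at infinity in each coordinate; both $\kappa(1/\ep,0)$ and $\kappa(0,1/\ep)$ then trivially lie in $\RV_0$, in agreement with indices $-\rho=-\a\rho=0$. The only mildly technical point in the whole argument is the inversion step in the second claim, and that is a routine application of the composition/inversion theorems for regularly varying functions from~\cite{bingham_regular}.
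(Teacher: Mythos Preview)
Your argument is correct. The first claim and the degenerate case are handled exactly as (implicitly) in the paper. For the second claim you take a slightly longer route than the paper does: you establish $\kappa(0,1/a_\ep)\sim\widehat\kappa(0,1)\,\kappa(1/\ep,0)$ and then pass through the asymptotic inverse of $a_\ep$ and composition of regularly varying functions. The paper instead divides two instances of the convergence in Proposition~\ref{prop:loctime} (with $a=0$ and the spatial argument equal to $1/b$ and $1$, respectively) to obtain directly
\[
\frac{\kappa(0,1/(b\,a_\ep))}{\kappa(0,1/a_\ep)}\longrightarrow\frac{\widehat\kappa(0,1/b)}{\widehat\kappa(0,1)}=b^{-\a\rho},
\]
which is the regular variation ratio itself; no inversion step is needed. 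Your detour through $a^{-1}$ is harmless but unnecessary, and both approaches implicitly rely on the fact that $a_\ep$ may be taken continuous (or that $\kappa$ is monotone) so that convergence along $\{a_\ep\}$ yields regular variation in the full variable.
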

	\begin{proof}
		We only show the second claim. For any $b>0$
		\[\frac{\kappa(0,1/(ba_\ep))}{\kappa(0,1/a_\ep)}\to \frac{\widehat{\kappa}(0,1/b)}{\widehat{\kappa}(0,1)}=b^{-\a\rho},\]
		which is also true for $\rho=0$.
	\end{proof}
	
	\begin{corollary}\label{cor:creeping}
		If $X\in\mathcal D_{\a,\rho}$ creeps upwards then necessarily $\a\rho=1$, i.e., $\X$ is spectrally negative.
	\end{corollary}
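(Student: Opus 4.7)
The plan is to combine the standard characterization of upward creeping via the drift of the ascending ladder height subordinator with the regular variation statement in Corollary~\ref{cor:kappa}. Recall that $X$ creeps upward if and only if the drift $\g'_H$ of $H$ is strictly positive, see \cite[Thm.~VI.19]{bertoin}. In particular, if $\g'_H>0$ then $H$ is not a killed compound Poisson process, so the ascending ladder process is non-trivial and $0$ is regular for $(0,\infty)$; equivalently, $X$ hits $(0,\infty)$ immediately. This is precisely the hypothesis needed to apply Proposition~\ref{prop:loctime} and hence Corollary~\ref{cor:kappa}.

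The next step is to recover the drift from the Laplace exponent $\kappa(0,\cdot)$ of $H$ through the standard relation
\[\g'_H=\lim_{q\to\infty}\frac{\kappa(0,q)}{q}=\lim_{\ep\downarrow 0}\ep\,\kappa(0,1/\ep),\]
which under the creeping assumption is a positive finite constant. Corollary~\ref{cor:kappa}, however, asserts that $\kappa(0,1/\ep)\in\RV_{-\a\rho}$ as $\ep\downarrow 0$, and thus $\ep\,\kappa(0,1/\ep)\in\RV_{1-\a\rho}$. Since a regularly varying function at $0$ with non-zero index necessarily tends to $0$ or to $\infty$, the only possibility compatible with $\g'_H\in(0,\infty)$ is $\a\rho=1$. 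The equivalence between $\a\rho=1$ and spectral negativity for a self-similar \levy process, recalled just before the corollary statement, then finishes the argument.

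The main obstacle is really just paying attention to the preliminary step of ruling out the trivial ladder process: once creeping is shown to force $X$ to hit $(0,\infty)$ immediately, the rest of the proof is an elementary manipulation of the Laplace exponent together with the basic fact that regular variation at $0$ with non-zero index forbids positive finite limits. No quantitative estimate is required.
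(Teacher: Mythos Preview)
Your proof is correct and follows essentially the same route as the paper: both use \cite[Thm.~VI.19(ii)]{bertoin} to translate creeping into $\gamma'_H>0$, observe that this forces $\kappa(0,1/\ep)\sim\gamma'_H\ep^{-1}$ (equivalently, $\ep\,\kappa(0,1/\ep)\to\gamma'_H\in(0,\infty)$), and confront this with the regular variation index $-\a\rho$ from Corollary~\ref{cor:kappa}. Your extra care in verifying that $X$ hits $(0,\infty)$ immediately---so that Proposition~\ref{prop:loctime} and hence Corollary~\ref{cor:kappa} apply---is a sensible addition that the paper leaves implicit.
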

	\begin{proof}
		According to~\cite[Thm.\ VI.19(ii)]{bertoin}, $X$ creeps upward if and only if $\gamma'_{ H}>0$, but then 
		$\kappa(0,1/\ep)\sim \gamma'_{ H}\ep^{-1}$ which is $\RV_{-1}$. Corollary~\ref{cor:kappa} shows that necessarily $\a\rho=1$.
		
		It is noted that one can give an alternative proof based on the characterization of creeping processes in terms of their L\'evy triplets, see~\cite{vigon}, or~\cite[Sec.\ 6.4]{doney_fluctuations}, but such a proof is much longer.
	\end{proof}
	Similarly, $L^{-1}$ has a positive drift component if and only if $X$ does not hit $(-\infty,0)$ immediately, in which case $\rho=1$ implying that $\X$ is a subordinator. This is equivalent to $X$ not hitting $(-\infty,0)$ immediately, which can also be derived more directly.
	
	\begin{corollary}\label{cor:n_rv}
		Assume that $X\in\mathcal D_{\a,\rho}$ is not a decreasing process. Then
		\begin{align*}
		\quad \overline\Pi_{ L^{-1}}(\ep)\in\RV_{-\rho}, 
		\end{align*}
		except when $X$ does not hit $(-\infty,0)$ immediately ($\rho=1$).
		Moreover,
		\begin{align*}
		\quad \overline\Pi_{H}(\ep)\in\RV_{-\alpha\rho}, 
		\end{align*}
		except when $X$ creeps upward ($\a\rho=1$).
	\end{corollary}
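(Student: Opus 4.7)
The strategy is a Tauberian argument: Corollary~\ref{cor:kappa} tells us that the Laplace exponents of the (possibly killed) subordinators $L^{-1}$ and $H$ are regularly varying at infinity with indices $\rho$ and $\a\rho$, and we want to translate this into regular variation of the tails $\overline\Pi_{L^{-1}}$ and $\overline\Pi_H$ at~$0$.

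Working first with $L^{-1}$, integration by parts in the subordinator L\'evy--Khintchine formula yields
\[\kappa(a,0)=q_{L^{-1}}+\gamma'_{L^{-1}}a+a\int_0^\infty e^{-ax}\overline\Pi_{L^{-1}}(x)\,\D x.\]
The paragraph immediately preceding Corollary~\ref{cor:n_rv} records that $\gamma'_{L^{-1}}>0$ iff $X$ does not hit $(-\infty,0)$ immediately iff $\rho=1$, which is the excluded case. Outside this case the drift term vanishes and
\[\int_0^\infty e^{-ax}\overline\Pi_{L^{-1}}(x)\,\D x=\bigl(\kappa(a,0)-q_{L^{-1}}\bigr)/a,\]
so by Corollary~\ref{cor:kappa} the left-hand side is regularly varying at infinity with index $\rho-1$. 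Since $\overline\Pi_{L^{-1}}$ is nonincreasing, Karamata's Tauberian theorem (see e.g.\ \cite{bingham_regular}, or directly \cite[Prop.\ III.1]{bertoin}) delivers $\overline\Pi_{L^{-1}}(\ep)\in\RV_{-\rho}$, with the sharper equivalence $\overline\Pi_{L^{-1}}(\ep)\sim\kappa(1/\ep,0)/\Gamma(1-\rho)$ when $\rho\in(0,1)$. The argument for $H$ proceeds word-for-word in the same way, with Corollary~\ref{cor:creeping} playing the role of the drift-vanishing statement ($\gamma'_H>0$ iff $\a\rho=1$), yielding $\overline\Pi_H(\ep)\in\RV_{-\a\rho}$ whenever $\a\rho<1$.

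The delicate point I expect to work hardest at is the boundary $\rho=0$ (respectively $\a\rho=0$), which lies inside the asserted range. Karamata degenerates there but is replaceable by a direct argument: if $X$ does not hit $(0,\infty)$ immediately, then $L^{-1}$ is compound Poisson and $\overline\Pi_{L^{-1}}$ is bounded, hence trivially $\RV_0$; otherwise one substitutes $u=ax$ in the Laplace integral and invokes dominated convergence to obtain $\kappa(a,0)\sim\overline\Pi_{L^{-1}}(1/a)$, again implying $\overline\Pi_{L^{-1}}\in\RV_0$. The genuinely excluded indices $\rho=1$ and $\a\rho=1$ resist this approach precisely because there the linear drift contribution $\gamma'_\cdot a$ dominates the Laplace exponent and erases any information about the L\'evy tail.
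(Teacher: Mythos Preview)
Your Tauberian argument for the non-boundary indices matches the paper's proof. The gap is in your reading of the exclusions. The corollary does \emph{not} exclude the index values $\rho=1$ and $\a\rho=1$; it excludes the narrower events ``$X$ does not hit $(-\infty,0)$ immediately'' and ``$X$ creeps upward'', which merely \emph{imply} $\rho=1$ and $\a\rho=1$ respectively. The converse implications are false: an ub.v.\ process attracted to a positive linear drift (take $\a=1$ with dominant small negative jumps, cf.\ Remark~\ref{rem:nef_drift}) has $\rho=1$ yet hits $(-\infty,0)$ immediately, so $\gamma'_{L^{-1}}=0$; likewise a process attracted to a spectrally-negative stable need not itself creep. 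In these cases the drift term you invoke is absent, the Laplace exponent does still contain the tail information, and the corollary asserts $\overline\Pi_{L^{-1}}\in\RV_{-1}$ (resp.\ $\overline\Pi_H\in\RV_{-1}$), which your proof does not establish.

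The Tauberian route genuinely fails at index~$1$: from $\kappa(a,0)\in\RV_1$ one gets that the cumulative $\int_0^x\overline\Pi_{L^{-1}}(y)\,\D y$ is slowly varying at~$0$, but the monotone density theorem then only yields $x\,\overline\Pi_{L^{-1}}(x)=\oh\bigl(\int_0^x\overline\Pi_{L^{-1}}(y)\,\D y\bigr)$, not regular variation of $\overline\Pi_{L^{-1}}$. The paper supplies a separate argument here: by Proposition~\ref{prop:loctime} the subordinator $L^{-1}$ has a zooming-in limit $\widehat L^{-1}$, which for $\rho=1$ is a pure drift; since the excluded case is precisely $\gamma'_{L^{-1}}>0$, we are left with a driftless b.v.\ subordinator attracted to a linear drift, and Lemma~\ref{lem:index1} (via the domain-of-attraction characterization in~\cite[Thm.\ 2(iii)]{ivanovs_zooming}) then forces $\overline\Pi_{L^{-1}}\in\RV_{-1}$. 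The same device handles $\a\rho=1$ for~$H$.
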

	\begin{proof}
		Corollary~\ref{cor:kappa} tells us that 
		\[\kappa(1/\ep,0)-q_{ L^{-1}}=\int_0^\infty(1-e^{-x/\ep})\Pi_{ L^{-1}}(\D x)=\ep^{-1}\int_0^\infty e^{-x/\ep}\overline\Pi_{ L^{-1}}(x)\D x\]
		is $\RV_{-\rho}$. Tauberian theorem~\cite[Thm.\ 1.7.1, Thm.\ 1.7.2]{bingham_regular} implies that $\overline\Pi_{ L^{-1}}\in\RV_{-\rho}$ when $\rho\neq 1$.
		The case $\rho=1$ follows from Proposition~\ref{prop:loctime} and the specification of the domains of attraction in~\cite[Thm.\ 2(iii)]{ivanovs_zooming} together with Lemma~\ref{lem:index1}; recall we have excluded the case where $L^{-1}$ has a linear drift. The same arguments can be used to prove the second statement.
	\end{proof}
	
	%

	Next, we discuss the behavior of the entrance law at small times, which will come in use in Section~\ref{sec:detection_error}. 
	\begin{proposition}\label{prop:bound_excursion}
		Assume that $X\in\mathcal D_{\a,\rho}$ is not monotone and it hits $(-\infty,0)$ immediately.
		Then \[n(\ep<\zeta)\in\RV_{-\rho}\] and for any fixed $\delta>0$ there exist $f,g\in\RV_\rho$ such that
		\[f(\ep)\leq \underline n(X_\ep>\delta)\leq g(\ep)\]
		where the lower bound additionally requires that $X$ has large enough positive jumps. 
	\end{proposition}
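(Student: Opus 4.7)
The first claim follows immediately from the excursion-theoretic identity
\[
n(\ep<\zeta)=q_{L^{-1}}+\overline{\Pi}_{L^{-1}}(\ep),
\]
since $q_{L^{-1}}=n(\zeta=\infty)$ and $\Pi_{L^{-1}}(\D t)=n(\zeta\in\D t,\,\zeta<\infty)$. By Corollary~\ref{cor:n_rv}, $\overline{\Pi}_{L^{-1}}(\ep)\in\RV_{-\rho}$ and diverges as $\ep\downarrow 0$---indeed we have $\rho\in(0,1)$ since $X$ hits $(-\infty,0)$ immediately (so $\rho<1$) and $X$ is not monotone (so $\rho>0$)---so the killing constant $q_{L^{-1}}$ is asymptotically negligible.

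For the upper bound in part~2, the natural approach is to pass to the self-similar limit via Proposition~\ref{prop:loctime} applied to $-X$, which rescales $\underline n$ to $\underline{\widehat n}$. Self-similarity of $\widehat X$ with Hurst index $1/\alpha$ together with the $(1-\rho)$-self-similarity of the descending local time yields the scaling identity
\[
\underline{\widehat n}(\widehat X_\ep>\delta)=\ep^{\rho-1}\,\underline{\widehat n}(\widehat X_1>\delta\ep^{-1/\alpha}),
\]
reducing the task to the tail estimate $\underline{\widehat n}(\widehat X_1>y)=O(y^{-\alpha})$ as $y\to\infty$. This in turn can be obtained by dominating the entrance-law tail by the supremum-tail $\underline{\widehat n}(\sup_{s\leq 1}\widehat X_s>y)$ and applying standard comparisons with the ascending-ladder-height L\'evy measure $\overline{\Pi}_{\widehat H}\in\RV_{-\alpha\rho}$ (Corollary~\ref{cor:n_rv}). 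Transferring the bound back to $X$ then requires a uniform-in-rescaling tail control for the excursion entrance law, since for fixed $\delta$ the argument $\delta/a_\ep\to\infty$ falls outside the pointwise weak-convergence regime of the excursion measures; this uniform control is the main technical obstacle.

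For the lower bound, assume $\overline{\Pi}_+(\delta')>0$ for some $\delta'>2\delta$. Decomposing each excursion from the infimum at its first positive jump of size at least~$\delta'$, the compensation formula for the jumps of $X$ along an excursion shows that the $\underline n$-mass of excursions of length exceeding $\ep$ that carry such a jump within $(0,\ep/2)$ is bounded below by a constant multiple of
\[
\underline n(\zeta>\ep/2)\cdot(\ep/2)\,\overline{\Pi}_+(\delta'),
\]
which lies in $\RV_\rho$ because $\underline n(\zeta>\ep/2)\in\RV_{-(1-\rho)}$ by the symmetric version of part~1 applied to $-X$. Conditionally on such a jump occurring at time $s\in(0,\ep/2)$, the excursion value is at least $\delta'$ right after the jump; the strong Markov property of $X$ combined with the short-time estimate $\p(\sup_{r\leq \ep/2}|X_r|<\delta'-\delta)\to 1$ as $\ep\downarrow 0$ then guarantees both that the excursion does not return to the infimum before time $\ep$ and that $X_\ep>\delta$, with probability bounded away from zero. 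This gives the matching $\RV_\rho$ lower bound.
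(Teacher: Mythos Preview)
Your argument for $n(\ep<\zeta)\in\RV_{-\rho}$ is correct and is exactly the paper's.

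For the upper bound on $\underline n(X_\ep>\delta)$ there is a genuine gap, which you yourself identify but do not close: passing to the self-similar limit via weak convergence of excursion measures gives no control at the moving argument $\delta/a_\ep\to\infty$, and the ``uniform-in-rescaling tail control'' you say is needed is not supplied. It is not clear this route can be completed without substantial extra machinery. The paper sidesteps the issue entirely by an argument that never refers to the limit process. Using the Markov property at a time $s\in(0,\ep)$,
\[
\underline n(X_\ep>\delta)\leq \underline n(X_s>\delta/2)+\underline n(s<\zeta)\,\p(\overline X_\ep>\delta/2),
\]
and then substituting this into Chaumont's formula $\p(\overline X_\ep>\delta/2)=\int_0^\ep \underline n(X_s>\delta/2)\,n(\ep-s<\zeta)\,\D s$ (together with the identity $\int_0^\ep \underline n(s<\zeta)n(\ep-s<\zeta)\,\D s=1$) yields
\[
\underline n(X_\ep>\delta)\int_0^\ep n(s<\zeta)\,\D s\leq 2\,\p(\overline X_\ep>\delta/2).
\]
Part~1 gives $\int_0^\ep n(s<\zeta)\,\D s\in\RV_{1-\rho}$, and Doob--Kolmogorov gives $\p(\overline X_\ep>\delta/2)=O(\ep)$, so the right-hand side divided by the integral is in $\RV_\rho$.

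Your lower-bound strategy (force a large positive jump early in the excursion, then use short-time control to keep the path above $\delta$) is different from the paper's but is in principle workable. The paper instead reuses the Chaumont-formula trick in reverse, obtaining
\[
\p(\overline X_\ep>2\delta)\leq \underline n(X_\ep>\delta)\int_0^\ep n(s<\zeta)\,\D s\,\big/\,\p(\underline X_\ep>-\delta),
\]
and then invoking $\p(\overline X_\ep>2\delta)\geq\p(X_\ep>2\delta)\sim\ep\,\overline\Pi_+(2\delta)$. Both approaches need positive jumps exceeding a multiple of $\delta$; the paper's is shorter because it recycles the same inequality. One caution on your version: you apply part~1 to $-X$ to get $\underline n(\zeta>\ep/2)\in\RV_{-(1-\rho)}$, but the hypotheses only assert that $X$ hits $(-\infty,0)$ immediately, not that $X$ hits $(0,\infty)$ immediately, so this step needs an extra word of justification (or you should replace $\underline n(\zeta>\ep/2)\cdot\ep/2$ by $\int_0^{\ep/2}\underline n(s<\zeta)\,\D s$ and argue directly).
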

	\begin{proof}
		It is known that $n(\ep<\zeta)=q_{L^{-1}}+\overline \Pi_{L^{-1}}(\ep)$, where $\zeta$ is the lifetime of the generic excursion. Thus the first result follows immediately from Corollary~\ref{cor:n_rv}.
		The bounds on $\underline n(X_\ep>\delta)$ are derived in Appendix~\ref{app:s_ladder}.  
	\end{proof}	
	
	Finally, we  remark that one can proceed further from here to study, e.g.,  convergence of It\^o excursions and meanders among other things. It is noted that in~\cite{doney_rivero}  the authors considered a L\'evy process with a zooming-out limit (classical regime) and proved various local limit results.

	\subsection{On the left tail of supremum}
	First, we consider the supremum over infinite time horizon $\overline X_\infty$, which in the case of a killed process corresponds to the time horizon which is independent and exponentially distributed. We introduce the potential measure of the ladder height process: 
	\[U_H[0,x]=\e\int_0^\infty \ind{H_t\leq x}\D t,\]
	where by convention the indicator is 0 when $t$ exceeds the killing time of~$H$.
	\begin{proposition}\label{prop:sup_tail}
		If $X\in\mathcal D_{\a,\rho}$ then $U_H[0,x]\in\RV_{\a\rho}$, where $X$ may be a killed process. If $X$ is ether killed or drifts to~$-\infty$ then 
		\[\p(\overline X_\infty\leq x)\in \RV_{\a\rho}\qquad\text{ as }x\downarrow 0.\] 
	\end{proposition}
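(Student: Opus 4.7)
The plan is to reduce the regular variation of $U_H[0,x]$ to the regular variation of the bivariate ladder exponent obtained in Corollary~\ref{cor:kappa}, using the Laplace identity for the potential of a subordinator together with a Tauberian theorem, and then pass to $\p(\overline X_\infty\leq x)$ via the Wiener-Hopf factorization.

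\textbf{Step 1: Laplace identity.} Since $H$ is a (possibly killed) subordinator,
\[\int_{[0,\infty)}e^{-\lambda x}U_H(\D x)=\e\int_0^\infty e^{-\lambda H_t}\D t=\frac{1}{\kappa(0,\lambda)}, \qquad \lambda>0,\]
with the convention that the integral is taken up to the killing time of~$H$. This is the standard potential-measure formula.

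\textbf{Step 2: Regular variation of $\kappa(0,\cdot)$.} Corollary~\ref{cor:kappa} gives $\kappa(0,1/\ep)\in\RV_{-\a\rho}$ as $\ep\downarrow 0$, i.e.\ $\kappa(0,\lambda)$ is regularly varying at $\infty$ with index $\a\rho$. Hence $1/\kappa(0,\lambda)\in\RV_{-\a\rho}$ at $\infty$. Applying Karamata's Tauberian theorem \cite[Thm.\ 1.7.1]{bingham_regular} (in its form for behavior at $0$/$\infty$) to the non-decreasing function $U_H[0,\cdot]$ yields
\[U_H[0,x]\in\RV_{\a\rho}\qquad \text{as }x\downarrow 0,\]
which is the first claim; the statement covers also the degenerate case $\a\rho=0$, where the conclusion is slow variation.

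\textbf{Step 3: From $U_H$ to $\overline X_\infty$.} When $X$ is killed or drifts to~$-\infty$, the ladder process is killed at some positive rate $q=\kappa(0,0)>0$ (since in both cases the ultimate supremum of $X$ is attained in finite local time). The Wiener-Hopf factorization then gives
\[\e e^{-\lambda \overline X_\infty}=\frac{\kappa(0,0)}{\kappa(0,\lambda)}=\kappa(0,0)\int_{[0,\infty)}e^{-\lambda x}U_H(\D x),\]
so inverting as measures yields $\p(\overline X_\infty\leq x)=\kappa(0,0)\,U_H[0,x]$. Multiplying the result of Step~2 by the positive constant $\kappa(0,0)$ gives $\p(\overline X_\infty\leq x)\in\RV_{\a\rho}$ as $x\downarrow 0$.

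The substantive ingredient is Corollary~\ref{cor:kappa}; the rest is a standard Tauberian and Wiener-Hopf exercise, and I do not expect any real obstacle beyond checking that $\kappa(0,0)>0$ under the stated hypothesis so that the identity $\p(\overline X_\infty\leq x)=\kappa(0,0)U_H[0,x]$ is genuinely informative (otherwise $\overline X_\infty=\infty$ a.s.\ and the left tail is not defined).
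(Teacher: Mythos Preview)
Your proposal is correct and follows essentially the same route as the paper: both use the Laplace identity $\int e^{-\lambda x}U_H(\D x)=1/\kappa(0,\lambda)$, invoke Corollary~\ref{cor:kappa} for the regular variation of $\kappa(0,\cdot)$ at infinity, apply the Tauberian theorem~\cite[Thm.\ 1.7.1]{bingham_regular}, and then pass to $\p(\overline X_\infty\leq x)$ via the identity $\p(H_\infty\leq x)=q_H\,U_H[0,x]$ (your Wiener--Hopf formulation is the same statement). The only cosmetic addition in the paper is the remark that killing $X$ merely adds a constant to $\kappa(0,\cdot)$ and hence does not affect its regular variation at infinity.
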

	\begin{proof}
		From Corollary~\ref{eq:kappa} we know that the Laplace exponent of the ladder height process is regularly varying at~$\infty$ with index $\a\rho$, killing adds a constant to it and so it is still $\RV_{\a\rho}$.
		Note that the potential measure $U_H(\D x)$ satisfies $\int_0^\infty e^{-\theta x}U_H(\D x)=1/\kappa(0,\theta)$, and hence by Tauberian theorem~\cite[Thm.\ 1.7.1]{bingham_regular} $U_H[0,x]\in\RV_{\a\rho}$ and the first statement is proven. But then
		\[\p(H_\infty\leq x)=q_{ H}U_H[0,x]\in\RV_{\a\rho},\]
		where $q_{ H}>0$ is the respective killing rate. The second result now follows.
	\end{proof}
	In the case of a deterministic time interval $[0,1]$ we only  have the sandwich bounds:
	\begin{proposition}\label{prop:sup_tail_1}
		Assume that $X\in\mathcal D_{\a,\rho}$. If $X$ is not a subordinator then there exist  $f,g\in\RV_{\a\rho}$ such that 
		\[f(x)\leq \p(\overline X_1\leq x)\leq g(x),\] and otherwise $\p(\overline X_1\leq x)$ decreases faster than any power as $x\downarrow 0$.
	\end{proposition}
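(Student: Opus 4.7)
The plan is to derive upper and lower $\RV_{\a\rho}$ bounds for $\p(\overline X_1\le x)$ as $x\downarrow 0$ by comparing with the infinite-horizon supremum, which is already controlled by Proposition~\ref{prop:sup_tail}; the subordinator case will be handled by a Chernoff bound. For the upper bound, fix any $q>0$ and let $X^q$ denote $X$ killed at independent rate~$q$. Killing does not alter the zooming-in limit, so $X^q\in\mathcal D_{\a,\rho}$, and since $X^q$ is killed, Proposition~\ref{prop:sup_tail} gives $\p(\overline X_{e_q}\le x)=\p(\overline{X^q}_\infty\le x)\in\RV_{\a\rho}$, where $e_q\sim\mathrm{Exp}(q)$. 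Using that $t\mapsto\p(\overline X_t\le x)$ is non-increasing,
\[\p(\overline X_{e_q}\le x)=\int_0^\infty qe^{-qt}\p(\overline X_t\le x)\,dt\ge(1-e^{-q})\p(\overline X_1\le x),\]
so $g(x):=(1-e^{-q})^{-1}\p(\overline X_{e_q}\le x)\in\RV_{\a\rho}$ is the required upper bound.

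For the lower bound, I would split into cases. If $X$ drifts to $-\infty$ or is killed, Proposition~\ref{prop:sup_tail} directly yields $f(x):=\p(\overline X_\infty\le x)\in\RV_{\a\rho}$ with the trivial inclusion $\p(\overline X_1\le x)\ge\p(\overline X_\infty\le x)$. In the remaining situation ($X$ non-subordinator, not killed, not drifting to $-\infty$), reduce to the previous case via an Esscher-type change of measure: pick $\theta_0\ne 0$ in the interior of $\{\theta\in\R:\psi(\theta)<\infty\}$ such that the tilted drift $\psi'(\theta_0)$ is negative, so that $X$ drifts to $-\infty$ under the tilted law $\mathbb Q$. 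By the measure-change lemma in the preliminaries, $X\in\mathcal D_{\a,\rho}$ also under $\mathbb Q$, and the previous case applied under $\mathbb Q$ gives $\mathbb Q(\overline X_1\le x)\in\RV_{\a\rho}$ from below. Transferring back via $d\p/d\mathbb Q=e^{-\theta_0 X_1+\psi(\theta_0)}$, whose value is bounded below by $e^{-|\theta_0|K+\psi(\theta_0)}$ on $\{X_1\ge-K\}$, yields
\[\p(\overline X_1\le x)\ge e^{-|\theta_0|K+\psi(\theta_0)}\,\mathbb Q(\overline X_1\le x,X_1\ge -K);\]
a tightness argument then shows that $\mathbb Q(X_1\ge -K\mid\overline X_1\le x)$ stays bounded away from zero uniformly in small $x$ once $K$ is taken large enough, so $\mathbb Q(\overline X_1\le x,X_1\ge -K)$ is comparable to $\mathbb Q(\overline X_1\le x)$ up to a constant.

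For the subordinator case, $\overline X_1=X_1$ and $X\in\mathcal D_{\a,1}$ with $\a\in(0,1]$ forces the Laplace exponent $\phi(\lambda):=-\log\e e^{-\lambda X_1}$ to be regularly varying of positive index $\a$ at infinity. The Chernoff bound $\p(X_1\le x)\le e^{\lambda x-\phi(\lambda)}$ with $\lambda=1/x$ then produces decay of order $\exp(1-\phi(1/x))$, which vanishes faster than any polynomial since $\phi(1/x)/\log(1/x)\to\infty$. The main obstacle in the whole argument is the Esscher step in the non-subordinator lower bound: the required $\theta_0\ne 0$ with $\psi(\theta_0)<\infty$ and $\psi'(\theta_0)<0$ need not exist for heavy-tailed $X$ (e.g.\ a symmetric $\a$-stable process has $\psi(\theta)=\infty$ for every $\theta\ne 0$), and the technical workaround---truncating the large jumps of $X$ before tilting and then controlling the effect of reinstating them via a coupling, using that truncation does not affect the zooming-in limit---is where most of the care is needed.
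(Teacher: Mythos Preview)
Your upper bound coincides with the paper's (restricting the integral $\int_0^\infty e^{-qt}\p(\overline X_t\le x)\,dt$ to $[0,1]$), and your Chernoff treatment of the subordinator case is a minor variant of the paper's discretization bound $\p(X_1\le x)\le (1-\p(X_{1/n}>x))^n\to e^{-\overline\Pi_+(x)}$; both ultimately rest on $\overline\Pi_+\in\RV_{-\a}$.

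For the lower bound your route is genuinely different. The paper does not use Esscher tilting: it invokes the pointwise inequality $\p(\overline X_t\le x)\ge c_t\,U_H[0,x]$ from \cite{suprema_bounds} (valid when $\int_0\kappa(1/s,0)\,ds<\infty$, which here follows from $\kappa(1/\cdot,0)\in\RV_{-\rho}$ and $\rho<1$), combined with a three-case analysis---according to whether $\Pi(\R_-)>0$ and whether $\rho=1$---that uses jump removal and direct zooming-in to land in a situation where this inequality applies for some fixed~$t$. Your truncate-then-tilt scheme is more uniform and avoids the external reference, which is appealing.

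The real difficulty, however, is the step you pass over as ``a tightness argument'': that $\mathbb Q(X_1\ge -K\mid \overline X_1\le x)$ stays bounded below uniformly in small~$x$. This is not automatic. The conditioning event has $\mathbb Q$-probability of order $\RV_{\a\rho}$, so the crude bound $\mathbb Q(X_1<-K\mid\overline X_1\le x)\le\mathbb Q(X_1<-K)/\mathbb Q(\overline X_1\le x)$ explodes as $x\downarrow 0$. One can close the gap---for instance, split at time $1/2$ to obtain $\mathbb Q(\overline X_1\le x,\,X_1<-K)\le\mathbb Q(\overline X_{1/2}\le x)\cdot\mathbb Q(X_{1/2}<-K+1)$, and then argue that $\mathbb Q(\overline X_{1/2}\le x)$ and $\mathbb Q(\overline X_1\le x)$ are comparable by showing that the upper and lower $\RV_{\a\rho}$ envelopes $\mathbb Q(\overline X_{e_q}\le x)$ and $\mathbb Q(\overline X_\infty\le x)$ are in fact asymptotically proportional (their Laplace transforms are $1/\kappa(q,\theta)$ and $1/\kappa(0,\theta)$, whose ratio tends to~$1$ as $\theta\to\infty$). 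But this is substantially more than a one-liner and is where the actual work in your lower bound lies. Your handling of the large-jump truncation (restricting to the event of no big positive jump at multiplicative cost $e^{-\lambda_+}$) is correct.
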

	\begin{proof}
		For the upper bound, observe that 
		\[\p(\overline X_{e_1}\leq x)=\int_0^\infty e^{-t}\p(\overline X_t\leq x)\D t\geq  \int_0^1 e^{-t}\p(\overline X_1\leq x)\D t=c\p(\overline X_1\leq x),\]
		but the left hand side is $\RV_{\a\rho}$ by Proposition~\ref{prop:sup_tail}.
		The lower bound is based on the bounds from~\cite{suprema_bounds} 
		and requires at present a number of tricks. We, therefore, present the proof of the lower bound in Appendix~\ref{app:s_ladder}. Note that taking a subordinator with a positive drift gives an example with $\p(\overline X_1\leq x)=0$ for small enough $x$ which is not regularly varying; other subordinators are discussed in Appendix~\ref{app:s_ladder}.
	\end{proof}
	It would be interesting to understand if  $\p(\overline X_1\leq \cdot)\in\RV_{\a\rho}$ as is the case for a strictly stable process. For the latter process, even the density of $\overline X_1$ is regularly varying at $0$, see~\cite{doney_savov}.
	The time of supremum $\tau$ on the interval $[0,1]$ has indeed regularly varying tails:
	\begin{proposition}\label{prop:tau_tail}
		Assume that $X\in\mathcal D_{\a,\rho}$ is not monotone. Then for the time interval $[0,1]$ we have
		\[\p(\tau\leq \ep)\in\RV_{\rho},\qquad\p(\tau\geq 1-\ep)\in\RV_{1-\rho}.\]
	\end{proposition}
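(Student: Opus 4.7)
The plan is to reduce the second claim to the first via time-reversal duality, and to prove $\p(\tau \le \ep)\in\RV_\rho$ through an overshoot identity for the inverse local time combined with Corollary~\ref{cor:kappa}.

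\emph{Reduction.} The time-reversed path $\tilde X_s := X_1 - X_{(1-s)-}$ on $[0,1]$ has the same law as $X$, and its supremum $X_1 - \underline X_1$ is attained at time $1 - \tau(-X)$, where $\tau(-X)$ denotes the time of the supremum of $-X$ on $[0,1]$. Hence $\tau(X) \eqd 1 - \tau(-X)$, so $\p_X(\tau \ge 1-\ep) = \p_{-X}(\tau \le \ep)$. Since $-X \in \mathcal D_{\a, 1-\rho}$, applying the first claim to $-X$ yields the second.

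\emph{Overshoot identity.} For the first claim, I would use that $\tau$ equals the left endpoint of the last excursion from the supremum straddling time $1$, i.e.\ $\tau = L^{-1}_{L_1-}$. Equivalently $\{\tau \le \ep\} = \{L^{-1}_{L_\ep} > 1\}$: the inverse local time (a possibly killed subordinator with drift $\gamma'_{L^{-1}}$, L\'evy measure $\Pi_{L^{-1}}$, and killing rate $q_{L^{-1}}$) overshoots level $\ep$ to a value exceeding $1$. The standard first-passage formula for subordinators (compensation over the excursion Poisson point process) gives
\begin{equation*}
\p(\tau \le \ep) \;=\; \int_{[0,\ep]} U_{L^{-1}}(\D y)\,\bigl[q_{L^{-1}} + \overline\Pi_{L^{-1}}(1-y)\bigr] \;=\; \int_{[0,\ep]} U_{L^{-1}}(\D y)\,n(\zeta>1-y),
\end{equation*}
where $n(\zeta > r) = q_{L^{-1}} + \overline\Pi_{L^{-1}}(r)$ and $n(\zeta>1)>0$ under the non-monotonicity assumption.

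\emph{Tauberian step.} Since $\overline\Pi_{L^{-1}}$ is right-continuous and has at most countably many atoms, $n(\zeta>1-y)\to n(\zeta>1)$ uniformly for $y\in[0,\ep]$ as $\ep\downarrow 0$, and thus $\p(\tau\le\ep)\sim n(\zeta>1)\,U_{L^{-1}}[0,\ep]$. By Corollary~\ref{cor:kappa}, $\kappa(\theta,0)\in\RV_\rho$ at $\theta\to\infty$; since $\int_0^\infty e^{-\theta t}U_{L^{-1}}(\D t) = 1/\kappa(\theta,0)$, Karamata's Tauberian theorem yields $U_{L^{-1}}[0,\ep]\in\RV_\rho$ as $\ep\downarrow 0$, and therefore $\p(\tau\le\ep)\in\RV_\rho$, as required.

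\emph{Main obstacle.} The principal technical points to be checked are the identification $\{\tau\le\ep\}=\{L^{-1}_{L_\ep}>1\}$ and the first-passage formula above in the presence of possible creeping, a drift component of $L^{-1}$, and atoms of $\overline\Pi_{L^{-1}}$. The boundary regimes in which $0$ is irregular for one of the half-lines (so the ladder process degenerates to a compound Poisson one and $\rho\in\{0,1\}$) require a slight adaptation: in these cases a direct renewal-theoretic argument built on Proposition~\ref{prop:loctime} and Corollary~\ref{cor:n_rv} yields the same conclusion.
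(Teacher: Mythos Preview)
Your argument is correct and follows a genuinely different route from the paper's. The paper invokes Chaumont's formula
\[
\p(\tau\leq \ep)=\int_0^\ep \underline n(s<\zeta)\bigl(n(1-s<\zeta)+\gamma'_{L^{-1}}\bigr)\D s,
\]
observes that the bracketed factor tends to a positive constant as $s\downarrow 0$, and then appeals to Proposition~\ref{prop:bound_excursion} (applied to $-X$) to obtain $\underline n(s<\zeta)\in\RV_{-(1-\rho)}$; Karamata's theorem then gives $\RV_\rho$ for the integral. You instead identify $\tau$ with the undershoot $L^{-1}_{T_1-}$ of the ascending ladder time subordinator at level~$1$, apply the classical first-passage identity for subordinators to get $\p(\tau\le\ep)\sim c\,U_{L^{-1}}[0,\ep]$, and conclude via Corollary~\ref{cor:kappa} and the Tauberian theorem. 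Your approach is arguably more self-contained, bypassing both Chaumont's density result and Proposition~\ref{prop:bound_excursion}; the paper's route, on the other hand, reuses excursion-theoretic machinery already developed in the section. One small correction: as $y\downarrow 0$ the limit of $n(\zeta>1-y)$ is $n(\zeta\ge 1)=q_{L^{-1}}+\Pi_{L^{-1}}[1,\infty)$ rather than $n(\zeta>1)$ (the countability of atoms does not preclude an atom at~$1$), but this is immaterial since the limit is in any case a positive finite constant.
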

	\begin{proof}
		We only prove the first statement since the second is analogous by time reversal, say. We may assume that $X$ hits $(0,\infty)$ immediately, because otherwise the statement is immediate.
		\cite[Thm.\ 6]{chaumont_supremum} provides us with the formula
		\[\p(\tau\leq \ep)=\int_0^\ep \underline n(s<\zeta)(n(1-s<\zeta)+\gamma'_{L^{-1}})\D s.\]
		Note that the second term under the integral has a positive limit as $s\downarrow 0$, and the first term is $\RV_{-(1-\rho)}$ according to the first result of  Proposition~\ref{prop:bound_excursion}. Thus the integrand is $\RV_{-(1-\rho)}$ and hence $\p(\tau\leq \ep)\in\RV_{\rho}$.
	\end{proof}

	\section{Moments of the discretization error}\label{sec:moments}
	Consider the error $M-M^\n$ of approximating the supremum of a L\'evy process over the interval $[0,1]$ by the maximum over the uniform grid with step size $1/n$.
	Our first result provides an upper bound on the moments of this discretization error for a general process $X$, not necessarily satisfying the zooming-in assumption \eqref{eq:zooming}.
	
	\begin{theorem}\label{thm:moments_bound}
		For any $p>0$ satisfying $\int_{|x|>1}|x|^p\Pi(\D x)<\infty$ and any $\epsilon>0$ we have
		\[\e(M-M^\n)^p=\begin{cases}\Oh(n^{-p/\a+\ep}),& p\leq \a,\\
		\Oh(n^{-1}),&p>\a\end{cases}\]
		as $n\to\infty$.
		
		Moreover, the bound can be strengthened to $\Oh(n^{-p/\a})$ in the boundary cases: (i) $p\leq \a=2$ and (ii) $p\leq \a=1$ and $X$ is b.v.
	\end{theorem}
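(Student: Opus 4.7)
The starting point is the pathwise identity
\[
M - M^{(n)} \leq \eta_{i^*}, \qquad \eta_i := \oX^{(i)}_{1/n} - \bigl(X^{(i)}_{1/n}\bigr)^+,
\]
where the $\eta_i$ are i.i.d.\ copies (over the $n$ subintervals of length $1/n$) of $\eta := \oX_{1/n} - X_{1/n}^+$, and $i^* \in\{1,\ldots,n\}$ is the index of the subinterval containing $\tau$. Summing over $i$ also yields the union-bound tail estimate
\[
\p(M - M^{(n)} > y) \leq \min\{1,\, n\,\p(\eta > y)\}, \qquad y > 0,
\]
which I will use directly when $p>\alpha$.

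\textbf{Step 1: small-time moment bounds for $\oX_{1/n}$.} I would prove that
$\e \oX_{1/n}^p = \Oh\bigl(n^{-p/\alpha + \varepsilon}\bigr)$ for $p \leq \a$,
splitting into the three cases of \eqref{eq:alpha}. In the Brownian case $\sigma\neq 0$, Doob's $L^p$ maximal inequality reduces this to $\e|X_{1/n}|^p=\Oh(n^{-p/2})$, after subtracting the deterministic drift and treating the pure-jump part via $\int_{|x|>1}|x|^p\Pi(\D x)<\infty$ plus the finite variance of the truncated small-jump integral; this also yields the boundary-sharp rate $\Oh(n^{-p/2})$ without $\varepsilon$. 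In the b.v.\ drift case, $\oX_{1/n}\leq |\gamma'|/n + \int_0^{1/n}|\D J_s|$, and the moment of the total variation of the jumps is directly controlled by $\int_{|x|<1}|x|\Pi(\D x)<\infty$, giving the sharp $\Oh(n^{-p})$ for $p\leq 1=\alpha$. In the remaining case $\alpha=\beta_0$, using the definition~\eqref{eq:beta} of $\beta_0$ one has $\int_{|x|<1}|x|^{\alpha'}\Pi(\D x)<\infty$ for any $\alpha'>\beta_0$, and a Kunita/Rosenthal-type inequality for Lévy processes shows $\e\oX_{1/n}^p=\Oh(n^{-p/\alpha'})$; choosing $\alpha'=\alpha-\varepsilon'$ arbitrarily close to $\alpha$ produces the $\varepsilon$-slack. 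Trivially $\e\eta^p\leq \e\oX_{1/n}^p$.

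\textbf{Step 2: the case $p>\alpha$.} Integrate the union bound,
\[
\e(M-M^{(n)})^p = \int_0^\infty p y^{p-1}\p(M-M^{(n)}>y)\,\D y \leq y_0^p + n\int_{y_0}^\infty p y^{p-1}\,\p(\eta > y)\,\D y,
\]
and take $y_0$ of constant order. For $y\geq y_0$ the tail $\p(\oX_{1/n}>y)$ is dominated (up to additive exponentially small Gaussian terms) by $n^{-1}\oPi_+(y/2)$, so after one integration by parts the tail integral reduces to $n^{-1}\int_1^\infty|x|^p\Pi(\D x)=\Oh(1/n)$, which is finite by hypothesis. The first term $y_0^p$ can be absorbed by choosing $y_0$ to balance the two contributions, yielding the overall $\Oh(n^{-1})$.

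\textbf{Step 3: the case $p\leq \alpha$.} This is the delicate case, since the naive union bound gives only $n\,\e\eta^p=\Oh(n^{1-p/\alpha+\varepsilon})$, which is off by a factor of $n$. The point is to exploit that $M-M^{(n)}\leq \eta_{i^*}$ involves a \emph{single} interval. I would use the finer bound
\[
M - M^{(n)} \leq (X_\tau-X_{T^-}) \wedge (X_\tau-X_{T^+})
\]
where $T^-,T^+$ are the grid points bracketing $\tau$, and condition on $\tau$. By Bertoin's splitting at the supremum (recalled in the introduction), the pre-$\tau$ increment $X_\tau - X_{T^-}$ is a functional of the ladder height process $H$ of $X$ run for time of order $U/n$ with $U\in[0,1]$ uniform-like, while $X_\tau-X_{T^+}$ corresponds to the dual ladder height $\widehat H$ of $-X$ over $(1-U)/n$. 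Using the regularly-varying Laplace exponents of $H,\widehat H$ established in Corollary~\ref{cor:kappa}--Corollary~\ref{cor:n_rv}, Tauberian arguments give moments of order $(1/n)^{p/(\alpha\rho)+\varepsilon}$ and $(1/n)^{p/(\alpha(1-\rho))+\varepsilon}$ respectively; since $\min$ of the two is at most $(1/n)^{p/\alpha}$ (attained when the two exponents are equal), an integration against the arcsine-type density of $\tau$ from Proposition~\ref{prop:tau_tail} delivers the claimed $\Oh(n^{-p/\alpha+\varepsilon})$.

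\textbf{Main obstacle.} The hardest step is cleanly extracting the factor $n^{-p/\alpha}$ in Step~3: one must pass from the unconditional moment bound of Step~1 to a bound on $\e\eta_{i^*}^p$ that does not accumulate over the $n$ intervals. The Bertoin splitting, together with the ladder-regular-variation tools developed in Section~\ref{sec:ladder}, is the natural vehicle, but its implementation is technical and very likely pushed to the appendix. The $\varepsilon$-slack is unavoidable here because both the ladder tails and the small-jump moments of Step~1 are only known up to arbitrary $\varepsilon$ outside the two boundary cases, where each of these inputs becomes sharp and the overall rate $\Oh(n^{-p/\alpha})$ follows without loss.
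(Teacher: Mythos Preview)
Your Step~3 has a fundamental gap: Theorem~\ref{thm:moments_bound} does \emph{not} assume the zooming-in condition~\eqref{eq:zooming}, so none of the results from Section~\ref{sec:ladder} (Corollary~\ref{cor:kappa}, Corollary~\ref{cor:n_rv}, Proposition~\ref{prop:tau_tail}) are available. The index~$\alpha$ here is the one defined in~\eqref{eq:alpha} for an arbitrary L\'evy process, with no regular variation of ladder exponents guaranteed. Even granting the assumption, your use of Bertoin's splitting is off: the pre-$\tau$ increment $X_\tau-X_{T^-}$ is \emph{not} the ladder height process run for time $U/n$; it is an increment of the process conditioned to stay positive, whose lifetime in the Bertoin representation is $A^+_T$ and may be arbitrarily short. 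The paper explicitly warns (just after Lemma~\ref{lem:bertoin}) that one cannot bound the pre- and post-supremum pieces separately and then take a minimum; the whole point of Lemma~\ref{lem:bertoin} is that only the \emph{joint} quantity $\sup_{t\in[0,1]}\{(\overline X_T-X_{\tau-t})\wedge(\overline X_T-X_{\tau+1-t})\}$ admits stochastic domination by $\overline X_1-\underline X_1$, precisely because the two lifetimes sum to~$T$. Your Step~2 also fails as written: with $y_0$ of constant order, the term $y_0^p$ is a constant, not $\Oh(n^{-1})$.

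The paper's route is quite different and unifies both cases. One rescales to $X^{(n)}_t=b_nX_{t/n}$ with $b_n=n^{1/\alpha_+}$ (taking $\alpha_+>\alpha$ when $p\leq\alpha$, and $\alpha_+=p$ when $p>\alpha$), so that $b_n(M-M^{(n)})$ is the discretization error for $X^{(n)}$ on $[0,n]$ at integer grid points. Lemma~\ref{lem:bertoin} then gives the single stochastic domination $b_n(M-M^{(n)})\leq_{\rm st}\overline X^{(n)}_1-\underline X^{(n)}_1$, and the entire problem reduces to showing that the $p$-th moment of the \emph{range} of the rescaled process on a unit interval is bounded in~$n$. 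This is done by elementary means (Lemma~\ref{lem:UIsup}): Doob's inequality for the compensated small-jump martingale plus a direct estimate on the compound Poisson of large jumps, using only that $\gamma^{(n)},\sigma^{(n)},\int_{|x|\leq1}x^2\Pi^{(n)}(\D x),\int_1^\infty x^p\Pi^{(n)}(\D x)$ stay bounded, which is checked in Lemmas~\ref{lem:gen_conv0}--\ref{lem:gen_conv2} without any appeal to~\eqref{eq:zooming}. The $\epsilon$-slack comes solely from choosing $\alpha_+>\alpha$; in the boundary cases one can take $\alpha_+=\alpha$ directly.
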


	For $p=1$ the result in Theorem \ref{thm:moments_bound} is close to \cite[Theorem 5.2.1]{chen_thesis}; in the case of b.v.\ process $X$ our bound $\Oh(n^{-1})$ is slightly sharper.
	Importantly, the result \cite{chen_thesis} cannot be generalized in a straightforward fashion to $p\neq1$, since it crucially relies on Spitzer identity.
	Nevertheless, \cite{giles} provides some bounds for $p\neq 1$ in the particular case when $\sigma=0, \gamma'=0$ and $\Pi(\D x)$ has a density sandwitched between $c_1 |x|^{-1-\a}$ and $c_2 |x|^{-1-\a}$ for small~$|x|$. These bounds, however, have suboptimal rates (in the logarithmic sense) unless $p>2\a$ or $X$ is spectrally negative.
	
	Our main goal, however, is to provide exact moment asymptotics, which is possible under the regularity assumption~\eqref{eq:zooming}. In the following, $V^\n := b_n(M-M^\n)$, as defined in ~\eqref{eq:supremum_error} and $b_n=1/a_{1/n}$.
	
	\begin{theorem}\label{thm:UI} Assume that $X\in \mathcal D_{\a,\rho}$. Then for any $p\in(0,\alpha)$ satisfying $\int_{|x|>1}|x|^p\Pi(\D x)<\infty$ the sequence $\e\left(V^\n\right)^p$ is bounded. 
	\end{theorem}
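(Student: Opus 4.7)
Since $V^\n\ge 0$, the layer-cake formula $\e(V^\n)^p = p\int_0^\infty t^{p-1}\p(V^\n>t)\,\D t$ reduces the claim to a uniform-in-$n$ tail estimate of the form $\p(V^\n>t)\le C\min(1,t^{-q})$ for some $q>p$. For each fixed $n$, the crude domination $V^\n\le b_n M$ together with $\e M^p<\infty$ (which is guaranteed by the big-jump moment integrability hypothesis) shows the moment is finite, so the real issue is uniformity as $n\to\infty$, and in particular the decay of $\p(V^\n>t)$ as $t\to\infty$.

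The main step is to localize around the supremum time $\tau$. Let $k_n=\lfloor n\tau\rfloor$, $\ell_n=\tau-k_n/n$ and $r_n=1/n-\ell_n$. On $\{\tau\in(0,1)\}$ one has $M^\n\ge X_{k_n/n}\vee X_{(k_n+1)/n}$, so
$$V^\n\le b_n\min\bigl(\tilde X_{\ell_n},\,X^*_{r_n}\bigr),$$
where $\tilde X_s:=M-X_{(\tau-s)-}$ for $s\in[0,\tau]$ and $X^*_s:=M-X_{\tau+s}$ for $s\in[0,1-\tau]$ are the (nonnegative) reflected time-reversed pre-supremum and post-supremum processes. The splitting theorem of~\cite{bertoin_splitting} gives that, conditionally on $\tau$, the processes $\tilde X$ and $X^*$ are independent and each admits a concrete representation as a juxtaposition of It\^o excursions of $-X$ (respectively $X$) away from its running supremum, taken from a half-line. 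Independence then yields
$$\p(V^\n>t\mid\tau)\le \p\bigl(b_n\tilde X_{\ell_n}>t\mid\tau\bigr)\,\p\bigl(b_n X^*_{r_n}>t\mid\tau\bigr).$$

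The product above is the heart of the matter, and also the principal obstacle: each one-sided tail on its own can decay only as $t^{-\a}$, for instance because of a single jump of $X$ of size $\asymp 1/b_n$ landing in the grid cell containing $\tau$, which produces a comparably large excursion on one side only. Consequently neither $b_n\tilde X_{1/n}$ nor $b_n X^*_{1/n}$ is $p$-uniformly integrable for $p=\a$, and this is precisely the phenomenon alluded to in the introduction where it is noted that restricting to grid points on a single side of $\tau$ would make the scaled moments explode. Independence converts $\{V^\n>t\}$ into the intersection of two such events and multiplies the tails, pushing the effective decay past $t^{-\a}$ and producing finite $p$-th moments for every $p<\a$. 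To execute this one needs: one-sided tail bounds for $\tilde X_s$ and $X^*_s$ at scale $s\sim 1/n$, expressed through the excursion measures and the regular variation established in Corollary~\ref{cor:n_rv} and Proposition~\ref{prop:bound_excursion}; integration against the conditional joint law of $(\tau,\ell_n,r_n)$, with the boundary event $\{\tau\in\{0,1\}\}$ handled trivially (there $V^\n=0$) together with the tail bounds of Proposition~\ref{prop:tau_tail}; and care with the boundary possibility that the supremum is attained by a jump, so that $\tilde X_0$ or $X^*_0$ is already positive. Integrating $t^{p-1}\p(V^\n>t)$ then delivers the claimed uniform bound on $\e(V^\n)^p$.
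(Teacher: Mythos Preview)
The paper's proof takes a completely different and much shorter route. It realizes $V^\n$ as the unit-grid discretization error for the rescaled process $X^\n_t=b_nX_{t/n}$ on $[0,n]$ and then invokes Lemma~\ref{lem:bertoin}: in Bertoin's representation the two halves $X^\Uparrow,X^\Downarrow$ are built from independent processes $Y^\pm$ via time-changes, and as the horizon grows from $1$ to $n$ these halves only live \emph{longer}; because the bound on the error involves a pointwise minimum of the two sides (reverting to the surviving side when one has died), the quantity can only \emph{decrease}. Hence $V^\n$ is stochastically dominated by the range $\overline{X^\n_1}-\underline{X^\n_1}$ on a single unit interval, whose $p$-th moment is then controlled directly from the L\'evy triplet of $X^\n$ via Lemmas~\ref{lem:levyp_conv} and~\ref{lem:UIsup}. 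No conditional independence, no layer-cake tail estimates, and no input from Section~\ref{sec:ladder} enter the argument.

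Your program has a genuine gap. Conditional independence of the pre- and post-supremum pieces given $\tau$ is indeed true (a Denisov/Millar-type decomposition), though this is not what~\cite{bertoin_splitting} asserts --- there the halves are coupled through the common time-changes $a^\pm$, and the paper exploits exactly that coupling rather than any independence. The real hole is the uniform one-sided tail bound. You present ``each side decays only as $t^{-\a}$'' as a \emph{lower} bound on the decay, but the argument needs an \emph{upper} bound $\p(b_n\tilde X_{\ell_n}>t\mid\tau=u)\le C(u)\,t^{-\a}$ uniformly in $n$, together with integrability of the product $C(u)C'(u)$ against the law of~$\tau$. Corollary~\ref{cor:n_rv} and Proposition~\ref{prop:bound_excursion} do not provide this: they address fixed spatial levels, not the full tail of the meander at a vanishing time, and they say nothing about the conditional law given~$\tau=u$. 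More seriously, the paper explicitly records (immediately after Lemma~\ref{lem:bertoin} and again in~\S\ref{sss:big_jumps}) that the one-sided scaled moments can \emph{diverge}, even after discarding large jumps; consequently no bound with $C$ independent of $u$ exists, and you would have to show that the blow-up of $C(u)$ as $u\to 0$ or $u\to 1$ is compensated by the other factor and by $\p(\tau\in\D u)$. That analysis is entirely absent from the proposal, and it is far from clear it can be carried out --- whereas the stochastic-domination step in Lemma~\ref{lem:bertoin} makes the difficulty disappear in one line.
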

	
	For completeness let us recall from~\cite{ivanovs_zooming} that 
	\begin{equation}\label{eq:V}
	\V=\min\{-\widehat\xi_{U+\mathbb Z}\},
	\end{equation}
	where $(\widehat\xi_t,t\in\R)$ is the limit of $\X$ over $[0,T]$ as seen from its supremum point as $T\to\infty$, and $U$ is an independent uniform random variable.
	The weak convergence in \eqref{eq:supremum_error} and Theorem \ref{thm:UI} immediately yield the uniform integrability of certain powers of $V^\n$. Combining this result with the limiting expression for $\e \V^\n$ in~\cite[Prop. 2]{asmussen_ivanovs_twosided}, we obtain the following result (recall the definition of $\beta_\infty$ in~\eqref{eq:beta}).
	
	\begin{corollary}\label{cor:expected}
		Let $X\in \mathcal D_{\a,\rho}$. Then for any positive $p<\a\wedge \beta_\infty$ we have
		\[\e\left(V^\n\right)^p\to\e \V^p\,\p(\tau\notin\{0,1\})\, \in(0,\infty)\qquad\text{ as }n\to \infty.\]
		In particular, for $\alpha>1$ and $\beta_\infty>1$:
		\[\e V^\n\to \e \V=-\zeta\left(\frac{\a-1}{\a}\right)\e \X_1^+\qquad\text{ as  }n\to\infty,\]
		where $\zeta$ is the Riemann zeta function. 	
	\end{corollary}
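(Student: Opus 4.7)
\emph{Plan.} The strategy is to upgrade the conditional weak convergence \eqref{eq:supremum_error} to convergence of $p$-th moments by supplying uniform integrability from a slightly higher moment bound given by Theorem~\ref{thm:UI}. First I would observe that on $\{\tau\in\{0,1\}\}$ the supremum is attained at a grid point (either $X_0=0$ or $X_1$), so $M^\n=M$ and $V^\n=0$. This gives the clean identity
\[\e(V^\n)^p \;=\; \p(\tau\notin\{0,1\})\cdot\e\bigl[(V^\n)^p\,\big|\,\tau\in(0,1)\bigr],\]
reducing the task to showing $\e[(V^\n)^p\mid\tau\in(0,1)]\to\e\V^p$.

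For the UI step, pick any $p'\in(p,\a\wedge\beta_\infty)$, which exists by the hypothesis and satisfies $\int_{|x|>1}|x|^{p'}\Pi(\D x)<\infty$ by definition of $\beta_\infty$. Theorem~\ref{thm:UI} then yields $\sup_n\e(V^\n)^{p'}<\infty$, and the elementary tail bound $(V^\n)^p\ind{V^\n>K}\leq K^{p-p'}(V^\n)^{p'}$ produces UI of $\{(V^\n)^p\}$ under $\p$. Since $\p(\tau\in(0,1))$ is a positive constant independent of $n$, UI transfers to the conditional law. Combined with \eqref{eq:supremum_error}, standard weak-convergence-plus-UI delivers the required convergence, with $\e\V^p\in(0,\infty)$: finiteness from the UI bound, positivity since \eqref{eq:V} forces $\V>0$ with positive probability.

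For the particular case $p=1$, the assumption $\a>1$ forces $X$ to be ub.v.\ via \eqref{eq:alpha} (either $\sigma\neq 0$ or $\beta_0>1$), so $0$ is regular for both half-lines and $\tau\in(0,1)$ almost surely, removing the prefactor. The closed form $\e\V=-\zeta\bigl(\frac{\a-1}{\a}\bigr)\e\X_1^+$ is then obtained by quoting the identity in \cite[Prop.\ 2]{asmussen_ivanovs_twosided} applied to the self-similar attractor~$\X$. The only substantive ingredient is Theorem~\ref{thm:UI}; everything else, the boundary reduction, the power-raising UI argument, the regularity fact for ub.v.\ processes, and the known mean formula, is essentially routine.
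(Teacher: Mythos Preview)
Your proposal is correct and follows essentially the same route as the paper's proof: reduce to the conditional problem via $V^\n=0$ on $\{\tau\in\{0,1\}\}$, obtain uniform integrability of $(V^\n)^p$ from Theorem~\ref{thm:UI} applied with a slightly larger exponent, combine with the weak convergence~\eqref{eq:supremum_error}, and for $p=1$ invoke that $\a>1$ forces ub.v.\ hence $\p(\tau\in(0,1))=1$, then cite \cite[Prop.\ 2]{asmussen_ivanovs_twosided}. The paper is marginally more explicit on one point you leave implicit: it notes that \cite[Prop.\ 2]{asmussen_ivanovs_twosided} computes the limit of $\e\widehat V^\n$ for the discretization of~$\X$ itself, and then applies the first part of the corollary to~$\X$ (which lies in its own domain of attraction) to identify that limit with~$\e\V$.
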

	It is noted that $\e \X_1^+$ has an explicit expression, see~\cite[Thm.\ 3]{zolotarev} or~\cite{asmussen_ivanovs_twosided}. 
	\begin{proof}
		Note that if $\tau\in\{0,1\}$ with positive probability then $(V^\n\mid\tau\in\{0,1\})=0$ a.s., because of the nature of discretization.
		Now the first result follows from the weak convergence in~\eqref{eq:supremum_error} and uniform integrability of $(V^\n)^p$, where the latter is a consequence of Theorem~\ref{thm:UI} applied with a slightly larger~$p$.
		
		Next, we note that $\a\in(1,2]$ implies that $X$ is ub.v.\ process and, in particular, $\p(\tau\in(0,1))=1$.
		Moreover, $\e\widehat V^\n\to \e \V$, where $\widehat V^\n$ corresponds to the discretization of~$\X$ which is in its own domain of attraction.
		The limit of $\e\widehat V^\n$ was obtained in~\cite[Prop. 2]{asmussen_ivanovs_twosided} using self-similarity and Spitzer's identity, see also~\cite{asmussen_glynn_pitman1995}. 
	\end{proof}
	
	\subsection{Comments and extensions}
	Note that Theorem~\ref{thm:moments_bound} is weaker than Corollary~\ref{cor:expected} (when the conditions of the latter are satisfied) providing the exact asymptotics of $\e(M-M^\n)^p$. In particular, we see that $\e(M-M^\n)^p$ is a sequence regularly varying at $\infty$ with index $-p/\a$, which is clearly upper-bounded by $n^{-p/\a+\epsilon}$ for large~$n$. 
	
	\begin{remark}\label{rem:p_larger_a}\rm
		Assuming that $X$ has jumps of both signs, it is not hard to see that we may choose $c,h>0$ such that $\p(M-M^\n>h)\geq cn^{-1}$. Hence for any $p>0$ we must have $\e (M-M^\n)^p\geq c'n^{-1}$. Note that this complements the case $p>\a$ in Theorem~\ref{thm:moments_bound} with a lower bound of the same order. Furthermore, when $X\in \mathcal D_{\a,\rho}$ and $p>\a$, we get that $\e \left(V^\n\right)^p\geq b_n^pc'n^{-1}\to \infty$, because $b_n^p$ is regularly varying at $\infty$ with index $p/\a>1$. 
		
		\noindent This question is more delicate for one-sided processes, and we have no complete answer here. In the case of no jumps ($X$ is a Brownian motion with drift) boundedness of exponential moments was established in~\cite{asmussen_glynn_pitman1995}. Furthermore, if $X$ is a b.v.\ spectrally-negative (-positive) process, then the error $M-M^\n$ is bounded from above by $|\gamma'|n^{-1}$, showing that $\e (M-M^\n)^p=\Oh(n^{-p})$, see also~\cite{giles}.
	\end{remark}
	
	\begin{remark}\label{rem:grid_shift}\rm
		Letting $V^\n_s$ be the analogue of $V^\n$ but for a shifted grid $(i+s)/n$ with $i\in\mathbb Z$ and all points in $[0,1]$, 
		we note that also $\{\e \big(V^\n_s\big)^p : n\geq 1,s\in[0,1)\}$ is bounded under the assumptions of Thm.~\ref{thm:UI}; the proof does not need any modifications. This readily implies (by letting $s\uparrow 1,s\downarrow 0$) that we may also exclude the endpoints from the standard grid without affecting the result of Thm.~\ref{thm:UI}.
	\end{remark}
	
	
	\subsubsection{Dealing with big jumps}\label{sss:big_jumps}
	If $\int_1^\infty x\Pi(\D x)=\int_{-\infty}^{-1} |x|\Pi(\D x)=\infty$ then necessarily $\e V^\n=\infty$ for all $n\geq 1$, and 
	the analogous statement is true for all $p>0$. When only the positive jumps, say, are non-integrable we may still arrive to an unbounded sequence of $\e V^\n$. The problem is that the discretization error obtained by looking to the right of the supremum time exclusively may not be bounded in expectation, even when jumps exceeding~1 in absolute value are discarded (this can be shown using the lower bound on the entrance law in Prop.~\ref{prop:bound_excursion}).
	
	Importantly, we may remove the condition $\beta_\infty>1$ on the absence of big jumps if we restrict to the event that no two big jumps are close to each other or to the endpoints of the interval, say.
	For this, let $T_1<T_2<\ldots$ be the  times of jumps exceeding $1$ in absolute value and let $N$ be their number in the interval $(0,1)$; we also put $T_0=0$ and $T_{N+1}=1$. Finally, define the event to be excluded:
	\begin{equation}\label{eq:A}A^\n=\{\exists i\in \{0,\ldots, N\}: T_{i+1}-T_i<1/n\}.\end{equation}
	It is well-known that $\p(A^\n)=\Oh(1/n)$ as $n\to\infty$, which is also easy to see using Slivnyak's formula from Palm theory; this observation will be used in the proof of Prop.~\ref{prop:passage}.
	\begin{proposition}\label{prop:UI_onejump}
		Let $X\in \mathcal D_{\a,\rho}$ with $\a>1$. Then the family $\widetilde V^\n=V^\n\indevent_{{A^\n}^c}$ is uniformly integrable, and $\e \widetilde V^\n\to \e \V$.
	\end{proposition}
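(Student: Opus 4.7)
My plan is to decompose $X$ via its big jumps so that on $\{A^\n\}^c$ the supremum is produced by a bounded-jump L\'evy process, to which Theorem~\ref{thm:UI} and Remark~\ref{rem:grid_shift} apply directly.

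First I would write $X=Y+J$, where $J$ is the compound Poisson process of jumps exceeding $1$ in absolute value and $Y$ is the independent L\'evy process carrying the Gaussian component together with all jumps of size at most $1$. Since $Y$ has jumps bounded by $1$ its L\'evy measure has all absolute moments at infinity, so $\beta_\infty^Y=\infty$; and since zooming-in is insensitive to big jumps, $Y\in\mathcal D_{\a,\rho}$ with the same scaling function $a_\ep$. Thus Theorem~\ref{thm:UI} combined with Remark~\ref{rem:grid_shift} applies to $Y$ for every $p\in(0,\a)$, and since $\a>1$ we may fix such a $p>1$.

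Next I would exploit the excursion structure on $\{A^\n\}^c$. Enumerate the big jump times as $0=T_0<T_1<\cdots<T_N<T_{N+1}=1$ and set $L_i=T_{i+1}-T_i$ and $W^{(i)}_s:=Y_{T_i+s}-Y_{T_i}$; by the strong Markov property each $W^{(i)}$ has the law of $Y$ conditionally on the $T_j$'s. On each $[T_i,T_{i+1})$ the process satisfies $X_t=X_{T_i}+W^{(i)}_{t-T_i}$. On $\{A^\n\}^c$ every $L_i\ge 1/n$, so every open interval $(T_i,T_{i+1})$ contains at least one grid point almost surely. Letting $i^*$ index the sub-interval containing the supremum, the additive constants $X_{T_{i^*}}$ cancel and
\[
M-M^\n \;\le\; \sup_{s\in[0,L_{i^*})}W^{(i^*)}_s-\max_{k/n\in(T_{i^*},T_{i^*+1})}W^{(i^*)}_{k/n-T_{i^*}}\;=:\;E^\n_{i^*},
\]
so that $\widetilde V^\n\le b_n\max_{0\le i\le N} E^\n_i$ on $\{A^\n\}^c$.

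To bound $b_n E^\n_i$ in $L^p$, I would time-change by $\tilde W^{(i)}_u:=W^{(i)}_{L_i u}$, $u\in[0,1]$. Then $\tilde W^{(i)}$ is itself a L\'evy process (characteristics rescaled by $L_i$) still belonging to $\mathcal D_{\a,\rho}$, but with scaling function $a^{\tilde W}_\ep=a_{L_i\ep}$. The quantity $b_n E^\n_i$ coincides with the normalized discretization error of $\tilde W^{(i)}$ on $[0,1]$ along a shifted grid of step $1/(nL_i)$, because $1/a^{\tilde W}_{1/(nL_i)}=1/a_{1/n}=b_n$. Theorem~\ref{thm:UI} together with Remark~\ref{rem:grid_shift} then yield $\e(b_n E^\n_i)^p\le C$ uniformly in $n$ and in $L_i\in(0,1]$. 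Using $(\max_i x_i)^p\le\sum_i x_i^p$ and $\e N<\infty$ gives $\sup_n\e(\widetilde V^\n)^p<\infty$, hence uniform integrability. The convergence $\e\widetilde V^\n\to\e\widehat V$ then follows by combining the weak convergence~\eqref{eq:supremum_error} (the conditioning is trivial since $\a>1$ forces $X$ to be ub.v.\ and $\p(\tau\in(0,1))=1$), the a.s.\ convergence $\indevent_{\{A^\n\}^c}\to 1$ (since $\p(A^\n)=\O(1/n)$), and the uniform integrability just established.

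The delicate part is the uniformity in $L_i$ in the time-change step: the constant in Theorem~\ref{thm:UI} a priori depends on the underlying L\'evy triplet, and must be shown to behave uniformly across the one-parameter family $\{\tilde W^{(i)}\}_{L_i\in(0,1]}$. Heuristically this should hold because $\tilde W^{(i)}\Rightarrow\widehat X$ as $L_i\downarrow 0$ and the bound for the self-similar attractor $\widehat X$ serves as a limiting value, so the constant should be continuous in $L_i\in(0,1]$ with a finite limit at $0$; making this rigorous---either by tracing constants through the proof of Theorem~\ref{thm:UI} or by replacing the time-change with a direct shifted-grid argument applied to $Y$ over a deterministic window that captures the relevant sub-interval---is the crux of the proof.
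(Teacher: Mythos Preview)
Your overall strategy---split at the big-jump times, bound the global discretization error by the maximum (then sum) of the per-piece errors, and exploit that the bounded-jump process $Y$ has $\beta_\infty^Y=\infty$---is exactly the paper's strategy. The convergence part is also identical: $\indevent_{\{A^\n\}^c}\to 1$ in probability, Slutsky, and then uniform integrability.

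However, your time-change step creates an artificial difficulty that you correctly flag but do not resolve, and this is a genuine gap. By rescaling each piece to $[0,1]$ you produce a \emph{family} of L\'evy processes $\tilde W^{(i)}$ indexed by $L_i\in(0,1]$, and Theorem~\ref{thm:UI} gives a bound that a priori depends on the triplet; the uniformity over $L_i$ is not free. Your heuristic (``the constant should be continuous in $L_i$ with a finite limit at $0$'') is not a proof, and tracing constants through Lemma~\ref{lem:levyp_conv} and Lemma~\ref{lem:UIsup} for this one-parameter family would be tedious and is unnecessary.

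The clean fix---which is precisely what the paper does, and which your closing parenthetical almost names---is to avoid the time-change entirely and instead work in \emph{zoomed-in} coordinates. Let $Y^\n_t:=b_n Y_{t/n}$. After zooming, the $i$-th piece is an interval of length $nL_i\ge 1$ and the grid becomes integer-spaced; $b_nE^\n_i$ is then the discretization error of $Y^\n$ on an interval of length $\ge 1$. Now Lemma~\ref{lem:bertoin} applies in its full strength: for \emph{every} $T\ge 1$ the relevant error $Z_T$ is stochastically dominated by $\overline Y^\n_1-\underline Y^\n_1$, a quantity that depends only on $Y^\n$ over a unit interval and not at all on $L_i$. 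Lemmas~\ref{lem:levyp_conv} and~\ref{lem:UIsup} then give a single bound $\e(\overline Y^\n_1-\underline Y^\n_1)^p\le C$ uniform in $n$, and the rest of your argument (sum over pieces, $\e(1+N)^{p}<\infty$) goes through verbatim. In short: the uniformity you worry about is already built into Lemma~\ref{lem:bertoin}, provided you zoom rather than stretch.
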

	
	\begin{proof}
		The proof is given in Appendix \ref{app:s_moments}.
	\end{proof}
	
	\subsubsection{Conjecture for processes of bounded variation}
	Consider a b.v.\ process $X$ with $\int_{|x|>1}|x|\Pi(\D x)<\infty$.
	Recall that we have an upper bound on $\e(M-M^\n)$ of order $n^{-1}$, see Theorem~\ref{thm:moments_bound}, and a lower bound of the same order when $X$ has jumps of both signs, see Remark~\ref{rem:p_larger_a}.
	It is thus natural to ask if $n\e(M-M^\n)$ has a finite positive limit as $n\to\infty$.
	We conjecture the following:
	\begin{equation}\label{eq:cpp}n\e(M-M^\n)\to \tfrac{1}{2} |\gamma'|\cdot \p(\tau\in(0,1)) + \tfrac{1}{2}I,\end{equation}
	where
	\begin{multline*}I=:\int_0^1\iint_{x,y,u,v\geq 0} ((x-u) \wedge (y-v))_+ \\\Pi(\D x)\Pi(-\D y)\p(-\underline X_t\in \D u)\p(\overline X_{1-t}\in \D v)\D t.
	\end{multline*}
	Here the first term is suggested by the small-time behavior, and the second comes from the possibility of having  a jump up (of size $x$) before $\tau$ and a jump down (of size $-y$) after $\tau$, and no observations in-between. This result should not rely on the zooming-in assumption~\eqref{eq:zooming}, and may require quite a different set of tools for its proof. Proposition~\ref{prop:cpp} in Appendix~\ref{app:s_moments} demonstrates~\eqref{eq:cpp} in the simple case of a compound Poisson process with drift. Note also that $I=0$ in the case of one-sided jumps, and otherwise we can not expect $n(M-M^\n)$ to be uniformly integrable.
	
	\subsection{Proofs}
	The main idea is to consider a certain upper bound on $V^\n$ given in terms of fluctuation of the zoomed-in process $X^\n_t=X_{t/n}/a_{1/n},t\in[0,n]$ around its supremum time.
	Importantly, this upper bound is stochastically dominated by the diameter of the range of $X^\n$ on the unit interval, and the latter can be analyzed using standard techniques.
	This crucial domination result relies on Bertoin's representation of pre- and post-supremum processes in~\cite{bertoin} and it is given by Lemma~\ref{lem:bertoin} below.
	
	Recall that we write $\overline X_T$ and $\tau(T)$ for the supremum and its time over the interval $[0,T]$ where $T>0$.	
	
	\begin{lemma}\label{lem:bertoin}
		Consider $X$ on the interval $[0,T]$ for any fixed $T\geq 1$ and let 
		\[Z_T=\sup_{t\in[0,1]}\{(\overline X_T-X_{\tau(T)-t})\wedge(\overline X_T-X_{\tau(T)+1-t})\}\]
		with convention that $X_s=-\infty$ if $s \notin [0,T]$.
		Then $Z_T$ is first-order stochastically dominated by $Z_1$ and hence by $\overline X_1-\underline X_1$.
	\end{lemma}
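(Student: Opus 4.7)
The plan is to prove the lemma in two steps: first verify $Z_1=\overline X_1-\underline X_1$ by direct calculation (yielding the ``hence'' clause), and then establish $Z_T\leq_{\rm st}Z_1$ via Bertoin's splitting of the L\'evy path at its supremum combined with a coupling argument.

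For the identity, take $\tau=\tau(1)\in[0,1]$ and use the convention $X_s=-\infty$ for $s\notin[0,1]$, giving $\overline X_1-X_s=+\infty$ off $[0,1]$. For each $t\in[0,1]\setminus\{\tau\}$ exactly one of the shifted times $\tau-t$ and $\tau+1-t$ lies in $[0,1]$ (the former iff $t\le\tau$, the latter iff $t\ge\tau$), so the minimum in the definition of $Z_1$ equals the unique finite term; splitting the supremum at $t=\tau$ yields
\[
Z_1=\Bigl(\overline X_1-\inf_{s\in[0,\tau]}X_s\Bigr)\vee\Bigl(\overline X_1-\inf_{s\in[\tau,1]}X_s\Bigr)=\overline X_1-\underline X_1.
\]

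For the stochastic domination, rewrite $Z_T=\sup_{t\in[0,1]}(\check X_t\wedge\tilde X_{1-t})$ where $\check X_s:=\overline X_T-X_{\tau(T)-s}$ on $[0,\tau(T)]$ and $\tilde X_s:=\overline X_T-X_{\tau(T)+s}$ on $[0,T-\tau(T)]$, both extended to $+\infty$ beyond their time domains. The crucial monotonicity property is that the functional $(f,g)\mapsto\sup_t(f(t)\wedge g(1-t))$ is pointwise non-increasing in each argument, since raising a value to $+\infty$ can only enlarge a minimum. Bertoin's splitting from~\cite{bertoin_splitting} furnishes, conditional on $\tau(T)$, independence of the non-negative processes $\check X$ and $\tilde X$, together with a representation of each as a juxtaposition of excursions on $[0,\infty)$ drawn from the It\^o measures of the reflected processes $\widehat R=X-\underline X$ and $R=\overline X-X$, stopped when the total real-time horizon $\tau(T)$ or $T-\tau(T)$ is exhausted. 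On the event $\{\tau(T)\le 1\}$ one has $\overline X_T=\overline X_1$ and $\tau(T)=\tau(1)$, so that $\check X^{(T)}=\check X^{(1)}$ on $[0,1]$ while $\tilde X^{(T)}\le\tilde X^{(1)}$ pointwise on $[0,1]$ (the $T$-realisation extends for longer since $T-\tau(T)\ge 1-\tau(1)$, and the $T=1$-realisation is $+\infty$ on the complement). Monotonicity then gives $Z_T\le Z_1$ pathwise on this event. On the complementary event $\{\tau(T)>1\}$, apply the strong Markov property at time~$1$: the shifted process $Y_s:=X_{1+s}-X_1$ on $[0,T-1]$ is an independent copy of a L\'evy process, its supremum time is $\tau(T)-1$, and, at least when $\tau(T)\ge 2$, the functional $Z_T$ decouples into the analogous quantity for $Y$ on $[0,T-1]$, allowing an inductive reduction to the base case $T\le 1$.

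The main obstacle is the intermediate regime $\tau(T)\in(1,2)$, where the unit window around $\tau(T)$ straddles the time-$1$ Markov split and neither $\check X$ nor $\tilde X$ is purely a function of the post-$1$ shifted process. The resolution would use Bertoin's joint description of the excursion point processes to align the straddling excursions at real times~$T$ and~$1$, and combine this with the monotonicity of the functional to extract the required domination; the size-biasing properties of the excursion measure appear to play a key role here.
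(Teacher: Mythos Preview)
Your verification that $Z_1=\overline X_1-\underline X_1$ is correct, and your pathwise argument on the event $\{\tau(T)\leq 1\}$ is also fine. The gap is exactly where you say it is: the case $\tau(T)\in(1,2)$ is not handled, and your inductive reduction does not close. Even granting the step for $\tau(T)\geq 2$, the induction would reduce $T$ to $T-1$, which only terminates for integer~$T$ and still leaves the straddling regime open at each stage. You end the argument by saying that ``the resolution would use Bertoin's joint description \ldots\ to align the straddling excursions'', but this is a hope, not a proof; the size-biasing and alignment you allude to are not carried out.

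The paper's approach sidesteps the case split entirely. Bertoin's representation in~\cite{bertoin_splitting} gives $Z_T\eqd Z'_T$ with
\[
Z'_T=\sup_{t\in[0,1]}\bigl\{X_t^\Uparrow\wedge\bigl(-X_{1-t}^\Downarrow\bigr)\bigr\},
\]
where $X^\Uparrow_t=Y^+_{a^+_t}$ and $X^\Downarrow_t=Y^-_{a^-_t}$ for processes $Y^\pm$ and time changes $a^\pm$ that \emph{do not depend on~$T$}. The only $T$-dependence is that $X^\Uparrow$ and $X^\Downarrow$ are sent to a cemetery state at times $A^+_T$ and $A^-_T$, which are nondecreasing in~$T$. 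With the convention that the cemetery value is ignored in the minimum (equivalently, treated as $+\infty$), increasing $T$ can only replace a cemetery value by a finite one, hence can only decrease $X_t^\Uparrow\wedge(-X_{1-t}^\Downarrow)$ pointwise in~$t$. This gives $Z'_T\leq Z'_1$ almost surely on a single probability space, for every $T\geq 1$ simultaneously, and the stochastic domination follows. The point you are missing is that the representation furnishes a \emph{single} coupling valid for all~$T$; there is no need to condition on the location of $\tau(T)$ or to run an induction.
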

	\begin{proof}
		According to Bertoin's~\cite{bertoin_splitting} representation of the joint law of post- and pre-supremum processes on the interval $[0,T]$ we have $Z_T\,\eqd\, Z'_T$, where
		\begin{equation}\label{eq:bertoin}Z'_T := \sup_{t\in[0,1]}\{X_t^\Uparrow\wedge -(X_{1-t}^\Downarrow)\},\end{equation}
		where $X_t^\Uparrow=Y^+_{a^+_t}$ and $X_{t}^\Downarrow=Y^-_{a^-_t}$ for some processes $Y^\pm$ (which do not depend on the choice of $T$) and $a^+_t,a^-_t$ being the right-continuous inverses of $A^+_t :=\int_0^t\ind{X_s>0}\D s$ and $A^-_t := \int_0^t\ind{X_s\leq 0}\D s$, respectively.  In particular, $X^\Uparrow$ and $X^\Downarrow$  jump into cemetery state at the times $A_T^+$ and $A^-_T$, respectively, which is the only dependence on the time horizon~$T$. We use the convention that the cemetery state in the above minimum is ignored so that minimum yields the other quantity.  Thus for increasing $T$, the deaths of processes $X^\Uparrow$ and $X^\Downarrow$ can occur only later and hence $X_t^\Uparrow\wedge -(X_{1-t}^\Downarrow)$ may only become smaller for each~$t$, and so $Z'_1 \geq Z'_T$ a.s. 
		Thus $Z_T$ is stochastically dominated by $Z_1$, but from a simple sample path consideration we have $Z_1=\overline X_1-\underline X_1$ a.s.\ concluding the proof.
	\end{proof}
	
	In the above Lemma it is crucial to consider pre- and post-supremum processes together, since any of them can die arbitrarily early whereas the sum of lifetimes is exactly~$T$. In particular, we can not conclude that $\sup_{t\in[0,1],t\leq \tau}\{\overline X_T-X_{\tau(T)-t}\}$ is dominated by the analogous quantity for $T=1$. In fact, the opposite is true. Furthermore, the results of this Section are false if we look only to the left (or to the right) of the time of supremum. This should not be confused with the removal of the observations at the endpoints as discussed in Remark~\ref{rem:grid_shift}.
	
	
	In the following we consider a family of L\'evy processes
	\begin{equation}\label{def:Xn}
	X^\n_t=b_n X_{t/n}
	\end{equation}
	with $b_n=1/a_{1/n}$ whenever~\eqref{eq:zooming} holds. Let $(\sigma^\n,\gamma^\n,\Pi^\n)$ be the corresponding L\'evy triplets. It is noted that $\Pi^\n(\D x) = \Pi(b_n^{-1}\D x)/n$,
	\begin{equation}\label{eq:n_triplet}
	\gamma^\n = \frac{b_n}{n}\Big(\gamma - \int_{b_n^{-1}\leq|x|<1} x\Pi(\D x)\Big) \quad {\rm and} \quad \sigma^\n = \sigma b_n/\sqrt n.
	\end{equation}
	\begin{lemma}\label{lem:levyp_conv}
		Let $X\in \mathcal D_{\a,\rho}$. Then $\gamma^\n, \sigma^\n,\int_{|x|\leq 1} x^2\Pi^\n(\D x)$ have finite limits as $n\to\infty$.
		Moreover, for any $p<\a$ such that $\int_1^\infty x^p\Pi(\D x)<\infty$ we have
		\[\int_1^\infty x^p\Pi^\n(\D x)\to \int_1^\infty x^p\widehat\Pi(\D x)<\infty.\]	
	\end{lemma}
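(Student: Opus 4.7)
The zooming-in assumption gives $X_1^\n = b_n X_{1/n} \convd \X_1$ with both laws infinitely divisible. Applying the standard continuity theorem for L\'evy triplets, e.g.\ \cite[Thm.~15.14]{kallenberg}, yields simultaneously $\gamma^\n \to \widehat{\gamma}$, vague convergence $\Pi^\n \to \widehat{\Pi}$ on $\R\setminus\{0\}$, and $(\sigma^\n)^2 + \int_{|x|\leq r} x^2 \Pi^\n(\D x) \to \widehat{\sigma}^2 + \int_{|x|\leq r} x^2 \widehat{\Pi}(\D x)$ for every continuity point $r>0$ of $\widehat{\Pi}$. Since $\X$ is Brownian, a drift, or strictly stable, $\widehat{\Pi}$ is either zero or absolutely continuous, so in particular $r=1$ qualifies. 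To pull $\sigma^\n$ out of the quadratic integral, use the explicit formula $\sigma^\n = \sigma b_n/\sqrt{n}$: this is zero if $\sigma = 0$; if $\sigma > 0$ then $\alpha = 2$ is forced and the nontriviality of the Gaussian limit forces $\sigma b_n/\sqrt n \to \widehat{\sigma}$. Consequently all three quantities $\gamma^\n$, $\sigma^\n$ and $\int_{|x|\leq 1} x^2 \Pi^\n(\D x)$ have finite limits.

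For the tail-moment claim, the limit $\int_1^\infty x^p \widehat\Pi(\D x)$ is explicitly finite: zero in the Brownian/drift cases and equal to $\alpha c_+/(\alpha-p)$ when $\X$ is strictly stable with $\alpha < 2$ (using $p<\alpha$). The vague convergence already gives $\int_1^R x^p \Pi^\n(\D x) \to \int_1^R x^p \widehat{\Pi}(\D x)$ for each finite $R$, so the full claim reduces to the uniform tightness $\lim_{R\to\infty}\limsup_{n\to\infty} \int_R^\infty x^p \Pi^\n(\D x) = 0$.

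For the tightness, distinguish cases by whether $\widehat{\Pi}$ vanishes. In the stable regime, where $\X$ is strictly $\alpha$-stable with $\alpha \in (0,2)$, the characterization of $\mathcal D_{\alpha,\rho}$ from~\cite{ivanovs_zooming} gives $\overline\Pi \in \RV_{-\alpha}$ at $0$; Potter's bound yields $\Pi^\n([R,\infty)) \leq C_\delta R^{-\alpha+\delta}$ uniformly in large $n$ for any small $\delta>0$, and integration by parts then produces $\int_R^\infty x^p \Pi^\n(\D x) = O(R^{p-\alpha+\delta})$, which vanishes as $R\to\infty$ once $\delta < \alpha - p$. In the Brownian/drift regime, where $\widehat\Pi = 0$, rewrite the tail as $(b_n^p/n)\int_{R/b_n}^\infty y^p \Pi(\D y)$ and split at $y = 1$: the $[1,\infty)$ part vanishes since $b_n^p/n \to 0$, while the $[R/b_n, 1]$ part is bounded using $y^p \leq (R/b_n)^{p-q} y^q$ for a suitable $q$ ($q = 1$ in the b.v.\ drift subcase, $q=2$ in the Gaussian subcase). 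The hardest step is the $\alpha = 2$, $\sigma = 0$ subcase, in which $b_n^2/n \to \infty$ renders the crude estimate $(b_n^2/n) v(1)$ divergent; here one integrates by parts against $\D v_+(y) = y^2 \Pi_+(\D y)$ and invokes Karamata's theorem for the slowly varying $v$ (whose slow variation at $0$ is the defining condition for the Gaussian domain of attraction) to extract a cancellation between the boundary and integral contributions, recovering the clean $O(R^{p-2})$ bound through the normalization $(b_n^2/n) v(1/b_n) \to \widehat{\sigma}^2$.
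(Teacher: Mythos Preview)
Your plan for the first part (via Kallenberg's continuity theorem for triplets together with the explicit formula for $\sigma^\n$) matches the paper's. For the tail moment, your organizing principle---vague convergence on $[1,R]$ plus uniform tail tightness beyond $R$---is sound, and in the strictly stable regime the Potter-bound argument is a clean alternative to the paper's direct Karamata computation of the limit $\widehat c_+/(\a-p)$.

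There is, however, a genuine gap in your case analysis when $\widehat\Pi=0$. You cover (i) the Gaussian case with $\sigma>0$ via $q=2$, (ii) the b.v.\ drift case via $q=1$, and (iii) the Gaussian case with $\sigma=0$ via Karamata on the slowly varying $v$. But you omit the \emph{ub.v.\ linear-drift} subcase, which is legitimate: one can have $\a=1$, $\X$ a linear drift, yet $X$ of unbounded variation (see the $\a=1$ ub.v.\ clause of Proposition~\ref{prop:zoom_sufficient} and the tempered-stable examples). In that subcase $\int_0^1|y|\Pi(\D y)=\infty$ so the $q=1$ bound fails, while $b_n\in\RV_1$ gives $b_n^2/n\to\infty$ so the $q=2$ bound also diverges; moreover $v$ need not be slowly varying here, so your Karamata step from the $\a=2,\sigma=0$ subcase does not transfer. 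The paper's argument for this case is genuinely different: it exploits the domain-of-attraction conditions $x\overline\Pi(x)/m(x)\to 0$ and $m\in\RV_0$ (the truncated \emph{mean}, not the variance, cf.~\eqref{def:trunc_mv}) together with a separate Karamata step to force the integral to zero.

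A smaller point: in your $\a=2,\sigma=0$ step you integrate by parts against $\D v_+(y)=y^2\Pi_+(\D y)$ but then invoke slow variation of $v$, not $v_+$. Since only the two-sided $v$ is guaranteed to be slowly varying, it is cleaner to first bound $y^p\Pi_+(\D y)\leq y^{p-2}\D v(y)$ and run the Karamata argument on $v$ throughout, as the paper does.
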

	\begin{proof} 
		The first part is a direct consequence of~\cite[Eq.\ (19)--(21)]{ivanovs_zooming}, see also~\cite[Thm.\ 15.14]{kallenberg}. The second part relies on regular variation and conditions for the domains of attraction, and is given in Appendix~\ref{app:s_moments}.
	\end{proof}
	
	\begin{lemma}\label{lem:UIsup}
		If $\gamma^\n_+, \sigma^\n,\int_{|x|\leq 1} x^2\Pi^\n(\D x),\int_1^\infty x^p\Pi^\n(\D x)$ are bounded then so is $\e(\overline X^\n_1)^p$.
	\end{lemma}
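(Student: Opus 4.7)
The plan is to decompose $X^\n$ via L\'evy--It\^o and bound the supremum of each component separately. Writing
\[X^\n_t=\gamma^\n t+\sigma^\n W_t+N^\n_t+J^{\n,+}_t+J^{\n,-}_t,\]
where $W$ is a standard Brownian motion, $N^\n$ is the $L^2$-martingale of compensated jumps of size $<1$, and $J^{\n,\pm}_t$ is the compound Poisson of positive (resp.\ negative) jumps of absolute size $\ge 1$, one has $J^{\n,-}_t\le 0$, $J^{\n,+}_t$ non-decreasing, and $\gamma^\n t\le \gamma^\n_+$ on $[0,1]$. Taking suprema termwise and using subadditivity of $\sup$ then yields the pathwise bound
\[\oX^\n_1\le \gamma^\n_+ + \sigma^\n\sup_{t\le 1}|W_t| + \sup_{t\le 1}|N^\n_t| + J^{\n,+}_1.\]
Applying $(a_1+\cdots+a_4)^p\le C_p(a_1^p+\cdots+a_4^p)$ reduces the lemma to uniform boundedness in $n$ of the $p$-th moments of each of the four summands.

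The first two contributions are immediate from the hypotheses on $\gamma^\n_+$ and $\sigma^\n$, since $\e\sup_{t\le 1}|W_t|^p$ is a finite universal constant. For the small-jump martingale $N^\n$, Doob's $L^2$ inequality gives $\e\sup_{t\le 1}|N^\n_t|^2\le 4\int_{|x|<1}x^2\Pi^\n(\D x)$, and Jensen's inequality converts this into a uniform bound on $\e\sup_{t\le 1}|N^\n_t|^p$ for any $p\le 2$ (the relevant range since $\a\le 2$); for $p>2$ one would additionally invoke a Rosenthal/Bichteler--Jacod inequality for jump martingales and use $|x|^p\le x^2$ on $(-1,1)$. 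For the big positive-jumps part $J^{\n,+}_1$, the subadditivity $(\sum y_k)^p\le\sum y_k^p$ combined with Campbell's formula handles $p\le 1$ and yields $\e(J^{\n,+}_1)^p\le \int_1^\infty x^p\Pi^\n(\D x)$ directly; for $p>1$ one splits off the compensator $\int_1^\infty x\,\Pi^\n(\D x)$, which is itself bounded via $x\le x^p$ on $[1,\infty)$, and controls the compensated remainder by Novikov's inequality for $p\in[1,2]$ (or Rosenthal for $p>2$), each time reducing to $\int_1^\infty x^p\Pi^\n(\D x)$.

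The only conceptual point worth emphasising is that no hypothesis is placed on large negative jumps: they enter only through $J^{\n,-}\le 0$, which can be dropped without increasing any supremum. This explains the one-sidedness of the condition on $\int_1^\infty x^p\Pi^\n(\D x)$. The remainder of the argument is an assembly of classical moment inequalities for Poisson-driven martingales, and I do not anticipate any substantive obstacle beyond bookkeeping across the regimes $p\le 1$, $1<p\le 2$, and (if needed) $p>2$.
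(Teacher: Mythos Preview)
Your proposal is correct and follows the same broad architecture as the paper: decompose $X^\n$ via L\'evy--It\^o, drop the large negative jumps (which can only decrease the supremum), and bound the $p$-th moment of each remaining piece separately using $(a+b)^p\le C_p(a^p+b^p)$. The one-sidedness of the hypothesis on $\int_1^\infty x^p\Pi^\n(\D x)$ is exploited identically in both arguments.

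The technical execution differs. You keep the Gaussian and small-jump martingale parts separate and control them with Doob's $L^2$ inequality plus Jensen (resp.\ Rosenthal for $p>2$). The paper instead merges them into a single centred process $Z^\n$ with jumps bounded by~$1$ and uses an exponential-moment trick: since $x^p\le a e^x$ on $[0,\infty)$, one reduces to bounding $\e e^{\pm Z_1^\n}=\exp\psi_{Z^\n}(\pm 1)$, which is immediate from the L\'evy--Khintchine formula once $\sigma^\n$ and $\int_{|x|\le 1}x^2\Pi^\n(\D x)$ are bounded; Doob's $L^p$ inequality (for $p>1$) or the crude $x^p<1+x^2$ (for $p\le 1$) then passes to the supremum. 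For the big positive jumps, you use Campbell/Novikov/Rosenthal, while the paper writes $P_1^\n$ as a random sum and applies Minkowski to get $\lVert P_1^\n\rVert_p\le \lVert N^\n\rVert_p\lVert\Delta^\n\rVert_p$, bounding the Poisson moments polynomially in $\lambda^\n\le\int_1^\infty x^p\Pi^\n(\D x)$.

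Your route is more modular and invokes named inequalities that are standard in stochastic analysis; the paper's route is slightly slicker in that the exponential bound handles all $p>1$ for the small-jump part in one stroke without case distinctions or any Rosenthal-type machinery. Both are complete.
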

	\begin{proof}
		The following arguments seem to be rather standard.
		Let $Z^\n_t$ be the process $X^\n_t$, when the jumps exceeding~$1$ in absolute value are discarded and then the mean is subtracted; in other words we temporarily assume that $\Pi^\n(-\infty,-1)=\Pi^\n(1,\infty)=\gamma^\n=0$. Assume for the moment that $p>1$ and note that $x^p\leq ae^{x}$ for some $a>0$ and all $x\geq 0$. Hence
		\[\e\left|Z_1^\n\right|^p/a\leq \e\exp\left(\left|Z_1^\n\right|\right)\leq  \e\exp\left(Z_1^\n\right)+\e\exp\left(-Z_1^\n\right)\]
		showing that $\lVert Z_1^\n\rVert_p$ is bounded if so are $\psi_{Z^\n}(\pm 1)$, but the latter follows from the L\'evy-Khintchine formula and boundedness of $\sigma^\n,\int_{|x|\leq 1} x^2\Pi^\n(\D x)$.
		The process $Z^\n$ is a martingale, and by Doob's martingale inequality we have
		\[\lVert\overline Z_1^\n\rVert_p\leq \frac{p}{p-1}\lVert Z_1^\n\rVert_p.\]
		For $p\in(0,1]$ we simply use the inequality $x^p<1+x^2$ for all $x>0$, and so $\e (\overline Z_1^\n)^p$ is bounded for all $n\geq 1$.
		
		Note that 
		\[\overline X_1^\n\leq \overline Z_1^\n+\gamma^\n_++P_1^\n,\]
		where $P_1^\n$ is an independent compound Poisson process with L\'evy measure $\Pi^\n(\D x)\ind{x\geq 1}$.
		But for any $p>0$ we have the inequality $(x+y)^p\leq (1\vee 2^{p-1})(x^p+y^p)$  for all $x,y>0$.
		Thus it is left to show that $\e(P_1^\n)^p$ is bounded.
		By Minkowski inequality we find for $p\geq 1$ that
		\begin{align}\label{eq:poiss_minkowski}
		\lVert P_1^\n\rVert_p^p & \leq \lVert N^\n\rVert_p^p \lVert \Delta^\n\rVert_p^p \leq \e (N^\n)^{\lceil p \rceil}\int_1^\infty x^p\Pi^\n(\D x)/\lambda^\n
		\end{align}
		where $N^\n$ is Poisson  with intensity $\lambda^\n=\Pi^\n(1,\infty)$, and the generic jump $\Delta^\n$ is distributed according to $\ind{x>1}\Pi^\n(\D x)/\lambda^\n$. But the moments of Poisson distribution are polynomial functions (with 0 free term) of its intensity. This shows that the right hand side of \eqref{eq:poiss_minkowski} is indeed bounded, because so are $\Pi^\n(1,\infty)\leq \int_1^\infty x^p\Pi^\n(\D x)$.
		For $p\in(0,1)$ we use the simple bound: $(x+y)^p\leq x^p+y^p$ for all $x,y>0$.
	\end{proof} 
	
	\begin{proof}[Proof of Theorem~\ref{thm:UI}]
		Observe that $V^\n$ is the error made by considering the maximum of $X^\n_t$ at the times $0,1,\ldots,n$ as compared to its supremum on $[0,n]$.
		According to Lemma~\ref{lem:bertoin} we find that $V^\n$ is first-order stochastically dominated by $\overline X^\n_1-\underline X^\n_1$; it is sufficient to look at the discretization epochs right next to the time of supremum.
		But from Lemma~\ref{lem:levyp_conv} and Lemma~\ref{lem:UIsup} (applied to $X$ and $-X$) we readily see that the sequence $\e(\overline X^\n_1-\underline X^\n_1)^p,n\geq 1$ is bounded
		given that $p<\alpha$ and $\int_{|x|>1}|x|^p\Pi(\D x)<\infty$.
	\end{proof}

	\begin{proof}[Proof of Theorem~\ref{thm:moments_bound}]
		Define a family of L\'evy processes $X^\n_t$ as in \eqref{def:Xn} with $b_n=n^{1/\a_+}$ where $\a_+>\a\geq p$.
		Using Lemma~\ref{lem:bertoin} we obtain
		\[\e(M-M^\n)^p=n^{-p/\a_+}\e(b_n(M-M^\n))^p\leq n^{-p/\a_+}\e \left(\overline X^\n_1-\underline X^\n_1\right)^p.\]
		Furthermore, we may apply Lemma~\ref{lem:UIsup} because according to Lemma~\ref{lem:gen_conv0} in Appendix~\ref{app:s_moments}, all the relevant quantities have $0$ limits. The proof is now complete for $p\leq \a$. In the cases (i) $p\leq \a=2$ and (ii) $p\leq \a=1$ with $X$ b.v.\ we use Lemma~\ref{lem:gen_conv2} instead.
		
		When $p>\a$ we simply take $\a_+=p$ so that $b_n=n^{1/p}$. Note that 
		\[\int_1^\infty x^p\Pi^\n(\D x)=\int_{b_n^{-1}}^\infty x^p\Pi(\D x),\]
		which is bounded since $p>\a\geq \beta_0$. The above reasoning now applies and we get the upper bound of order $n^{-1}$.
	\end{proof}

	\section{Asymptotic probability of error in threshold exceedance}\label{sec:detection_error}
	
	Consider the error probability $\p(M>x,M^\n\leq x)$ in detection of threshold exceedance.
	The main aim of this section is to prove the following theorem.
	
	\begin{theorem}\label{thm:passage}
		Assume that $X\in\mathcal D_{\a,\rho}$ with $\a>1$. Then $M$ has a continuous density, say $f_M(x)$, and for any $x>0$
		\[b_n\p(M>x,M^\n\leq x)\to f_M(x)\e \V\]
		as $n\to \infty$, where $b_n=1/a_{1/n}$ and $\e \V$ is given in Corollary~\ref{cor:expected}.  
	\end{theorem}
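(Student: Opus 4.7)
The plan is to rigorize the heuristic chain displayed in the introduction,
\[
b_n\Delta_n(x) = b_n\p(x < M \leq x + b_n^{-1} V^\n) \approx \int_0^\infty b_n \p(x < M \leq x + b_n^{-1}y)\,\p(V^\n \in \D y) \to f_M(x)\,\e\V,
\]
where the final arrow uses continuity of $f_M$ (so that $b_n\p(x < M \leq x+y/b_n) \to y f_M(x)$ for each fixed $y$) together with $\e V^\n \to \e\V$, which is provided by Corollary~\ref{cor:expected} under $\a > 1$. The substantive work lies in justifying the first $\approx$.

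\textbf{Step 1: continuity of $f_M$.} Since $\a > 1$, $X$ has unbounded variation, $0$ is regular for both half-lines, and the ladder height process $H$ is a genuine subordinator. By Corollary~\ref{cor:n_rv}, $\overline\Pi_H \in \RV_{-\a\rho}$ unless $X$ creeps upward, in which case $H$ has a positive linear drift. In either case the renewal measure $U_H$ admits a continuous density on $(0,\infty)$, from which a standard argument (e.g.\ via the potential representation of $\p(\overline X_{e_q} \in \D u)$ combined with Laplace inversion in $q$, or directly via~\cite{chaumont_supremum}) yields a continuous density $f_M$ on $(0,\infty)$.

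\textbf{Step 2: localization and stable decoupling.} Fix $T > 0$ large and localize around $\tau$ by introducing the window $W_n^T$ of $2T+1$ grid cells centered at the cell containing $\tau$. Using Lemma~\ref{lem:bertoin} and Theorem~\ref{thm:UI}, one shows that the discretization error contributed by grid points outside $W_n^T$ is $O(1/b_n)$ in $L^1$ and vanishes in comparison with $V^\n$ as $T \to \infty$, uniformly in $n$. Inside $W_n^T$, the zooming-in assumption~\eqref{eq:zooming} together with the R\'enyi-mixing behind~\eqref{eq:supremum_error} ensures that the window fluctuations converge in law to $\V$ and are asymptotically independent of any random variable measurable with respect to the process outside $W_n^T$. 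Since $M$ is approximately determined by the outside behavior up to a $V^\n/b_n$ correction, continuity of $f_M$ allows a conditional Taylor expansion
\[
b_n\p\bigl(x < M \leq x + y/b_n \,\big|\, \mathcal F_n^{\mathrm{out}}\bigr) \to y f_M(x), \qquad n \to \infty,
\]
for each fixed $y > 0$ in the appropriate stable sense. Integrating against the (asymptotic) conditional law of $V^\n$ and invoking uniform integrability from Theorem~\ref{thm:UI} then delivers $b_n\Delta_n(x) \to f_M(x)\,\e\V$ after letting $T \to \infty$.

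\textbf{Main obstacle.} The crux is the decoupling in Step~2. As the introduction notes, joint weak convergence of $(M, V^\n)$ with asymptotically independent limits is not enough: the counterexamples of~\cite[Thm.~2.4(ii)]{avikainen2009irregular} show that such an ``$\approx$'' can fail for indicator functionals. What is required is quantitative uniform control of the conditional density of $M$ given $\mathcal F_n^{\mathrm{out}}$ near $x$, uniformly in $T$ and $n$, so that the Taylor expansion can be carried out inside the conditional expectation. Establishing this uniformity --- presumably by combining the smoothness from Step~1 with the precise stable convergence from the zooming-in theory --- is the technical heart of the argument.
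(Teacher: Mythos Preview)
Your proposal correctly identifies the obstacle but does not overcome it; what you present in Step~2 is a restatement of the difficulty rather than a resolution. The ``conditional Taylor expansion'' $b_n\p(x<M\leq x+y/b_n\mid \mathcal F_n^{\mathrm{out}})\to y f_M(x)$ is precisely the kind of statement that fails in the Avikainen-type counterexamples you cite: $M$ is not $\mathcal F_n^{\mathrm{out}}$-measurable (it depends on the window), and there is no reason for the conditional law of $M$ given the outside to have a density bounded uniformly in $n$ and in the conditioning. Your final paragraph concedes this, so as written the proposal is a plan with its central step left open.

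The paper's route is structurally different and supplies the missing mechanism. Rather than conditioning on a random window around $\tau$, one splits at a \emph{deterministic} time $\delta>0$ and restricts first to $\{\tau\geq\delta\}$. Time-reversing the pre-$\delta$ piece converts the event $\{x<M\leq x+y/b_n,\ \overline X_\delta<M\}$ into $\{X_\delta\in(x,x+y/b_n],\ \underline X_\delta>0\}$ for a process started at the (independent) post-$\delta$ supremum level $z=M_{[\delta,1]}$. This replaces the intractable conditional density of $M$ by the density $f_{z,\delta}(x)$ of $X_\delta$ killed on going negative, which is shown (via Orey's condition implied by $\a>1$) to be bounded and jointly continuous in $(z,x)$ away from $0$. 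Lemma~\ref{lem:vehicle} together with the R\'enyi-mixing limit of $(M_{[\delta,1]},V^\n_{[\delta,1]})$ and the uniform integrability from Theorem~\ref{thm:UI} then gives the limit $f_M(x;\delta)\e\V$. The remaining contribution from $\{\tau<\delta\}$ is \emph{not} handled by letting a window shrink; instead one uses a stopping-time trick: since an ub.v.\ process hits $0$, one restarts at the first return to $0$ after time $1/2$, which converts the small-$\tau$ event into one with $\tau\geq 1/2$ on a longer interval, and the previous analysis bounds it by a quantity tending to $0$ as $\delta\downarrow 0$.

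Your Step~1 is also incomplete: regular variation of $\overline\Pi_H$ does not by itself give a continuous density for $U_H$, and the passage from $U_H$ to $f_M$ on the fixed interval $[0,1]$ is nontrivial. The paper instead obtains continuity of $f_M$ as a byproduct of the above, via the excursion-theoretic formula for $f_M(x;\delta)$ and a bound showing $f_M(x;\delta)\to f_M(x)$ uniformly on compacts.
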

	
	The intuition behind this result is explained in Section~\ref{sec:intro}. It is also noted that Theorem \ref{thm:passage} has been established for a linear Brownian motion in~\cite{broadie_glasserman_kou}, and later extended to an independent sum of a linear Brownian motion and a compound Poisson process in~\cite{dia_lamberton2011}.
	
	\begin{remark}\rm
		The result of Theorem~\ref{thm:passage} is also true for a shifted grid $(i+s)/n$ with all points in $[0,1]$.
		Furthermore, with some additional effort one can show that the limit in Theorem~\ref{thm:passage} holds uniformly in all positive levels $x$ away from $0$ and all shifts $s\in[0,1)$.
	\end{remark}
	
	\subsection{Preparatory results }
	We start by recalling a basic result concerning convergence of integrals. It follows immediately from the generalized continuous mapping theorem~\cite[Thm.\ 4.27]{kallenberg} and convergence of means~\cite[Lem.\ 4.11]{kallenberg}.
	\begin{lemma}[Convergence of integrals]\label{lem:vehicle}
		Consider a sequence of probability measures $\mu_n$ on a metric space $S$ with a weak limit $\mu$, and a sequence of measurable functions $h_n$ on $S$ such that $h_n(x_n)\to h(x)$ whenever $x_n\to x \in C\subset S$ such that $\mu(C)=1$. If, moreover,  the random variables $h_n(\mu_n)$, corresponding to the push-forward measures, are uniformly integrable then 
		\[\int h_n(x)\mu_n(\D x)\to \int h(x)\mu(\D x).\]
	\end{lemma}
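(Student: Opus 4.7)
The plan is to translate the statement into random-variable language and then stitch together the two Kallenberg results the authors already cite. Concretely, I would let $X_n$ be an $S$-valued random element with law $\mu_n$ and $X$ one with law $\mu$; the assumed weak convergence of probability measures then reads $X_n \convd X$, and the ``push-forward random variables'' $h_n(\mu_n)$ in the hypothesis are by definition the real-valued random variables $h_n(X_n)$. Under this identification the integrals in the conclusion become $\e h_n(X_n)$ and $\e h(X)$, so the lemma reduces to showing $\e h_n(X_n) \to \e h(X)$.

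For the first step I would apply the generalized continuous mapping theorem~\cite[Thm.\ 4.27]{kallenberg}. Its hypothesis is exactly the one imposed on the $h_n$: measurability together with $h_n(x_n)\to h(x)$ for every sequence $x_n\to x$ with $x$ lying in a set $C\subset S$ of $\mu$-measure one, equivalently $\p(X\in C)=1$. The conclusion of that theorem then gives $h_n(X_n)\convd h(X)$ in $\R$. The point is that one cannot invoke the usual continuous mapping theorem here, because $h_n$ is neither continuous nor fixed, so the generalized version is essential.

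For the second step I would invoke the convergence-of-means lemma~\cite[Lem.\ 4.11]{kallenberg}, which asserts that weak convergence together with uniform integrability of the approximating sequence upgrades to convergence of means and simultaneously implies integrability of the limit. Since $h_n(X_n)$ is assumed uniformly integrable, this yields $\e h_n(X_n)\to \e h(X)$ with a finite limit. Undoing the push-forward identification gives $\int h_n\,\D\mu_n\to\int h\,\D\mu$, which is the claim. There is no genuine obstacle here: the lemma is pure bookkeeping, gluing together exactly the two cited results, and the only minor points worth verifying are that the identification of push-forward variables with $h_n(X_n)$ is legitimate (which is the definition) and that $h(X)$ is automatically integrable under the UI assumption (which is part of~\cite[Lem.\ 4.11]{kallenberg}).
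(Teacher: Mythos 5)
Your proposal is correct and matches the paper exactly: the paper's proof consists precisely of invoking the generalized continuous mapping theorem \cite[Thm.\ 4.27]{kallenberg} to get $h_n(X_n)\convd h(X)$ and then the convergence-of-means result \cite[Lem.\ 4.11]{kallenberg} under uniform integrability. Nothing further is needed.
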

	
	Throughout this section we assume that 
	\begin{equation}\label{eq:assumption_passage}X\in\mathcal D_{\a,\rho}\qquad \text{ with }\a>1,\end{equation} 
	which implies the Orey's condition:
	\begin{equation}\label{eq:orey}\liminf_{\epsilon\downarrow 0}\epsilon^{\gamma-2}\left(\sigma^2+\int_{-\epsilon}^\epsilon x^2\Pi(\D x)\right) >0\end{equation}
	for some $\gamma\in(1,2]$, since the function in brackets must be $\RV_{2-\a}$ according to~\cite{ivanovs_zooming} and then one can take $\gamma\in(1,\alpha)$; $\gamma=2$ is possible only when $\sigma^2>0$.
	Therefore, $X_t$ has a smooth bounded density, say $p(t,x)$, for each $t>0$, see~\cite[Prop.\ 28.3]{sato} and ~\cite[Thm.\ 3.1]{picard}.
	Moreover, $p(t,x)$ is continuous and strictly positive  on $(0,\infty)\times \R$, see~\cite{sharpe1969zeroes}. The following Lemma~\ref{lem:bounded_density} shows that $p(t,x)$ must be bounded on any set away from the origin. We could not locate such a result in the literature, but see~\cite[Prop.\ III.6]{leandre} and \cite[Prop.\ 6.3]{figueroa} for some related results.
	\begin{lemma}\label{lem:bounded_density}
		Assume that \eqref{eq:orey} holds with $\gamma>1$.
		Then for any $\delta>0$ the function $p(t,x)$ is upper bounded for all $t>0,x\in\R$ such that $t>\delta$ or $|x|>\delta$. 
		The conclusion may fail for any $\gamma<1$.
	\end{lemma}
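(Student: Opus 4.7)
The plan is to split $\{t>\delta\}\cup\{|x|>\delta\}$ into the overlapping regions $\{t>\delta\}$ and $\{t\leq\delta,\,|x|>\delta\}$, and bound $p(t,x)$ on each by Fourier inversion, leveraging Orey's condition.

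In the region $t>\delta$, the inequality $1-\cos u\geq u^2/4$ for $|u|\leq 1$ together with the lower bound $v(\ep)\gtrsim\ep^{2-\g}$ implicit in~\eqref{eq:orey} yields $-\mathrm{Re}\,\psi(\ii\theta)\geq c|\theta|^\g$ for large $|\theta|$, hence $|\hat p(t,\theta)|\leq e^{-ct|\theta|^\g}$ for large $|\theta|$. Fourier inversion immediately gives
\[
p(t,x)\leq\tfrac{1}{2\pi}\int_\R|\hat p(t,\theta)|\D\theta=O(t^{-1/\g})\leq C\delta^{-1/\g}.
\]

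For $t\leq\delta,\,|x|>\delta$, I would decompose $X=X^{<}+X^{>}$, where $X^{>}$ is the compound Poisson process of jumps with $|y|>1$ (rate $\lambda=\overline\Pi(1)<\infty$) and $X^{<}$ is the residual L\'evy process, whose jumps are bounded by $1$ and which still satisfies~\eqref{eq:orey}. Conditioning on the number $N_t^{>}$ of big jumps,
\[
p(t,x)=e^{-\lambda t}p^{<}(t,x)+\sum_{k\geq 1}\frac{(\lambda t)^k}{k!}e^{-\lambda t}\,\e[p^{<}(t,x-S_k^{>})],
\]
where $p^{<}$ is the density of $X^{<}$ and $S_k^{>}$ is the sum of $k$ big jumps. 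The first region applied to $X^{<}$ gives $\lVert p^{<}(t,\cdot)\rVert_\infty\leq Ct^{-1/\g}$, so the $k\geq 1$ block is dominated by $(1-e^{-\lambda t})\cdot Ct^{-1/\g}\leq C\lambda t^{1-1/\g}$, uniformly bounded on $t\leq\delta$ precisely because $\g>1$. For the survival term $e^{-\lambda t}p^{<}(t,x)$, whose naive sup-norm bound blows up as $t\downarrow 0$, I would integrate by parts $K$ times in the Fourier inversion formula (justified because $\hat p^{<}$ and its derivatives decay faster than any polynomial by the Orey bound) to arrive at
\[
|x|^K p^{<}(t,x)\leq \frac{1}{2\pi}\int_\R\big|\partial_\theta^K\hat p^{<}(t,\theta)\big|\D\theta.
\]
Fa\`a di Bruno expresses $\partial_\theta^K\hat p^{<}$ as $\hat p^{<}$ times a sum over partitions $\sum jm_j=K$ of terms proportional to $t^{m}\prod_j(\psi^{(j)}(\ii\theta))^{m_j}$ with $m=\sum m_j$. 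Because $X^{<}$ has jumps bounded by $1$, each $\psi^{(j)}(\ii\theta)$ for $j\geq 2$ is bounded in $\theta$ by $\sigma^2\mathbf{1}_{j=2}+\int_{|y|<1}y^2\Pi(\D y)$, while $|\psi'(\ii\theta)|=O(|\theta|)$. Estimating each integral against the Orey bound yields terms of order $t^{m-(m_1+1)/\g}$; the worst case $m_1=K$ (whence $m=K$) requires $\g\geq 1+1/K$, while all other partitions automatically give a bounded integral as soon as $\g\geq 1$. Choosing $K=\lceil 1/(\g-1)\rceil+1$ then makes $|x|^K p^{<}(t,x)$ uniformly bounded for $t\leq\delta$, whence $p^{<}(t,x)\leq C/\delta^K$ on $|x|>\delta$.

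The main obstacle will be the combinatorial bookkeeping in the Fa\`a di Bruno expansion, particularly verifying that the single choice of $K$ simultaneously controls every partition; the key observation is that any partition with $m_1<K$ necessarily carries a compensating extra factor of $t$ (because $m\geq m_1+1$ whenever some $m_j>0$ with $j\geq 2$), which absorbs the extra $t^{-1/\g}$ from the $\theta$-integral. As for the claim that the conclusion fails for $\g<1$, I would construct a counterexample in the appendix, for instance a L\'evy process whose L\'evy density has an unbounded spike at a level $y_0>\delta$, so that $p(t,\cdot)$ inherits an unbounded spike near $y_0$ at small times.
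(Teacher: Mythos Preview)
Your argument for the main boundedness claim is essentially identical to the paper's: the same split into $\{t>\delta\}$ and $\{t\leq\delta,|x|>\delta\}$, the same decomposition into a bounded-jump part $X^{<}$ plus a compound Poisson $X^{>}$, the same control of the compound Poisson contribution via $\p(X^{>}_t\neq 0)\cdot\|p^{<}(t,\cdot)\|_\infty=\Oh(t^{1-1/\gamma})$, and the same $K$-fold integration by parts on $p^{<}$ analyzed via Fa\`a di Bruno. Your bookkeeping matches the paper's: in their notation the terms are $\theta^i t^j$ with $i<j$ or $i=j=K$, which is exactly your dichotomy $m_1<m$ versus $m_1=m=K$, and the choice $K\geq 1/(\gamma-1)$ is the same threshold.

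The one place where your proposal is noticeably weaker is the counterexample for $\gamma<1$. An ``unbounded spike in the L\'evy density at $y_0$'' is not quite the right mechanism and is too vague as stated: the L\'evy measure is fixed, so any spike it carries is bounded, and you need to explain how this produces an \emph{unbounded} spike in $p(t,\cdot)$ as $t\downarrow 0$. The paper's construction is sharper: one first invokes Picard's lower bound $\sup_x p(t,x)\geq c\,t^{-1/\gamma'}$ for some $\gamma'\in(\gamma,1)$, observes that this supremum must be attained on $[-\delta,\delta]$ (else done), and then adds an atom at~$1$ to $\Pi$. The resulting density satisfies $\sup_{|x-1|\leq\delta}p(t,x)\gtrsim t\cdot t^{-1/\gamma'}\to\infty$, since a single jump of size~$1$ occurs with probability of order~$t$ and transports the central spike out to $x\approx 1$. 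You should replace your sketch with this argument.
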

	\begin{proof}
		See Appendix~\ref{app:detection_error}.
	\end{proof}
	It is noted that the proof of Lemma~\ref{lem:bounded_density} also shows that $p(0+,x)=0$ for any $x\neq 0$, when $\gamma>1$ and that for $\gamma<1$ this does not need to be the case. 
	In the following we write $\p_z$ for the law of the shifted L\'evy process $X$ with $X_0=z$.
	
	\begin{lemma}\label{lem:density}
		Assume that~\eqref{eq:orey} holds with $\gamma>1$. Then, for any $t>0$ the measure $\p_z(X_t\in \D x,\underline X_t>0)$ has a continuous density $f_{z,t}(x)$ which is bounded and jointly continuous on $\{(x,z):x>\delta,z>0\}$ for any $\delta>0$.
	\end{lemma}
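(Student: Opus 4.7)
The plan is to express $f_{z,t}$ via a strong-Markov decomposition at the first passage time $\tau_0 = \inf\{s\ge 0 : X_s\le 0\}$. Writing
\[
\p_z(X_t\in \D x) = \p_z(X_t\in \D x,\underline X_t>0) + \p_z(X_t\in \D x,\tau_0\le t),
\]
and applying the strong Markov property at $\tau_0$, I would identify the second summand (on $\{\tau_0<t\}$; the set $\{\tau_0=t\}$ contributes nothing since there $X_t = X_{\tau_0}\le 0<x$) as $\e_z[p(t-\tau_0,\,x-X_{\tau_0});\ \tau_0<t]\,\D x$. This yields the candidate density
\[
f_{z,t}(x) = p(t,\,x-z) - \e_z\!\left[p(t-\tau_0,\,x-X_{\tau_0});\ \tau_0<t\right],
\]
with the right-hand side automatically non-negative.

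Boundedness on $\{x>\delta,\ z>0\}$ then follows directly from Lemma~\ref{lem:bounded_density}: the term $p(t,x-z)$ is bounded by the constant $C_{t/2}$ (using that $t>t/2>0$ is fixed), while on $\{\tau_0<t\}$ one has $X_{\tau_0}\le 0$, hence $x-X_{\tau_0}\ge x>\delta$, and the integrand is bounded pathwise by $C_\delta$. Therefore $f_{z,t}\le C_{t/2}+C_\delta$ uniformly on the region.

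For joint continuity at $(x_0,z_0)$, I would couple starting points by writing $X_s=z+W_s$ with $W$ a L\'evy process starting at $0$ under $\p$, so that $\tau_0 = \sigma^{(z)}:=\inf\{s:W_s\le -z\}$ and $X_{\tau_0}=z+W_{\sigma^{(z)}}$. Continuity of $p(t,\cdot)$ takes care of the unconditional term. For the expectation, uniform boundedness of the integrand and continuity of $p$ off $\{0\}\times\R$ reduce the task, via dominated convergence, to showing that for $(x_n,z_n)\to(x_0,z_0)$ in the region,
\[
\bigl(\sigma^{(z_n)},\,W_{\sigma^{(z_n)}}\bigr)\longrightarrow\bigl(\sigma^{(z_0)},\,W_{\sigma^{(z_0)}}\bigr) \quad \text{a.s.\ on } \{\sigma^{(z_0)}<t\},
\]
together with convergence of the indicators $\mathbf{1}_{\{\sigma^{(z_n)}<t\}}$ (which holds since $\{\sigma^{(z_0)}=t\}$ is a null event: under Orey's condition the distribution of $\sigma^{(z_0)}$ has no atoms).

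The main obstacle is precisely this a.s.\ continuity of the first-passage pair in the threshold $z$; I would settle it by case analysis of the behaviour of $W$ at level $-z_0$, using the automatic inequalities $W_{\sigma^{(z_0)-}}\ge -z_0\ge W_{\sigma^{(z_0)}}$. On the strict-overshoot event $W_{\sigma^{(z_0)-}}>-z_0>W_{\sigma^{(z_0)}}$, the pair is locally constant in $z$ near $z_0$, since $-z_n$ eventually lies in the gap $(W_{\sigma^{(z_0)}},W_{\sigma^{(z_0)-}})$ and the first crossing of $-z_n$ occurs at the same downward jump. On the creeping event $W_{\sigma^{(z_0)-}}=W_{\sigma^{(z_0)}}=-z_0$ the map $z\mapsto\sigma^{(z)}$ is continuous at $z_0$, and $W_{\sigma^{(z_n)}}\to -z_0$ follows from right-continuity of $W$ (for $z_n\downarrow z_0$) or from the fact that $W$ passes continuously through levels just above $-z_0$ before time $\sigma^{(z_0)}$ (for $z_n\uparrow z_0$). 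The remaining mixed cases have probability zero. Dominated convergence then yields the desired joint continuity.
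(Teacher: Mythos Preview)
Your argument is correct and follows the same strong-Markov decomposition at $\tau_0$ that the paper uses, arriving at the identical formula
\[
f_{z,t}(x)=p(t,x-z)-\int_{s\in(0,t),y\ge 0}\p_z(\tau_0\in\D s,-X_{\tau_0}\in\D y)\,p(t-s,x+y),
\]
and the boundedness step via Lemma~\ref{lem:bounded_density} is exactly as in the paper.

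The only methodological difference is in how joint continuity in $(x,z)$ is obtained. The paper simply observes that the family of measures $\p_{z_n}(\tau_0\in\D s,-X_{\tau_0}\in\D y)$ converges weakly (this is left implicit) and then invokes the abstract convergence-of-integrals Lemma~\ref{lem:vehicle}, using that $p(t-s,x_n+y)$ is bounded for $x_n+y>\delta$ and jointly continuous on $(0,\infty)\times\R$. You instead realise the weak convergence concretely via the coupling $X_s=z+W_s$ and a pathwise case analysis (strict overshoot versus creeping) to get almost-sure convergence of $(\sigma^{(z_n)},W_{\sigma^{(z_n)}})$, after which dominated convergence finishes the job. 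Your route is more explicit and self-contained; the paper's is shorter but silently uses precisely what your coupling establishes. One small point worth tightening: the assertion that ``the remaining mixed cases have probability zero'' (i.e.\ $W_{\sigma^{(z_0)}-}=-z_0>W_{\sigma^{(z_0)}}$ or $W_{\sigma^{(z_0)}-}>-z_0=W_{\sigma^{(z_0)}}$) deserves a one-line justification, e.g.\ via the compensation formula and the fact that $W_s$ has a density for every $s>0$, so that a.s.\ no jump occurs from or to the exact level~$-z_0$.
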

	\begin{proof}
		We start as in the proof of~\cite[Lemma 8]{doney_savov}.
		By the strong Markov property applied at $\tau_0=\inf\{t\geq 0:X_t<0\}$, the first time the process becomes negative, we find  that
		\begin{align*}
		&\p_z(X_t\in\D x,\underline X_t\leq 0)/\D x=\int_{s\in(0,t),y\geq 0} \p_z(\tau_0\in\D s, -X_s\in\D y)p(t-s,x+y),
		\end{align*}
		where  we use $\p_z(\underline X_t=0)=0$ and $\p_z(\tau_0=t)=0$. 
		Note that it is enough to establish that the right hand side is jointly continuous for $x> \delta,z>0$, because then
		\[f_{z,t}(x)=p(t,x-z)-\int_{s\in(0,t),y\geq 0} \p_z(\tau_0\in\D s, -X_s\in\D y)p(t-s,x+y)\]
		must be bounded and jointly continuous.
		
		For any $z_n\to z>0,x_n\to x>\delta$ we need to show that
		\begin{align*}
		\int_{s\in(0,t),y\geq 0} \p_{z_n}(\tau_0\in\D s, -X_s\in\D y)p(t-s,x_n+y)
		\end{align*}
		has the corresponding limit.
		This readily follows from Lemma~\ref{lem:vehicle}, joint continuity of $p(t,x)$ and the fact that it is bounded for all $t>0$ and $x$ away from $0$, see Lemma~\ref{lem:bounded_density}.
	\end{proof}
	
	\subsection{Proofs}
	Our proof of Theorem \ref{thm:passage} essentially consists of two parts. First, we analyze a restricted problem when $\tau$ is away from~0 in Proposition~\ref{prop:passage}, which turns out to be much simpler than the original problem. 
	The main idea here is to split the path at some small $\delta$ and to time-reverse the first piece, in order to sandwich the probability of interest between the integrals converging to the right quantity.
	The original problem is then tackled by showing that the contribution arising from $\tau<\delta$ can be neglected as $\delta\downarrow 0$, which requires various further ideas.
	\begin{proposition}\label{prop:passage}
		Assume~\eqref{eq:assumption_passage} and chose $\delta\in(0,1)$. Then $\p(M\in \D x,\tau\geq \delta)$ has a continuous density, say $f_M(x;\delta)$, and
		\[b_n\p(M>x,M^\n<x,\tau\geq \delta)\to f_M(x;\delta)\e \V\]
		as $n\to \infty$ for any $x>0$.
	\end{proposition}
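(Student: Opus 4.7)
My plan is to decompose the path at time $\delta$, time-reverse the first piece via duality, and sandwich the probability of interest between two integrals that share the common limit $f_M(x;\delta)\e\V$. For the existence of the density, note that on $\{\tau\geq\delta\}$ the supremum lies in $[\delta,1]$, so $M=X_\delta+M_Y$ with $Y_t:=X_{\delta+t}-X_\delta$ an independent copy of $X$ on $[0,1-\delta]$ and $M_Y:=\overline Y_{1-\delta}$. The time-reversed first piece $\tilde X_s:=X_\delta-X_{(\delta-s)-}$ has the law of $X$ on $[0,\delta]$ by duality, is independent of $Y$, and rewrites $\{\tau\geq\delta\}$ as $\{\underline{\tilde X}>-M_Y\}$. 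Conditioning on $M_Y=m$ and shifting $\tilde X$ up by $m$, the law of $\tilde X_\delta$ on the staying-positive event has density $f_{m,\delta}$ from Lemma~\ref{lem:density}; integrating over $M_Y$ yields $f_M(x;\delta)=\e[f_{M_Y,\delta}(x)]$, and continuity in $x$ follows from Lemma~\ref{lem:density} via dominated convergence, using $f_{z,\delta}(x)\leq p(\delta,x-z)$ together with Lemma~\ref{lem:bounded_density} to dominate uniformly in $z$.

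For the asymptotics I first assume $\delta$ is a grid point (the general case follows by perturbation since shifting $\delta$ by at most $1/n=\oh(b_n^{-1})$ alters the probability by $\oh(b_n^{-1})$ when $\a>1$). Writing $\underline{\tilde X}^{\n}$ and $M_Y^{\n}$ for the grid minimum of $\tilde X$ and grid maximum of $Y$, one has $M^\n=X_\delta+\max(-\underline{\tilde X}^{\n},M_Y^{\n})$, and the target event becomes
\[\bigl\{\tilde X_\delta\in(x-M_Y,\,x-\max(M_Y^{\n},-\underline{\tilde X}^{\n})]\bigr\}\cap\bigl\{\underline{\tilde X}>-M_Y\bigr\}.\]
For the upper bound I drop the $-\underline{\tilde X}^{\n}$ term; conditioning on $Y$ the remaining probability equals $\int_x^{x+(M_Y-M_Y^{\n})}f_{M_Y,\delta}(w)\,\D w$, and after multiplying by $b_n$ and substituting $w=x+b_n^{-1}u$ it becomes
\[\e\Bigl[\int_0^{V_Y^{\n}}f_{M_Y,\delta}(x+b_n^{-1}u)\,\D u\Bigr],\qquad V_Y^{\n}:=b_n(M_Y-M_Y^{\n}).\]
For the lower bound I additionally impose $\{-\underline{\tilde X}^{\n}\leq M_Y^{\n}\}$, which gives the same expression. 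The passage to the limit invokes Lemma~\ref{lem:vehicle} applied to $(V_Y^{\n},M_Y)$: R\'enyi-mixing of the zooming-in convergence provides $(V_Y^{\n},M_Y)\convd(\V,M_Y)$ stably with $\V$ independent of $M_Y$, Theorem~\ref{thm:UI} supplies uniform integrability of $V_Y^{\n}$, and $f_{z,\delta}(x)$ is bounded in $z\geq 0$. The common limit is $\e\V\cdot \e[f_{M_Y,\delta}(x)]=\e\V\cdot f_M(x;\delta)$.

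The main obstacle is closing the sandwich by showing that the gap term
\[\p\bigl(-\underline{\tilde X}^{\n}>M_Y^{\n},\,\tilde X_\delta\in(x-M_Y,x-M_Y^{\n}],\,\underline{\tilde X}>-M_Y\bigr)\]
is $\oh(b_n^{-1})$. Heuristically, the bad event $-\underline{\tilde X}^{\n}>M_Y^{\n}$ combined with $\underline{\tilde X}>-M_Y$ forces $\underline{\tilde X}\in(-M_Y,-M_Y^{\n})$, a window of width $\Oh(b_n^{-1})$, while the target constrains $\tilde X_\delta$ to a window of the same width; if the joint law of $(\underline{\tilde X},\tilde X_\delta)$ admits a bounded density at the relevant boundary point, the combined probability is $\Oh(b_n^{-2})$. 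Establishing such a joint density bound---an analogue of Lemma~\ref{lem:density} for the pair $(\underline{\tilde X},\tilde X_\delta)$---using the regularity of the ladder exponent (Corollary~\ref{cor:kappa}) and entrance-law bounds (Proposition~\ref{prop:bound_excursion}) is the most delicate technical step.
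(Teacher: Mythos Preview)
Your overall strategy---split at $\delta$, time-reverse the first piece, and sandwich---coincides with the paper's, and your upper bound is exactly theirs. The genuine gap is in the lower bound. You impose $\{-\underline{\tilde X}^{\n}\leq M_Y^{\n}\}$ (grid maximum over $[0,\delta)$ below grid maximum over $[\delta,1]$) and are then forced to control the complementary ``gap term'' via a joint density bound for $(\underline{\tilde X}_\delta,\tilde X_\delta)$. You do not establish this bound, and it is a nontrivial additional result that the paper neither proves nor needs.

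The paper's lower bound is simpler: instead of requiring the \emph{grid} maximum over $[0,\delta)$ to lie below $M_Y^{\n}$, require the \emph{continuous} supremum $\overline X_\delta$ to do so. This is a smaller event, but after time-reversal it just replaces the constraint $\underline X_\delta>0$ in the upper bound by $\underline X_\delta>y/b_n$ (with $y=V^{\n}_{[\delta,1]}$), equivalently shifting the starting point from $z$ to $z-y/b_n$ and the window by $y/b_n$. Since Lemma~\ref{lem:density} gives that $f_{z,\delta}(x)$ is \emph{jointly} continuous and bounded in $(z,x)$ away from $x=0$, both sandwich terms converge to the same limit $\e\V\, f_M(x;\delta)$ directly via Lemma~\ref{lem:vehicle}. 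No joint law of $(\underline{\tilde X}_\delta,\tilde X_\delta)$ enters. This also dispenses with your perturbation argument for non-grid $\delta$: the paper's bounds work for arbitrary $\delta\in(0,1)$, since ``grid points in $[\delta,1]$'' simply means $\{i/n:i/n\geq\delta\}$.

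A second omission: your invocation of Theorem~\ref{thm:UI} for uniform integrability of $V_Y^{\n}$ tacitly assumes $\int_{|x|>1}|x|\,\Pi(\D x)<\infty$, i.e.\ $\beta_\infty>1$. The paper handles $\beta_\infty\leq 1$ separately by intersecting with the event ${A^{\n}}^c$ of Proposition~\ref{prop:UI_onejump} and noting $b_n\p(A^{\n})=b_n\Oh(n^{-1})\to 0$.
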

	\begin{proof}
		The upper bound on $\p(M>x,M^\n<x,\tau\geq \delta)$ is obtained by restricting the discretization grid to the times exceeding $\delta$, see Figure~\ref{fig:prop9}. 
		\begin{figure}[h!]
			\begin{center}
				\begin{tikzpicture}
				\draw (-1.2,0.5)--(-1,0.7)--(-0.8,-0.16)--(-0.6,0.34)--(-0.4,1.92)--(-0.2,1.77)--(0,0) -- (0.2,-0.05)--(0.4,-0.29)--(0.6,-0.88)--(0.8,1.5) -- (1,0.87)--(1.2,2.5)--(1.4,1.67)--(1.6,1.32)--(1.8,0.32) --(2,0.28) -- (2.2,-0.52)--(2.4,-2.05)--(2.6,-1.93)--(2.8,-1.46) -- (3,0.18)--(3.2,-0.81)--(3.4,-0.47)--(3.6,-0.34)--(3.8,0.3);
				\draw[->,thick] (-1.2,0.5)--(4,0.5);
				\draw[->,thick] (-1.2,0)--(-1.2,2.7);
				\draw (0,0.5) node[above right]{$\delta$};
				\draw (3.8,0.5) node[above]{$1$};
				\draw[->,thick,dashed] (0,2.5)--(-1.5,2.5);
				\draw[->,thick,dashed] (0,2.5)--(0,-1);
				\draw (0,0) node[left]{$z$};
				\draw (0,1.5) node[right]{$y$};
				\draw (-0.1,1.5)--(0.1,1.5);
				\draw (-0.1,0)--(0.1,0);
				\filldraw (0.8,1.5) circle (0.05);
				\filldraw (1.2,2.5) circle (0.05);
				\draw (-0.2,1.77) circle (0.05);
				\foreach \x in {-1.2,-0.2}   \draw (\x,0.6)--(\x,0.4);
				\foreach \x in {0.8,1.8,2.8,3.8}   \draw[thick] (\x,0.6)--(\x,0.4);
				\end{tikzpicture}
			\end{center}
			\caption{Schematic sample path and the reversal}
			\label{fig:prop9}
		\end{figure}
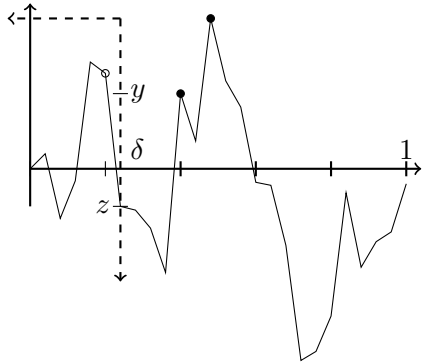
		Considering the post-$\delta$ process $X_{t+\delta}-X_\delta,t\geq 0$ (independent of $X_t,t\leq \delta$ and having the same law) and
		its functionals
		\[M_{[\delta,1]}:=\sup_{t\in[\delta,1]} X_t-X_\delta,\qquad V^\n_{[\delta,1]}/b_n:=\sup_{t\in[\delta,1]} X_t-\max_{i/n\in[\delta,1]} X_{i/n}\] we have the upper bound:
		\begin{align}\label{ali:ub}
		&\int_{z>0,y> 0} \p(M_{[\delta,1]}\in\D z,V^\n_{[\delta,1]}/b_n\in \D y)\p(x\vee \overline X_\delta<X_\delta+z<x+y)\nonumber\\
		=&\int_{z>0,y> 0} \p(M_{[\delta,1]}\in\D z,V^\n_{[\delta,1]}\in \D y)\p_z(X_\delta\in (x,x+y/b_n],\underline X_\delta>0)
		\end{align}
		where in the second line we have used the time and space reversal (yielding a process with the same law) at the time~$\delta$, see the dashed axes in Figure~\ref{fig:prop9}. 
		The lower bound is obtained by restricting to the event when the supremum over $[0,\delta)$ is smaller than the discretized maximum over $[\delta,1]$, implying that discretization epochs before $\delta$ do not matter. 
		Thus the lower bound is given by~\eqref{ali:ub} with a single change, where $\underline X_\delta>0$ is replaced by $\underline X_\delta>y/b_n$.
		
		
		According to~\cite{ivanovs_zooming} the measure $\p(M_{[\delta,1]}\in\D z,V^\n_{[\delta,1]}\in \D y)$ has the weak limit $\p(M_{[\delta,1]}\in\D z)\times \p(\V\in \D y)$, because we are discretizing the process with the same law and the limit does not depend on the time horizon neither on the grid shift.
		Moreover, for any positive $(y_n,z_n)\to(y,z)$ with $z,y>0$ the mean value theorem and Lemma~\ref{lem:density} show that 
		\[b_n\p_{z_n}(X_\delta\in (x,x+y_n/b_n],\underline X_\delta>0)=f_{z_n,\delta}(x_n)y_n\to yf_{z,\delta}(x),\]
		where $x_n\in (x,x+y_n/b_n)$. Moreover, the same limit holds true for 
		\begin{multline*}b_n\p_{z_n}(X_\delta\in (x,x+y_n/b_n],\underline X_\delta>y_n/b_n)\\=b_n\p_{z_n-y_n/b_n}(X_\delta\in (x-y_n/b_n,x],\underline X_\delta>0)\end{multline*}
		appearing in the lower bound.
		Assume $\beta_\infty>1$, see~\eqref{eq:beta}. Now Lemma~\ref{lem:vehicle} applies, because the uniform integrability of the corresponding family of measures follows from that of $V^\n_{[\delta,1]}$ and the boundedness of $f_{z,\delta}(x)$. Hence the limit of interest is 
		\[\int y f_{z,\delta}(x)\p(M_{[\delta,1]}\in\D z)\times \p(\V\in \D y)=f_M(x;\delta)\e \V,\]
		where the continuity of $\int f_{z,\delta}(x)\p(M_{[\delta,1]}\in\D z) = f_M(x;\delta)$ follows from the boundedness of $f_{z,\delta}(x)$ and the dominated convergence theorem.
		
		For $\beta_\infty\leq1$ note that $b_n\p(A^\n)=b_n\Oh(1/n)\to 0$, where the event $A^\n$ is defined in~\eqref{eq:A}. 
		Thus we may work on the event ${A^\n}^c$ for the post-$\delta$ process, and apply Proposition~\ref{prop:UI_onejump} to get uniform integrability.
	\end{proof}
	
	The following result, in particular, establishes continuity of the supremum density, see also~\cite{chaumont_malecki}.  
	\begin{lemma}\label{lem:density_sup}
		Assume \eqref{eq:assumption_passage}. Then $f_M(x):=\lim_{\delta\downarrow 0}f_M(x;\delta)$ is a density of $M$ continuous for $x>0$.
	\end{lemma}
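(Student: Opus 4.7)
The plan is to first establish the limit $f_M$ as a density of $M$ on $(0,\infty)$ by a monotone-convergence argument, and then to obtain continuity by combining automatic lower semicontinuity with an upper-semicontinuity argument resting on a locally uniform control of a remainder.

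First, I would observe that $\a>1$ combined with Figure~\ref{fig:arho} gives $\rho\in[1-1/\a,1/\a]\subset(0,1)$, so by the inheritance principle at the start of Section~\ref{sec:ladder} the process $X$ hits both $(0,\infty)$ and $(-\infty,0)$ immediately; a standard time-reversal argument then yields $\tau\in(0,1)$ almost surely. Consequently the sub-probability measures $\mu_\delta(\cdot):=\p(M\in\cdot,\tau\geq\delta)$ on $(0,\infty)$ are monotonically non-decreasing as $\delta\downarrow 0$ and converge setwise to $\p(M\in\cdot)$. By Proposition~\ref{prop:passage} each $\mu_\delta$ has a continuous density $f_M(\cdot;\delta)$, and the measure ordering $\mu_{\delta'}\leq\mu_\delta$ for $\delta'\geq\delta$ transfers to pointwise ordering of the continuous densities. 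Hence $f_M(x):=\lim_{\delta\downarrow 0}f_M(x;\delta)\in[0,\infty]$ exists, and the monotone convergence theorem gives $\int_a^b f_M(x)\,\D x=\p(M\in(a,b))$ for all $0<a<b$. This shows $f_M$ is a density of $M$ on $(0,\infty)$, in particular finite Lebesgue-a.e.

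For continuity, lower semicontinuity is automatic because $f_M=\sup_{\delta>0}f_M(\cdot;\delta)$ is a pointwise supremum of continuous functions. For upper semicontinuity I will decompose $f_M(y)=f_M(y;\delta)+g_\delta(y)$, where $g_\delta\geq 0$ is the density of $\p(M\in\cdot,\tau<\delta)$; since $M=\overline X_\delta$ on $\{\tau<\delta\}$, one has $g_\delta(y)\leq h_\delta(y)$, where $h_\delta$ is the density of $\overline X_\delta$ on $(0,\infty)$. Given $x_0>0$ and $\varepsilon>0$, the plan is to pick $\eta>0$ and then $\delta$ small enough that $\sup_{y\in[x_0-\eta,x_0+\eta]}h_\delta(y)<\varepsilon$. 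At $y=x_0$ this already forces $f_M(x_0)\leq f_M(x_0;\delta)+\varepsilon<\infty$, and for $y$ close to $x_0$ continuity of $f_M(\cdot;\delta)$ together with $f_M(x_0;\delta)\leq f_M(x_0)$ gives $\limsup_{y\to x_0}f_M(y)\leq f_M(x_0)+2\varepsilon$; sending $\varepsilon\downarrow 0$ closes the continuity argument.

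The hard part is precisely this locally uniform decay $\sup_{y\in K}h_\delta(y)\to 0$ on compacts $K\subset(0,\infty)$ as $\delta\downarrow 0$. My plan is to invoke the single-big-jump heuristic: for small $\delta$, the leading mechanism producing $\overline X_\delta\approx y$ with $y\in K$ is a single jump of size of order $y$, giving $\p(\overline X_\delta\geq y)=\Oh(\delta\,\oPi_+(y))$ uniformly on $K$, whence a matching density bound $\sup_{y\in K}h_\delta(y)=\Oh(\delta)$ should follow either by direct differentiation using a local absolute-continuity representation of $\overline X_\delta$, or via a time-reversal argument exploiting the joint continuity of the bivariate density of $(\overline X_\delta,X_\delta)$ developed in the spirit of Lemma~\ref{lem:density}.
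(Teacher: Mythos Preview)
Your first two parts are sound: the monotone-convergence argument for the density is clean and arguably more transparent than the paper's terse appeal to Chaumont's formula holding at $\delta=0$, and the LSC/USC framework is a valid way to package continuity. The genuine gap is your final paragraph. A tail bound $\p(\overline X_\delta\geq y)=\Oh(\delta)$ does not by itself yield a pointwise density bound $h_\delta(y)=\Oh(\delta)$; passing from a CDF estimate to a density estimate requires additional regularity, and neither of your two suggested routes (``direct differentiation'' or a ``time-reversal argument in the spirit of Lemma~\ref{lem:density}'') is developed enough to see how it would go. In particular, Lemma~\ref{lem:density} concerns $\p_z(X_t\in\D x,\underline X_t>0)$, not the joint law of $(\overline X_\delta,X_\delta)$, so invoking it ``in spirit'' does not supply the missing ingredient. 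You also tacitly assume $\overline X_\delta$ has a continuous density $h_\delta$, which you have not established.

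The paper proceeds differently and avoids needing a bound on $h_\delta$ altogether. It invokes Chaumont's excursion-theoretic formula
\[
f_M(x;\delta)=\int_{s\in(\delta,1),\,y>0}\underline n(X_{s/2}\in\D y)\,f_{y,s/2}(x)\,n(1-s<\zeta)\,\D s,
\]
and shows that the tail integral over $s\in(0,\delta)$ vanishes uniformly for $x\geq x_0>0$. The key inputs are the bound $f_{y,s}(x)\leq p(s,x-y)$, boundedness of $p(s,\cdot)$ away from the origin (Lemma~\ref{lem:bounded_density}), Picard's estimate $\sup_x p(s,x)=\Oh(s^{-1/\gamma})$ with $\gamma\in(1,\a)$, and Proposition~\ref{prop:bound_excursion} to control $\underline n(X_s>x_0/2)$. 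Since your $g_\delta(x)$ is precisely this tail integral, your reduction to bounding $h_\delta$ would ultimately require the same tools; the paper's route is more direct because it works with the excursion representation from the start and never needs the density of $\overline X_\delta$ as a separate object.
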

	\begin{proof}
		According to~\cite{chaumont_supremum},
		\[f_M(x;\delta)=\int_{s\in(\delta,1),y>0}\underline n(X_{s/2}\in\D y)f_{y,s/2}(x)n(1-s<\zeta)\D s,\]
		which is also true for $\delta=0$ yielding $f_M(x)$; here $\zeta$ denotes the lifetime.
		
		It is left to show that $f_M(x)$ is continuous, for which it is sufficient to establish that 
		\[\int_{s\in(0,\delta),y>0} \underline n(X_{s}\in\D y)f_{y,s}(x)n(1-2s<\zeta)\D s\to 0\]
		as $\delta\downarrow 0$ uniformly in $x\geq x_0>0$, because $f_M(x;\delta)$ is continuous according to Proposition~\ref{prop:passage}.
		Recall from the proof of Lemma~\ref{lem:density} that $f_{y,s}(x)\leq p(s,x-y)$ and the latter is bounded when $x-y$ is away from $0$, see Lemma~\ref{lem:bounded_density}. Hence it is sufficient to show that 
		\[\int_0^{\delta} \underline n(X_{s}>x_0/2)\sup_xp(s,x)\D s\to 0,\]
		where $\sup_xp(s,x)=\Oh(s^{-1/\gamma})$ with $\gamma\in(1,\a)$ according to~\cite[Thm.\ 3.1]{picard}.
		But $\underline n(X_{s}>x_0/2)$ is upper bounded by a function in $\RV_\rho$ as $s\downarrow 0$  according to Proposition~\ref{prop:bound_excursion}, implying that it is bounded for small $s$ (converges to 0) since necessarily $\rho>0$. The proof is now complete.
	\end{proof}

	\begin{proof}[Proof of Theorem~\ref{thm:passage}]
		We need to show that
		\begin{equation}\label{eq:toshow}\limsup_{n\to\infty}b_n\p(M>x,M^\n<x,\tau<\delta)< C_x(\delta),\end{equation}	
		where $C_x(\delta)\downarrow 0$ as $\delta\downarrow 0$, because then
		Proposition~\ref{prop:passage} shows that
		\begin{multline*}f_M(x;\delta)\e \V\leq\liminf b_n\p(M>x,M^\n<x)\\
		\leq \limsup b_n\p(M>x,M^\n<x)\leq f_M(x;\delta)\e \V+C_x(\delta)\end{multline*}
		implying the result with the help of Lemma~\ref{lem:density_sup}.
		
		We assume that $\beta_\infty>1$, see~\eqref{eq:beta}, since the other case can be handled in exactly the same way as in Proposition~\ref{prop:passage}.
		In order to remove the effect of shifting the grid (needed later) we observe that 
		\[\p(M>x,M^\n<x,\tau<\delta)\leq \p(M>x,\underline M^\n<x,\tau<\delta),\]
		where 
		\[\underline M^\n=\inf_{t\in[0,1/n]}\{X_{\tau-t}\vee X_{\tau-t+1/n}\}\leq M^\n\]
		with the convention that $X_t=-\infty$ if $t\notin [0,1]$. Recall from the proof of Theorem~\ref{thm:UI} that $b_n(M-\underline M^\n)$ is uniformly integrable, see also Lemma~\ref{lem:bertoin}. Moreover, it can be shown that $b_n(M-\underline M^\n)$ has a weak (R\'enyi) limit, call it $\underline \V$, which corresponds to taking the same map of the limiting process seen from the supremum, see~\cite{ivanovs_zooming}; the limiting process is composed of two independent pieces neither of which can jump at a fixed time.
		
		Consider a stopping time $\widehat\tau=\inf\{t\geq 1/2:X_t=0\}$ and note that $p=\p(\widehat\tau<3/4\wedge \tau_x)>0$, because an ub.v.\ process hits 0 immediately~\cite{bretagnolle,kesten}. For $\delta\leq 1/4$  we establish that 
		\begin{equation}\label{eq:bound_thm3}p\p(M>x,\underline M^\n<x,\tau<\delta)\leq \p(M>x,\underline M^\n<x,\tau\geq 1/2,D_\delta>x)\end{equation}
		 $D_\delta=\sup_{t\in(0,\delta],t\leq \tau-1/2}\{M-X_{\tau-t}\}$. The left hand side (by the strong Markov property) is the probability that our process hits $0$ in the interval $[1/2,3/4)$ (it has not yet crossed $x$) and in the following unit of time it achieves its supremum exceeding $x$ within $\delta$ time units, while the corresponding $\underline M^\n$ is below $x$, see Figure~\ref{fig:thm3}. It is not hard to see that this event implies the event on the right hand side ($\underline M^\n$ may become larger by means of the time interval $[0,1/2]$ but it must still be below $x$), and thus the inequality follows. It is crucial here  that the quantities do not depend on the grid shifting due to the random time~$\widehat\tau$.
		\begin{figure}[h!]
			\begin{center}
				\begin{tikzpicture}
				\draw (-1.2,0.5)--(-1,0.7)--(-0.8,-0.16)--(-0.6,0.34)--(-0.4,1.92)--(-0.2,1.77)--(0,0) -- (0.2,-0.05)--(0.4,-0.29)--(0.6,-0.88)--(0.8,1.5) -- (1,0.87)-- (1.2,0.2)--(1.4,0.5)--(1.6,2.5)--(1.8,1.67)--(2,1.32)--(2.2,1)-- (2.4,0.18)--(2.6,-0.81)--(2.8,-0.47)--(3,-0.34)--(3.2,0.3)--(3.4,1.26)--(3.6,1.45)--(3.8,1.2)--(4,0.7)--(4.2,1)--(4.4,1.15)--(4.6,1.16)--(4.8,0.17)--(5,0.5)--(5.2,0.64)--(5.4,0.9);
				\draw[->,thick] (-1.2,0.5)--(5.5,0.5);
				\draw[->,thick] (-1.2,-0.9)--(-1.2,3);
				\foreach \x in {0.8,1.4,1.8,2.8}   \draw[thick] (\x,0.6)--(\x,0.4);
				\draw (0.8,0.5) node[below]{$\frac{1}{2}$};
				\draw (1.8,0.5) node[below]{$\frac{3}{4}$};
				\draw (2.8,0.5) node[below]{$1$};
				\draw (1.4,0.5) node[below]{$\widehat \tau$};
				\draw[dashed] (-1.2,2.2)--(5.5,2.2);
				\draw[dashed] (-1.2,2.2) node[left]{$x$};
				\draw[dotted,<->] (1.4,-0.5)--(5.4,-0.5);
				\draw (3.4,-0.5) node[below]{$1$};
				\draw[dotted,<->] (1,2.5)--(1.6,2.5);
				\draw (1.3,2.5) node[above]{$\delta$};
				\end{tikzpicture}
			\end{center}
			\caption{Schematic sample path explaining the bound in~\eqref{eq:bound_thm3}}
			\label{fig:thm3}
		\end{figure}
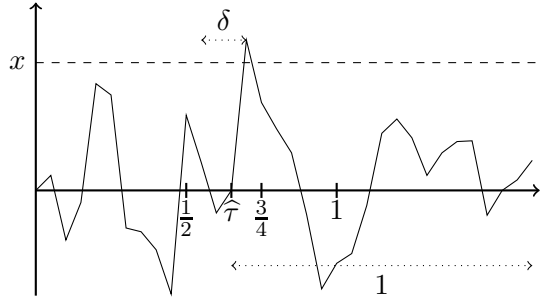
		
		The same arguments as in Proposition~\ref{prop:passage}  show that 
		\[b_n\p(M>x,\underline M^\n<x,\tau\geq 1/2,D_\delta>x)\to \e \underline \V \int f_{z,1/2}(x)\mu_{\delta,x}(\D z),\]
		where  $\mu_{\delta,x}(\D z)=\p(M\in\D z,\sup_{t\in(0,\delta],t\leq \tau}\{M-X_{\tau-t}\}>x)$.
		It is noted that now we are splitting the process at $1/2$, and the upper bound would be enough for what follows. Note also that restriction of $\underline M^\n$ to the times larger than $1/2$ makes it only smaller due to the inner maximum in its definition.
		
		Finally, since $f_{z,1/2}(x)$ is bounded for all $z>0$ according to Lemma~\ref{lem:density}, we find  that
		\[\limsup_{n\to\infty}b_n\p(M>x,M^\n<x,\tau<\delta)< C_x\p(\sup_{t\in(0,\delta],t\leq \tau}\{M-X_{\tau-t}\}>x)\]
		for some constant~$C_x$ not depending on~$\delta$. Moreover, the probability on the right hand side must decay to $0$ as $\delta\downarrow 0$,
		because ub.v.\ process does not jump at~$\tau$. The bound in~\eqref{eq:toshow} is now established and the proof is thus complete.
	\end{proof}
	
	\subsection{Further bounds and comments}
	It is noted that the above arguments may also be used to provide an asymptotic upper bound on the detection error in the case when~\eqref{eq:zooming} is not satisfied. Assuming that a non-monotone $X$ has a jointly continuous density $p(t,x)$ bounded for $|x|>\delta,t>0$ (e.g.~\eqref{eq:orey} holds with $\gamma>1$), we find with the help of Theorem~\ref{thm:moments_bound} that
	for any $\alpha_+>\alpha\vee 1$ and $\delta\in(0,1)$:
	\[\p(M>x,M^\n\leq x,\tau\geq \delta)=\Oh(n^{-1/\a_+})\qquad\text{ as }n\to\infty.\]
	Furthermore, we may strengthen this bound  to $\Oh(n^{-1/2})$ when $\a=2$, and to $\Oh(n^{-1})$ when $X$ is b.v.\ process, see Theorem~\ref{thm:moments_bound}.
	
	Moreover, we may also take $\delta=0$ apart from b.v.\ case when (i) $\gamma'=0$ or (ii) point $0$ is not in the support of~$\Pi(\D x)$, because in these cases the trick in the proof of Theorem~\ref{thm:passage} does not apply; there may be other ways to establish such bounds for these processes, however.
	
	There is also an asymptotic lower bound of order $n^{-1}$ on the detection error under some minor conditions. We omit the analysis of one-sided processes and state the following:
	\begin{lemma}\label{lem:bound_jumps}
		Assume that $X$ has jumps of both signs. Then
		\[\liminf_{n\to\infty} n\p(M>x,M^\n\leq x)>0.\]
	\end{lemma}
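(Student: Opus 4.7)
The idea is to construct, with probability of order $n^{-1}$, an explicit ``hidden'' upward excursion of $X$ across level $x$: a positive jump immediately followed by a larger-magnitude negative jump, both occurring inside a single grid cell of length $1/n$. Since there are $n$ cells and the Poisson probability of such a pair in a prescribed cell is of order $n^{-2}$, the total is of order $n^{-1}$.

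\textbf{Construction.} Because $\Pi$ charges both half-lines, pick $0<2a_+\le a_-$ with $c_+:=\Pi([a_+,2a_+])>0$ and $c_-:=\Pi([-2a_-,-a_-])>0$; the constraint $2a_+\le a_-$ is arrangeable because $\Pi((0,\eta])>0$ for every small $\eta$, and it forces $\Delta_++\Delta_-\le 0$ for any $\Delta_+\in[a_+,2a_+]$, $\Delta_-\in[-2a_-,-a_-]$. Decompose $X=\tilde X+J$ into independent L\'evy processes, with $J$ the compound Poisson of L\'evy measure $\Pi(\cdot\cap([a_+,2a_+]\cup[-2a_-,-a_-]))$. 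For $0\le i\le n-1$ let $B_n^{(i)}$ be the event that $J$ has exactly one positive jump $\Delta_+$ at time $T_+\in I_n^{(i)}:=((i+\tfrac14)/n,(i+\tfrac12)/n)$ and exactly one negative jump $\Delta_-$ at time $T_-\in(T_+,(i+\tfrac34)/n)$, with no other jumps of $J$ in $[0,1]$. A direct Poisson computation gives $\p(B_n^{(i)})\ge c_1 n^{-2}$ uniformly in $i$, and these events are disjoint in $i$. On $B_n^{(i)}$ one has $J_t=0$ for $t<T_+$, $J_t=\Delta_+\ge a_+$ for $t\in[T_+,T_-)$, and $J_t\le 0$ for $t\ge T_-$; hence $X_{T_+}\ge\tilde X_{T_+}+a_+$ and $X_{k/n}\le\tilde X_{k/n}$ at every grid point, so
\[B_n^{(i)}\cap\{\tilde X_{T_+}>x-a_+\}\cap\{\sup_{t\in[0,1]}\tilde X_t\le x\}\ \subseteq\ \{M>x,\,M^\n\le x\}.\]

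\textbf{Assembly and main obstacle.} Using $\tilde X\perp J$ together with the explicit joint Poisson density of $(T_+,T_-)$ on $B_n^{(i)}$, Fubini and the disjointness of the $B_n^{(i)}$ yield, for all sufficiently large $n$,
\[\p(M>x,M^\n\le x)\ \ge\ \frac{c_2}{n}\,\e\bigl[\,\mathrm{Leb}\{t\in[0,1]:\tilde X_t>x-a_+\}\cdot\ind{\sup_{t\in[0,1]}\tilde X_t\le x}\,\bigr].\]
The remaining step is to show that the expectation above is strictly positive, and this is the main technical point. It reduces to showing $\p(x-a_+<\sup_{t\in[0,1]}\tilde X_t\le x)>0$, because on such an event path regularity of $\tilde X$ produces a right-neighborhood of the hitting time of $x-a_+$ on which $\tilde X$ exceeds this level, giving positive Lebesgue occupation. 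Positivity of the supremum-in-interval probability is in turn secured by the freedom to choose $a_+$: the law of $\sup_{t\in[0,1]}\tilde X_t$ has at most countably many atoms, and since $\Pi((0,\infty))>0$ provides a continuum of admissible values of $a_+$, one can choose $a_+$ so that $(x-a_+,x]$ carries strictly positive sup-mass.
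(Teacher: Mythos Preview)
Your construction has a genuine gap at the very first step. You claim that ``the constraint $2a_+\le a_-$ is arrangeable because $\Pi((0,\eta])>0$ for every small $\eta$,'' but this is simply false: the lemma is stated for \emph{any} L\'evy process with two-sided jumps, and nothing prevents $\Pi$ from being a compound Poisson measure supported away from zero. Take, for instance, $\Pi=\delta_{+5}+\delta_{-1}$. Then every admissible $a_+$ satisfies $5\in[a_+,2a_+]$, so $a_+\ge 5/2$, while every admissible $a_-$ satisfies $1\in[a_-,2a_-]$, so $a_-\le 1$; the inequality $2a_+\le a_-$ is impossible. In this regime your whole mechanism collapses, because you rely on $J_t\le 0$ for $t\ge T_-$ to conclude $X_{k/n}\le\tilde X_{k/n}$ at all grid points. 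The final positivity step is also shaky: $\tilde X$ depends on the choice of $a_+$, so the ``countably many atoms, continuum of $a_+$'' argument does not go through as written.

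The paper's route avoids this size-comparison trap by decoupling the up-crossing from the down-jump. Instead of manufacturing the excursion with a prescribed up-jump, it looks at the first passage time $\tau_x$ and the overshoot $X_{\tau_x}-x$, and uses the compensation formula to show that with positive probability the overshoot is smaller than $\delta/2$ for a $\delta$ with $\overline\Pi_-(\delta)>0$; then a single negative jump of size exceeding $\delta$ in the remaining half of the grid cell pulls the process back below $x$. Crucially, the small-overshoot event does not require small positive jumps: it only requires that the pre-jump position $X_{t-}$ can be placed in $(x-y,x-y+\delta/2)$ for \emph{some} $y$ in the support of $\Pi|_{(0,\infty)}$, which the paper gets from support considerations on $\p(\overline X_t\le x,X_t\in\cdot)$. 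Even so, the paper has to treat the case where $0$ is not in the support of $\Pi$ (drifted compound Poisson) separately; your approach would need an analogous case split, and for the $+5/-1$ example above one is forced to let the process first drift down (via several $-1$ jumps spread over many cells) to a level from which a single $+5$ followed by a single $-1$ in one cell produces a hidden excursion---a genuinely different construction from the one you propose.
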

	\begin{proof}
		Let $\delta>0$ be a point of continuity of~$\overline \Pi_-(x)$ such that $\overline \Pi_-(\delta)>0$. Then $\p(X_t<-\delta)/t\to \overline \Pi_-(\delta)>0$ as $t\downarrow 0$, see e.g.~\cite{figueroa2008small}.
		Now consider a lower bound
		\begin{align*}&\p(M>x,M^\n\leq x)\\
		&\geq \p(\tau_x<1,X_{\tau_x}-x<\delta/2,\{\tau_x n\}<1/2,\sup_{t\in[\tau_x+1/(2n),1]}X_t<x)\\
		&\geq  \p(\tau_x<1,X_{\tau_x}-x<\delta/2,\{\tau_x n\}<1/2)\p(X_{1/(2n)}<-\delta)\p(M<\delta/2).
		\end{align*}
		Hence it is left to show that 
		\begin{equation}\label{eq:xxx}\liminf_{n\to\infty}\p(\tau_x<1,X_{\tau_x}-x<\delta/2,\{\tau_x n\}<1/2)>0.\end{equation}
		
		The compensation formula applied to the Poisson point process of jumps with intensity $\D t\times \Pi(\D y)$, see also~\cite{doney_kyprianou}, yields  
		\begin{multline}\p(\tau_x\in A,X_{\tau_x}\in(x,x+\delta/2))\\=\int_A \int_0^\infty\p(\overline X_{t}\leq x,X_{t}+y\in(x,x+\delta/2))\Pi(\D y)\D t\label{eq:compensation}\end{multline}
		showing that the corresponding measure is absolutely continuous.
		Hence $\{\tau_x n\}$ converges  weakly to a uniform random variable on the event $\tau_x<1,X_{\tau_x}\in(x,x+\delta/2)$; here we ignore the possibility of creeping over~$x$.
		Hence~\eqref{eq:xxx} is lower bounded by
		\[\p(\tau_x<1,X_{\tau_x}\in(x,x+\delta/2))/2,\]
		and it is left to show that this quantity is non-zero. Assume that $0$ belongs to the support of $\Pi$. Then using the ideas from~\cite[\S 24]{sato} we find that the support of $\p(\overline X_{t}\leq x,X_{t}\in \D x)$ is given by $(-\infty,x]$, and the positivity easily follows from~\eqref{eq:compensation}.
		The case when $0$ is not in the support of $\Pi(\D x)$ and there is no Brownian component corresponds to a possibly drifted compound Poisson.
		In view of Theorem~\ref{thm:passage} it is left to consider the latter, and in this case the statement follows from tedious but trivial considerations.
	\end{proof}

	\section*{Acknowledgments}
	We would like to thank Andreas Basse-O'Connor, Sonja Cox, Guido Lagos and Mark Podolskij for motivating discussions, and  
	Victor Rivero for letting us know about~\cite{doney_rivero}.
	The support of the grant
	`Ambit fields: probabilistic properties and statistical inference'  from Villum Foundation is greatfully acknowledged.
	The research of Krzysztof Bisewski is funded by the Netherlands Organisation for Scientific Research (NWO) through the research programme ‘Mathematics of Planet Earth’, grant number 657.014.003.

	\appendix
	
	\section{Remaining proofs}\label{app:proofs}
	\subsection{Proofs for Section~\ref{sec:prelim}}\label{app:s_prelim}
	We will need the following Karamata's theorem in the boundary case, see \cite[Thm.\ 1.5.9a--b]{bingham_regular}:
	\begin{lemma}\label{lem:karamata}
		Let $\ell(x)\in\RV_{0}$ (as $x\downarrow 0$), such that $\int_x^1\ell(t)\D t<\infty$ for any $x\in(0,1)$.
		\begin{itemize}
			\item[(i)]	Then $\int_x^1 t^{-1}\ell(t) \D t/\ell(x)\to\infty$ and the numerator is $\RV_0$.
			\item[(ii)] If $\int_0^1 t^{-1}\ell(t)\D t<\infty$ then $\int_0^x t^{-1}\ell(t) \D t/\ell(x)\to\infty$ and the numerator is $\RV_0$.
		\end{itemize}
	\end{lemma}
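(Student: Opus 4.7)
The plan is to deduce both parts from the Uniform Convergence Theorem (UCT) for slowly varying functions at $0$: if $\ell\in\RV_0$ as $x\downarrow 0$, then $\ell(\lambda x)/\ell(x)\to 1$ uniformly for $\lambda$ in any compact subset of $(0,\infty)$. Note that $\int_x^1 t^{-1}\ell(t)\,\D t\le x^{-1}\int_x^1\ell(t)\,\D t<\infty$, so all integrals are well defined.

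For part (i), set $N(x):=\int_x^1 t^{-1}\ell(t)\,\D t$. To prove divergence of $N(x)/\ell(x)$, I would fix $M>1$ and, for $x$ small enough that $Mx<1$, substitute $t=ux$ to get
\[
N(x)\ge \int_x^{Mx}t^{-1}\ell(t)\,\D t=\ell(x)\int_1^M u^{-1}\frac{\ell(ux)}{\ell(x)}\,\D u.
\]
By UCT the inner integral converges to $\int_1^M u^{-1}\,\D u=\log M$, so $\liminf_{x\downarrow 0}N(x)/\ell(x)\ge\log M$; since $M$ is arbitrary, the ratio tends to $\infty$. To see $N\in\RV_0$, take any $c>0$, and assume $c\in(0,1)$ (the case $c>1$ is symmetric). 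Then
\[
N(cx)-N(x)=\int_{cx}^{x}t^{-1}\ell(t)\,\D t=\ell(x)\int_c^1 u^{-1}\frac{\ell(ux)}{\ell(x)}\,\D u\longrightarrow \ell(x)\log(1/c),
\]
again by UCT on $[c,1]$. Combining this with the already-established fact that $N(x)/\ell(x)\to\infty$ gives $[N(cx)-N(x)]/N(x)\to 0$, i.e.\ $N(cx)/N(x)\to 1$.

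Part (ii) follows the same template with $N(x):=\int_0^x t^{-1}\ell(t)\,\D t$, which is finite by assumption. For divergence I would fix $M>1$ and bound
\[
N(x)\ge \int_{x/M}^{x}t^{-1}\ell(t)\,\D t=\ell(x)\int_{1/M}^1 u^{-1}\frac{\ell(ux)}{\ell(x)}\,\D u\longrightarrow \ell(x)\log M,
\]
so $N(x)/\ell(x)\to\infty$. For the $\RV_0$ property and $c\in(0,1)$, write $N(x)-N(cx)=\int_{cx}^x t^{-1}\ell(t)\,\D t\sim \ell(x)\log(1/c)$, which is $o(N(x))$ by the previous step, yielding $N(cx)/N(x)\to 1$.

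I do not anticipate any serious obstacle: this is the classical Karamata boundary result, and the only point requiring any care is to apply UCT only on compact intervals such as $[1,M]$ or $[c,1]$ with $M,c$ fixed, where it is valid. The two steps of each part (divergence, then regular variation) play off one another in a standard way, with the first supplying the lower bound that renders the boundary remainder $\ell(x)\log(1/c)$ negligible in the second.
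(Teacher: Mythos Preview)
The paper does not supply its own proof of this lemma; it simply cites \cite[Thm.\ 1.5.9a--b]{bingham_regular} and states the result. Your argument is correct and is in fact the standard proof one finds in Bingham--Goldie--Teugels: the UCT on a compact interval $[1,M]$ (resp.\ $[1/M,1]$) gives the lower bound $\log M$ for the ratio, and then the same substitution on $[c,1]$ shows the increment $N(cx)-N(x)$ is asymptotically $\ell(x)\log(1/c)=o(N(x))$. One cosmetic point: writing ``$\to \ell(x)\log(1/c)$'' is imprecise since $\ell(x)$ varies with $x$; what you mean (and what makes the argument work) is $N(cx)-N(x)\sim \ell(x)\log(1/c)$, which then combines with $\ell(x)/N(x)\to 0$ to give $[N(cx)-N(x)]/N(x)\to 0$.
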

	%

	\begin{proof}[Proof of Proposition \ref{prop:zoom_sufficient}]
		Let us verify the conditions of~\cite[Thm.\ 2]{ivanovs_zooming}, which is trivial in the case $\a\in(0,1)\cup(1,2)$. 
		In case $\a=2$ our assumption implies that $\overline\Pi_-(x)\leq C\overline \Pi_+(x)$ for some finite $C$ and all $x$ small enough. Hence for such $x$ we have
		\[\frac{x^2\overline\Pi(x)}{v(x)}\leq (C+1)\frac{x^2\overline \Pi_+(x)}{\int_0^xy^2\Pi(\D y)}\]
		and it is left to show that this fraction converges to~0. Letting $\ell(x)=x^2\overline\Pi_+(x)\in\RV_0$ we find using integration by parts that 
		\[\int_0^xy^2\Pi(\D y)=-\int_0^xy^2\D\overline\Pi_+(y)=2\int_0^xy^{-1}\ell(y)\D y-\ell(x)+\ell(0+)\]
		showing that $\ell(0+)$ is convergent. According to Lemma~\ref{lem:karamata}(ii) we must have $\ell(0)=0$ and the above must explode when divided by $\ell(x)$ as $x\downarrow 0$. The proof in the case $\a=2$ is now complete.
		
		In the case $\a=1$ we need to show that $m(x)/(x\overline\Pi(x))\to \pm\infty$. Our assumption implies that $\overline \Pi_+(x)-\overline \Pi_-(x)>c\overline\Pi_+(x)$ for some $c>0$ and all small enough $x$, as well as $\overline\Pi(x)\leq 2\overline\Pi_+(x)$.
		We let $\ell_\pm(x)=x\overline \Pi_\pm(x)$ and first consider b.v.\ case. As above, observe that $\ell_+(0)=0$ and thus also $\ell_-(0)=0$. Now
		\begin{align*}
		m(x)&=-\int_0^x y\D\overline\Pi_+(y)+\int_0^x y\D\overline\Pi_-(y)\\
		&=\int_0^x y^{-1}(\ell_+(y)-\ell_-(y))\D y-(\ell_+(x)-\ell_-(x))\\
		&>c\int_0^x y^{-1}\ell_+(y)\D y-\ell_+(x)
		\end{align*}
		for small enough~$x$.
		Hence for small $x$
		\[\frac{m(x)}{x\overline\Pi(x)}>\frac{c\int_0^x y^{-1}\ell_+(y)\D y}{2\ell_+(x)}-1/2\to \infty\]
		according to Lemma~\ref{lem:karamata}(ii). This shows that~\eqref{eq:zooming} holds true with the limit process being a positive drift.
		
		In ub.v.\ case we have 
		\[\int_x^1y\Pi(\D y)=\int_x^1y^{-1}\ell(y)\D y-\ell(1)+\ell(x)\to \infty\]
		showing that $\int_x^1y^{-1}\ell(y)\D y\to\infty$, see Lemma~\ref{lem:karamata}(i).
		Now
		\begin{align*}
		m(x)&=\gamma-\int_x^1 y^{-1}(\ell_+(y)-\ell_-(y))\D y+(\ell_+(1)-\ell_-(1))-(\ell_+(x)-\ell_-(x))\\
		&<c'-c\int_x^1 y^{-1}\ell_+(y)\D y<-\frac{c}{2}\int_x^1 y^{-1}\ell_+(y)\D y
		\end{align*}
		for some constant $c'$ and small enough $x$, where the last line is implied by the divergence of the integral. Using Lemma~\ref{lem:karamata}(i) once again we find
		\[\frac{m(x)}{x\overline\Pi(x)}<-\frac{c/2\int_x^1 y^{-1}\ell_+(y)\D y}{2\ell_+(x)}\to -\infty\]
		and the proof is complete.
	\end{proof}
	
	\subsection{Proofs for Section~\ref{sec:ladder}}\label{app:s_ladder}
	
	\begin{lemma}\label{lem:index1}
		If $X$ is a driftless b.v.\ process attracted to a liner drift process under zooming-in then $\overline\Pi(\ep)\in\RV_{-1}$ as $\ep\downarrow 0$.
	\end{lemma}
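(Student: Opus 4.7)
The plan is to extract regular variation from the full Fourier-analytic content of the zooming-in assumption, not merely from the index classification \eqref{eq:alpha}. That classification, combined with $X$ being b.v.\ driftless and attracted to a linear drift (hence $\alpha=1$ by the $1/\alpha$-self-similarity of the limit), immediately forces the Blumenthal--Getoor index $\beta_0=1$; but $\beta_0=1$ alone is too weak to imply $\overline\Pi\in\RV_{-1}$ (one can easily cook up b.v.\ measures with $\beta_0=1$ whose tails oscillate), so the proof must leverage the full convergence of the Laplace/Fourier exponents.

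First I would rewrite the assumption $X_\varepsilon/a_\varepsilon\convd d$ with $a_\varepsilon\in\RV_1$ as the pointwise limit $\varepsilon\psi(\ii\lambda/a_\varepsilon)\to \ii d\lambda$ for every $\lambda\in\R$, using the simplified b.v.\ expression $\psi(\ii\lambda)=\int(\ee^{\ii\lambda y}-1)\Pi(\D y)$. Since $\overline\Pi\in L^1_{\rm loc}$ by the b.v.\ hypothesis, integration by parts turns this into asymptotics, as $\lambda/a_\varepsilon\to\infty$, of the cosine transform $\int_0^\infty\cos(\lambda y/a_\varepsilon)(\overline\Pi_+(y)-\overline\Pi_-(y))\,\D y$ coming from the imaginary part, and of the sine/cosine transforms of $\overline\Pi_+(y)+\overline\Pi_-(y)$ coming from the real part. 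Writing $U(x):=\int_0^x\overline\Pi(y)\,\D y$, Karamata/Tauberian theory applied to the real part forces $U$ to be slowly varying at $0$. Monotonicity of $\overline\Pi$ together with a monotone density / de Haan-type argument should then upgrade this to $\overline\Pi\in\RV_{-1}$.

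The main obstacle is precisely this last upgrade: we are at the boundary index $-1$ where slow variation of $U$ does not, by the classical monotone density theorem, automatically force $\overline\Pi(x)\sim c\, U(x)/x$. I expect one must combine the monotonicity with the independent sine-transform information furnished by the imaginary part of $\psi$ to pin down the rate at this critical index, possibly by splitting $\Pi_+$ and $\Pi_-$ and handling each tail separately. Everything else in the argument — translating the process-level convergence into an exponent-level one, the integration-by-parts step, and the Karamata Tauberian step at index $0$ for $U$ — is routine.
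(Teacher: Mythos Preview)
Your route is not the paper's. The paper does not touch the characteristic exponent at all: it quotes from the proof of \cite[Cor.~1]{ivanovs_zooming} that $M(x):=-\int_0^x y\,\D\overline\Pi(y)\in\RV_0$, writes via integration by parts
\[
\overline\Pi(x)=\int_x^\infty y^{-1}\,\D M(y)=\int_x^\infty y^{-2}M(y)\,\D y-x^{-1}M(x),
\]
and invokes Karamata to conclude that the right-hand side lies in $\RV_{-1}$. So the paper works with the truncated absolute moment $M$ rather than your $U(x)=\int_0^x\overline\Pi$ (these are related by $U(x)=M(x)+x\overline\Pi(x)$), and replaces your entire Fourier--Tauberian programme by a one-line citation.

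Your worry about the boundary index is the right worry, and your proposed fix is not one: for a driftless subordinator the L\'evy exponent is real, so there is no ``independent sine-transform information'' to exploit, and monotonicity alone cannot upgrade $U\in\RV_0$ to $\overline\Pi\in\RV_{-1}$. The paper's shortcut does not escape this either. Karamata gives $\int_x^\infty y^{-2}M(y)\,\D y\sim x^{-1}M(x)$, so the displayed identity expresses $\overline\Pi$ as the difference of two asymptotically equivalent $\RV_{-1}$ functions, and the paper's assertion that ``hence also the sum must be in $\RV_{-1}$'' is precisely the step you flagged as unjustified. Indeed, a driftless subordinator with atoms at $y_k=e^{-2^k}$ of mass $\mu_k=e^{2^k}/k^2$ has $M\in\RV_0$ and is attracted to a positive drift (one checks $\Phi(q)/q\sim(\log_2\log q)^{-1}$), yet $\overline\Pi$ is constant on each plateau $[y_{k+1},y_k)$ and therefore not in $\RV_{-1}$. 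The gap you identified is genuine and is shared by the paper's own argument.
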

	\begin{proof}
		From the proof of~\cite[Cor.\ 1]{ivanovs_zooming} we see that it must be that $M(x)=-\int_0^x y\D \overline\Pi(y)\in\RV_0$.
		Using integration by parts we get
		\[\overline\Pi(x)=\int_x^\infty x^{-1}\D M(x)=\int_x^\infty y^{-2}M(y)\D y-x^{-1}M(x).\] 
		By Karamata's theorem the last integral is in $\RV_{-1}$, and hence also the sum must be in $\RV_{-1}$.
	\end{proof}
	
	\begin{proof}[Proof of Proposition~\ref{prop:sup_tail_1}]
		First we show that when $X$ is a subordinator, then $\p(\overline X_1 \leq x)$ decays faster than any power of $x$. For that, see
		\begin{align*}
		& \p(X_1\leq x) \leq \p(X_{1/n}\leq x, X_{2/n}-X_{1/n}\leq x, \ldots, X_{n/n} - X_{(n-1)/n} \leq x) \\
		& \qquad = (1 - \p(X_{1/n} > x))^n = \Big[1 - \frac{n\p(X_{1/n} > x)}{n}\Big]^n \to e^{-\overline\Pi_+(x)},
		\end{align*}
		where the last limit holds for $x$, points of continuity of $\overline\Pi_+(\cdot)$, see e.g. \cite{figueroa2008small}. 
		We may assume that $\gamma'=0$ since otherwise the statement is obvious. Now $\Pi_+(x)\in\RV_{-\a}$, see also Lemma~\ref{lem:index1}, and we conclude that $\p(X_1\leq x)$ decays faster than any power.
		
		Now we proceed with the proof of the lower bound. From~\cite[Eq.\ (3.7)]{suprema_bounds} we have the inequality $\p(\overline X_t\leq x)\geq c_tU[0,x]$ with $c_t>0$, given that $\int_0 \kappa(1/s, 0)\D s<\infty$ and either  (i) $t$ is small enough or (ii) $X$ drifts to $-\infty$ implying $\kappa(0,0)>0$.
		Corollary~\ref{cor:kappa} and the assumption $\rho<1$ show that the integral is indeed convergent. Thus according to Proposition~\ref{prop:sup_tail} we have $\p(\overline X_t\leq x)\geq \RV_{\a\rho}$ when either (i) or (ii) hold. We carry out the proof in three distinct cases
		
		{\it Case 1.} $\Pi(\R_-)>0$ and $\rho<1$. Choose $h>0$ such that $\overline \Pi_-(h)=\lambda>0$. Let $t$ be small enough so that the above bound is true for the process $X^h$ with negative jumps exceeding $h$ in absolute value removed ($X^h$ belongs to the same class $\mathcal D_{\a,\rho}$ as $X$, see~\cite[Lem.\ 3]{ivanovs_zooming}). Now by requiring that a big negative jump occurs before $t$ we get a lower bound:
		\[\p(\overline X_1\leq x)\geq \p(\overline X^h_{e_\lambda}\leq x,e_\lambda<t)\p(\overline X_1\leq h)\geq \p(\overline X^h_t\leq x) c\]
		with $c>0$, where $e_\lambda$ is the first time of the big negative jump. The result now follows when $\rho\neq 1$.
		
		{\it Case 2.} $\Pi(\R_-)>0$ and $\rho=1$. Note that $\p(\overline X^h_\ep/a_\ep\leq c)\to \p(\overline\X_1\leq c)>0$ for $c>0$ large enough (here large $c$ is needed in case $\X$ is a drift process). 
		Take the asymptotic inverse $a_u^{-1}\in\RV_{\a}$ of $\a_\ep$~\cite[Thm.\ 1.5.12]{bingham_regular} to find that $\p(\overline X^h_{a^{-1}_{x/c}}\leq x)$ has a positive limit.  It is left to require a jump of size $<-h$ in the time interval $(0,a^{-1}_{x/c})$ to get a lower bound (up to a positive constant). But the probability of such jump is $\lambda a^{-1}_{x/c}\in\RV_{\a}$ as claimed.
		
		{\it Case 3.} $\Pi(\R_-)=0$. Assume that $\overline \Pi_+(h)=\lambda>0$ for some $h>0$, and let $X^h$ be the process with positive jumps exceeding $h$ removed.
		We have the lower bound 
		\[\p(\overline X_1\leq x)\geq \p(\overline X^h_1\leq x,e_\lambda>1)=\p(\overline X^h_1\leq x)e^{-\lambda},\]
		which is $\RV_{\a\rho}$ when $X^h_t\to-\infty$ as $t\to\infty$. 
		That is, we need to ensure that $\e X^h_1=\gamma-\int_h^1 x\Pi(\D x)<0$ with $h<1$.
		This is always possible when $\int_0^1 x\Pi(\D x)=\infty$, and is also possible when $\int_0^1 x\Pi(\D x)<\infty$ and $\gamma'<0$.
		
		It is left to consider an independent sum of a linear Brownian motion and a pure jump subordinator.
		We may also represent such a process as $W_t+Y_t$ with a linear Brownian motion $W$ and a bounded variation process $Y$ with linear drift $\gamma_Y'<0$.
		Clearly, 
		\[\p(\overline X_1\leq x)\geq \p(\overline W_1\leq x/2)\p(\overline Y_1\leq x/2)\]
		where $\p(\overline Y_1\leq x/2)$ has a positive limit. Thus we only need to show that $\p(\overline W_1\leq x)=\p(\tau^W_x>1)\geq\RV_1$, where $\tau_x^W$ is an inverse Gaussian subordinator. But this is immediate and the proof is now complete.
	\end{proof}
	
	\begin{proof}[Proof of Proposition~\ref{prop:bound_excursion}]
		Using the strong Markov property we get for any $s\in(0,\ep)$
		\begin{align*}\underline n(X_\ep>\delta) & =\int_0^\infty \underline n(X_s\in\D x)\p(X_{\ep-s}>\delta-x,\underline X_{\ep-s}> -x)\\
		&\leq \underline n(X_s>\delta/2)+\underline n(X_s\leq \delta/2 )\p(X_{\ep-s}>\delta/2)\\
		&\leq \underline n(X_s>\delta/2)+\underline n(s<\zeta)\p(\overline X_{\ep}>\delta/2).
		\end{align*}
		Using~\cite[Thm.\ 6]{chaumont_supremum} and the above lower bound on $\underline n(X_s>\delta/2)$ we find
		\begin{align*}&\p(\overline X_\ep>\delta/2)=\int_0^\ep \underline n(X_s>\delta/2)n(\ep-s<\zeta) \D s\\
		&\geq \underline n(X_\ep>\delta)\int_0^\ep n(\ep-s<\zeta) \D s-\p(\overline X_\ep>\delta/2)\int_0^\ep \underline n(s<\zeta)n(\ep-s<\zeta) \D s,
		\end{align*}
		where the latter integral equals to~1, and thus we have
		\[\underline n(X_\ep>\delta)\int_0^\ep n(s<\zeta) \D s\leq 2\p(\overline X_\ep>\delta/2).\]
		But $\int_0^\ep n(s<\zeta) \D s\in\RV_{1-\rho}$,
		and so it is left to observe that $\p(\overline X_\ep>\delta/2)=\Oh(\ep)$, which readily follows from Doob-Kolmogorov inequality.

		Similarly, we have
		\[\underline n(X_\ep>\delta)\geq \underline n(X_s>2\delta)\p(\underline X_\ep>-\delta)\]
		and then also
		\[\p(\overline X_\ep>2\delta)\leq\underline n(X_\ep>\delta)\int_0^\ep n(\ep-s<\zeta) \D s/\p(\underline X_\ep>-\delta)\]
		yielding the lower bound, since $\p(\overline X_\ep>2\delta)\geq \p(X_\ep>2\delta)\sim \ep\overline\Pi_+(2\delta)$.
	\end{proof}
	
	\subsection{Proofs for Section~\ref{sec:moments}}\label{app:s_moments}
	
	Recall that $(\gamma^\n,\sigma^\n,\Pi^\n)$ is the L\'evy triplet of the rescaled process $b_nX_{t/n}$ as defined in~\eqref{eq:n_triplet}.
	\begin{proof}[Proof of Lemma~\ref{lem:levyp_conv}]
		Using $\Pi^\n(\D x)=\Pi(b_n^{-1}\D x)/n$ we find that 
		\[\int_1^\infty x^p\Pi^\n(\D x)=\frac{b_n^p}{n}\int_{b_n^{-1}}^\infty x^p\Pi(\D x).\]
		Since $b_n$ is regularly varying at $\infty$ with index $1/\a$ we see that $b_n^p/n\to 0$ and so it is sufficient to consider the limit of  
		\[\frac{b_n^p}{n}\int_{b_n^{-1}}^1 x^p\Pi(\D x)=\frac{\ep}{a_\ep^p}\int_{a_\ep}^1 x^p\Pi(\D x),\]
		where $\ep=1/n\downarrow 0$.		
		Recall also the definitions of $m(x)$ and $v(x)$, the truncated mean and variance functions, given in~\eqref{def:trunc_mv}.

		
		Consider the case where $\X$ is a Brownian motion, so that $p<2$. 
		From~\cite[Thm.\ 6 (i)]{ivanovs_zooming} we see that $v\in\RV_0$ and $\ep v(a_\ep)/a_\ep^2\to\widehat \sigma^2$. 
		Hence 
		\[\frac{\ep}{a_\ep^p}\int_{a_\ep}^1 x^p\Pi(\D x)\leq\frac{\ep}{a_\ep^p}\int_{a_\ep}^1 x^{p-2} \D v(x)=\frac{\ep v(a_\ep)}{a_\ep^2} \cdot \tfrac{\int_{a_\ep}^1 x^{p-2} \D v(x)}{a_\ep^{p-2}v(a_\ep)}\to 0,\]
		because the first ratio converges to $\widehat\sigma^2$ and the second to 0 according to the Karamata's theorem, see~\cite{bingham_regular} or~\cite[Thm.\ 6]{ivanovs_zooming}.
		
		Consider the case of a strictly $\a$-stable process $\X$. Let $f_+(x) := \Pi(x,1)$, $f_-(x) := \Pi(-1,-x)$ and $f(x) = f_-(x)+f_+(x)$ then according to~\cite[Thm.\ 2]{ivanovs_zooming} we have $f_\pm(x)\in\RV_{-\a}$ (or at least the dominating one) and $\ep f_\pm(\a_\ep)\to\tfrac{\widehat c_\pm}{\a}$. Since
		\[\int_x^1 y^p \Pi(\D y) = x^p f_+(x) + p\int_x^1 y^{p-1} f_+(y)\D y\]
		thus we have
		\begin{align*}
		& \frac{\ep}{a_\ep^p}\int_{a_\ep}^1 x^p\Pi(\D x) = \frac{\ep}{a_\ep^p} \bigg(a_\ep^p f_+(a_\ep) + p\int_{a_\ep}^1 x^{p-1} f_+(x)\D x\bigg) \\
		& \quad = \ep f_+(a_\ep) \cdot \left( 1 + \tfrac{p\int_{a_\ep}^1 x^{p-1} f_+(x)\D x}{a_\ep^p f_+(a_\ep)}\right) \to \frac{\widehat c_+}{\a} \cdot \bigg( 1 + \frac{p}{\a-p}\bigg)=\frac{\widehat c_+}{\a-p}
		\end{align*}
		and the result follows.
		
		Finally, consider the case, where $\X$ is a linear non-zero drift process. Then necessarily $\a=1$ and according to~\cite[Thm.\ 6 (ii)]{ivanovs_zooming} we must have 
		$x\overline\Pi(x)/m(x)\to 0$ and $\ep m(a_\ep)/a_\ep\to\widehat\gamma$. Letting $M(x)=\int_{x\leq |y|<1}|y|^p\Pi(\D y)$ note that it is sufficient to show that $\ep M(a_\ep)/a_\ep^p\to 0$.
		
		Let $f(x) = f_+(x) + f_-(x)$. The main difficulty here is that $f(x)$ is does not necessarily belong to the class $\RV_{-1}$ however we do have that $m\in\RV_0$ according to \cite[Proof of Thm.\ 6]{ivanovs_zooming}, see also its proof. We have $xf(x)/m(x)\to 0$ and $\ep m(a_\ep)/a_\ep\to\widehat\gamma$ thus $\ep f(a_\ep)\to 0$ and for any $\delta>0$ there exists $x_0$ such that $xf(x)\leq \delta m(x)$ for $x<x_0$. Then
		\begin{align*}
		& \frac{\ep}{a_\ep^p}\int_{a_\ep\leq|x|<1} |x|^p\Pi(\D x) = \frac{\ep}{a_\ep^p} \bigg(a_\ep^p f(a_\ep) + p\int_{a_\ep}^1 x^{p-1} f(x)\D x\bigg) \\
		& \quad = \ep f(a_\ep) + \frac{\ep m(a_\ep)}{a_\ep} \cdot \frac{p\int_{a_\ep}^{x_0} x^{p-2} m(x)\D x}{a_\ep^{p-1} m(a_\ep)} \cdot \delta + \frac{\ep}{a_\ep^p}\int_{x_0}^1 x^{p-1}f(x)\D x
		\end{align*}
		The first and the third term converge to $0$. Since $m\in\RV_0$, then according to Karamata's Theorem we have $\frac{\int_{a_\ep}^{x_0} x^{p-2} m(x)\D x}{a_\ep^{p-1} m(a_\ep)} \to \frac{1}{1-p}$ and since the choice of $\delta>0$ was arbitrary, we conclude that
		\[\frac{\ep}{a_\ep^p}\int_{a_\ep\leq|x|<1} |x|^p\Pi(\D x) \to 0,\]
		and the proof is complete.
	\end{proof}
	
	Next, we consider the general case where~\eqref{eq:zooming} does not necessarily hold, but redefine $b_n=n^{1/\a_+}$ for some $\a_+>\a$, and thus also $X^\n_t=b_n X_{t/n}$.
	\begin{lemma}\label{lem:gen_conv0}
		For any $\a_+>\a$ and $b_n=n^{1/\a_+}$ we have $X_1^\n\convd 0$. If, moreover, $p<\a_+$ and $\int_1^\infty x^p\Pi(\D x)<\infty$ then $\int_1^\infty x^p\Pi^\n(\D x)\to 0$ as $n\to\infty$.
	\end{lemma}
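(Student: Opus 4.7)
The plan is to prove the two assertions directly from the L\'evy triplet $(\gamma^\n,\sigma^\n,\Pi^\n)$ given by~\eqref{eq:n_triplet}, since the zooming-in assumption~\eqref{eq:zooming} is not in force here; the argument is a case analysis driven by~\eqref{eq:alpha}, and the estimates all ultimately reduce to the elementary fact that $b_n^\beta/n=n^{\beta/\a_+-1}\to 0$ for any $\beta<\a_+$.

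For the first claim I would show $\psi(\ii\theta b_n)/n\to 0$ for every $\theta\in\R$, where $\psi$ is the L\'evy--Khintchine exponent of $X$, so that the characteristic function of $X_1^\n$ tends to~$1$. The drift contribution is $\ii\gamma\theta b_n/n$ (or $\ii\gamma'\theta b_n/n$ when one uses the simplified exponent in the b.v.\ case), and one checks that $b_n/n\to 0$ whenever this term is present: when $\sigma\neq 0$ we have $\a_+>\a=2>1$; in the b.v.\ case with $\gamma'\neq 0$ we have $\a_+>\a=1$; and otherwise no drift needs to be handled. The Gaussian contribution $-\sigma^2\theta^2 b_n^2/(2n)$ vanishes because $\a_+>2$ whenever $\sigma\neq 0$. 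For the L\'evy integral I split at $|x|=1$: the tail part is bounded by $2\oPi(1)/n\to 0$, while on $|x|\leq 1$ I pick an auxiliary exponent $\beta\in(\beta_0,\a_+)$---nonempty since $\a_+>\a\geq\beta_0$---and use $|\ee^{\ii y}-1|\leq 2|y|^\beta$ in the b.v.\ subcase (choosing $\beta\leq 1$) or $|\ee^{\ii y}-1-\ii y|\leq C_\beta|y|^\beta$ in the ub.v.\ subcase (choosing $\beta\in[1,2]$, which is possible because then $\beta_0\geq 1$). Each resulting bound is a constant multiple of $b_n^\beta/n\to 0$.

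For the second claim, the substitution $y=b_n^{-1}x$ yields
\[
\int_1^\infty x^p\Pi^\n(\D x)=\frac{b_n^p}{n}\int_{1/b_n}^\infty y^p\Pi(\D y),
\]
which I split at $y=1$. The tail piece $\frac{b_n^p}{n}\int_1^\infty y^p\Pi(\D y)$ vanishes because the integral is finite by hypothesis and $b_n^p/n\to 0$ under $p<\a_+$. On $[1/b_n,1]$ fix $\beta\in(\beta_0,\a_+)$: if $p\geq\beta$ then $y^p\leq y^\beta$ on $(0,1]$, and if $p<\beta$ then $y^p\leq b_n^{\beta-p}y^\beta$ on $[1/b_n,1]$; in either case the contribution is at most $\frac{b_n^\beta}{n}\int_{|y|<1}|y|^\beta\Pi(\D y)\to 0$.

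The main obstacle is purely bookkeeping: one must check in each of the subregimes distinguished by~\eqref{eq:alpha} that an auxiliary exponent $\beta\in(\beta_0,\a_+)$ can be chosen inside the range demanded by the appropriate inequality for $\ee^{\ii y}-1$ or $\ee^{\ii y}-1-\ii y$ (namely $\beta\leq 1$ in the b.v.\ regime and $\beta\in[1,2]$ in the ub.v.\ regime), but this is always possible thanks to the definition~\eqref{eq:alpha} and the chain $\beta_0\leq\a<\a_+$.
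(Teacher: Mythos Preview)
Your approach is correct and uses the same device as the paper: bound everything by $(b_n|x|)^{\beta}$ with an auxiliary exponent $\beta>\beta_0$ so that $\int_{|x|<1}|x|^\beta\Pi(\D x)<\infty$ while $b_n^\beta/n=n^{\beta/\a_+-1}\to 0$. The only cosmetic difference is packaging: you compute the characteristic exponent directly, whereas the paper cites the triplet criterion \cite[Thm.~15.14]{kallenberg} and checks $m(b_n^{-1})b_n/n,\ v(b_n^{-1})b_n^2/n,\ \overline\Pi_\pm(ub_n^{-1})/n\to 0$ by the very same trick.

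Two small slips to tidy up. First, ``otherwise no drift needs to be handled'' is not quite right: in the ub.v.\ case with $\sigma=0$ you must use the standard (centred) L\'evy--Khintchine form, and the drift $\gamma$ is present. The fix is immediate, since ub.v.\ with $\sigma=0$ forces $\beta_0\geq 1$, hence $\a_+>\a=\beta_0\geq 1$ and $\gamma b_n/n\to 0$ anyway. (Relatedly, your justification ``$\beta_0\geq 1$ in the ub.v.\ subcase'' fails when $\sigma\neq 0$, but then $\a_+>2$ and $\beta=2$ always works.) Second, in the tail-integral estimate your unified bound $\frac{b_n^\beta}{n}\int_{|y|<1}|y|^\beta\Pi(\D y)$ is only valid when $\beta\geq p$; for $p>\beta$ the prefactor is $b_n^p$, not $b_n^\beta$. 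Since $p<\a_+$, simply choose $\beta\in(\beta_0\vee p,\a_+)$ from the start---exactly the paper's choice $\beta_+\in(\a\vee p,\a_+)$---and the case split disappears.
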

	\begin{proof}
		First, we consider the integral.
		As in the beginning of the proof of Lemma~\ref{lem:levyp_conv} it is sufficient to observe that
		\[\tfrac{b_n^p}{n}\int_{1/b_n}^1 x^p\Pi(\D x) \leq \tfrac{1}{n}\int_{1/b_n}^1 (b_n x)^{\beta_+}\Pi(\D x) \leq \tfrac{b_n^{\beta_+}}{n}\int_0^1 x^{\beta_+}\Pi(\D x)\to 0,\]
		where $\beta_+\in(\a\vee p,\a_+)$ and the latter integral is finite because $\beta_+>\a\geq \beta_0$.
		
		According to~\cite[Thm.\ 15.14]{kallenberg} the convergence $X_1^\n\convd 0$ is equivalent to $m(b_n^{-1})b_n/n,v(b_n^{-1})b^2_n/n,\overline\Pi_\pm(ub_n^{-1})/n\to 0$ for all $u>0$. All of these limits can be shown using the above trick, and we only consider the first quantity (the most tedious).
		If $\int_{|x|<1} |x|\Pi(\D x)<\infty$ then $m(b_n^{-1})=\gamma'+\int_{|y|<b_n^{-1}}y\Pi(\D y)$. The case $\a=1$ is trivial, but for $\a<1$ we have
		\[\frac{b_n}{n}\int_{|y|<b_n^{-1}}|y|\Pi(\D y)\leq \frac{1}{n}\int_{|y|<b_n^{-1}}|b_ny|^{\beta_+}\Pi(\D y)\leq \frac{b_n^{\beta_+}}{n}\int_{|y|<1}|y|^{\beta_+}\Pi(\D y)\to 0\]
		with $\beta_+\in(\a,\a_+\wedge 1)$. When $\int_{|x|<1} |x|\Pi(\D x)=\infty$ we have $\a\geq 1$ and it is sufficient to note that
		\[\frac{b_n}{n}\int_{b_n^{-1}<|y|<1}|y|\Pi(\D y)\leq \frac{b_n^{\beta_+}}{n}\int_{|y|<1}|y|^{\beta_+}\Pi(\D y)\to 0\]
		for $\beta_+\in(\a,\a_+)$.
		The proof is concluded.
	\end{proof}
	
	\begin{lemma}\label{lem:gen_conv2}
		In the cases (i) $p\leq \a=2$ and (ii) $p\leq \a=1$ with $X$ b.v., the sequences 
		$|\gamma^\n|, \sigma^\n,\int_{|x|\leq 1} x^2\Pi^\n(\D x),\int_1^\infty x^p\Pi^\n(\D x)$ for the scaling $b_n=n^{1/\a}$ are bounded provided that $\int_1^\infty x^p\Pi(\D x)<\infty$.
	\end{lemma}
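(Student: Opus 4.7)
The approach is a direct computation using the explicit scaling $b_n=n^{1/\alpha}$ together with the identity $\int f(y)\Pi^\n(\D y)=\frac1n\int f(b_n x)\Pi(\D x)$ that follows from $\Pi^\n(\D x)=\Pi(b_n^{-1}\D x)/n$. The Gaussian coefficient $\sigma^\n=\sigma b_n/\sqrt n$ is immediate: in case (i) with $\alpha=2$ it equals $\sigma$, and in case (ii) the b.v.\ hypothesis forces $\sigma=0$.

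For $\int_{|y|\le 1}y^2\Pi^\n(\D y)=\frac{b_n^2}{n}\int_{|x|\le b_n^{-1}}x^2\Pi(\D x)$, in case (i) the prefactor is $1$ and the integral is at most $\int_{|x|\le 1}x^2\Pi(\D x)<\infty$. In case (ii) we have $b_n=n$ and we exploit bounded variation (i.e.\ $\int_{|x|<1}|x|\Pi(\D x)<\infty$) via the bound $x^2\le |x|\cdot n^{-1}$ on the region of integration, giving an upper bound of $\int_{|x|<1}|x|\Pi(\D x)$.

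For the big-jump integral, write
\[
\int_1^\infty y^p\Pi^\n(\D y)=\frac{b_n^p}{n}\int_{b_n^{-1}}^1 x^p\Pi(\D x)+\frac{b_n^p}{n}\int_1^\infty x^p\Pi(\D x).
\]
On $[b_n^{-1},1]$ I will apply the inequality $x^p\le x^2 b_n^{2-p}$ in case (i) (where $p\le 2$) and $x^p\le|x|\cdot b_n^{1-p}$ in case (ii) (where $p\le 1$). This converts the first summand to a multiple of $\int x^2\Pi(\D x)$ or $\int|x|\Pi(\D x)$ with bounded prefactor $b_n^2/n=1$ respectively $b_n/n=1$. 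The second summand equals $n^{p/\alpha-1}\int_1^\infty x^p\Pi(\D x)$, which is bounded because $p\le\alpha$ and by hypothesis $\int_1^\infty x^p\Pi(\D x)<\infty$.

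Finally, $\gamma^\n=\tfrac{b_n}{n}\bigl(\gamma-\int_{b_n^{-1}\le|x|<1}x\,\Pi(\D x)\bigr)$. In case (i) the contribution $\tfrac{b_n}{n}\gamma=n^{-1/2}\gamma\to 0$, and on the integral region $|x|\ge b_n^{-1}$ I use $|x|\le x^2 b_n$ to get $\tfrac{b_n}{n}\int_{b_n^{-1}\le|x|<1}|x|\Pi(\D x)\le \tfrac{b_n^2}{n}\int_{|x|<1}x^2\Pi(\D x)=\int_{|x|<1}x^2\Pi(\D x)$. In case (ii), $b_n/n=1$ and the integral converges to the finite quantity $\int_{0<|x|<1}x\,\Pi(\D x)$, so $\gamma^\n\to\gamma'$ is bounded. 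There is no substantive obstacle here --- the only thing to watch is getting the exponents right when substituting $b_n=n^{1/\alpha}$, so I would organize the proof as four short displays, one per quantity, with the two cases treated in parallel.
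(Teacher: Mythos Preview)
Your proposal is correct and follows essentially the same approach as the paper's proof. Both rely on the elementary inequality $(b_n x)^p\le (b_n x)^\alpha$ for $x\ge b_n^{-1}$ and $p\le\alpha$ (equivalently, your $x^p\le x^\alpha b_n^{\alpha-p}$), together with the observation that $\int_{|x|<1}|x|^\alpha\Pi(\D x)<\infty$ in both cases (i) and (ii); the paper carries this out only for $\int_1^\infty x^p\Pi^\n(\D x)$ and leaves the remaining quantities to the reader, whereas you spell out all four.
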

	\begin{proof}
		The statement is trivially true for $\sigma^\n$ and the rest follows using the simple trick from Lemma~\ref{lem:gen_conv0}. We only consider 
		\[\int_1^\infty x^p\Pi^\n(\D x)=\frac{1}{n}\int_{n^{-1/\a}}^\infty (n^{1/\a}x)^p\Pi(\D x)\leq C+\frac{1}{n}\int_{n^{-1/\a}}^1 (n^{1/\a}x)^\a\Pi(\D x),\]
		where we used $p/\a-1\leq 0$ and convergence of $\int_1^\infty x^p\Pi(\D x)$. The second term converges to $\int_0^1 x^\a\Pi(\D x)<\infty$, which is finite in both cases (i) and~(ii).
	\end{proof}

	
	\begin{proof}[Proof of Proposition~\ref{prop:UI_onejump}]
		It is clear that $\indevent_{{A^\n}^c}$ converges to~$1$ in probability and so by Slutsky's Lemma we find that $\widetilde V^\n\convd \V$. Thus it is left to show uniform integrability.
		
		Conditional on the event $\{N=k,{A^\n}^c\}$, split the process into $k+1$ pieces separated by the big jumps, where each piece starts at 0 and does not include the terminating big jump. Let $V^\n_{i,k}$ be the (conditional) discretization error for the supremum of the $i$-th piece keeping the original grid.
		Since each piece is at least $1/n$ long and conditioning affects only the length of the piece, the same argument as in the proof of Theorem~\ref{thm:UI} based on Lemma~\ref{lem:bertoin} shows that the $1+\epsilon$ moment of $V^\n_{i,k}$ is bounded by a constant $C$ not depending on $i,k$. Note that by removing big jumps we still have a process in $\mathcal D_{\a,\rho}$, but now $\beta_\infty=\infty$.
		
		It is left to note that the conditional $V^\n$ is bounded by the maximum over $V_{i,k}^\n$ which in turn is bounded by the sum.
		Hence using Minkowski's inequality we find
		\begin{align*}
		\e \left( \widetilde V^\n\right)^{1+\epsilon} & \leq \sum_{k=0}^\infty \p(N=k)\e \left(\left( V^\n\right)^{1+\epsilon}|{A^\n}^c,N=k\right)\\
		& \leq \sum_{k=0}^\infty \p(N=k) (k+1)^{1+\ep} C \leq \e (1+N)^{1+\ep} C<\infty
		\end{align*}
		and the proof is complete.
	\end{proof}

	\begin{proposition}\label{prop:cpp}
		Let $X$ be a compound Poisson process with drift $\gamma\in\R$ satisfying $\int_{|x|>1}|x|\Pi(\D x) < \infty$. Then~\eqref{eq:cpp} holds true.
	\end{proposition}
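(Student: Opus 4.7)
Since $X$ is compound Poisson with drift $\gamma$ and rate $\lambda=\Pi(\R)<\infty$, jumps occur at isolated points $T_1<\ldots<T_N$ in $[0,1]$ with i.i.d.\ sizes $Y_i$ of law $\Pi/\lambda$, and the tail hypothesis guarantees $\e|M|<\infty$. The plan is to split
\[n\e(M-M^\n)=n\e(M-M^\n;{A^\n}^c)+n\e(M-M^\n;A^\n)\]
with $A^\n$ as in~\eqref{eq:A}. On ${A^\n}^c$ every grid cell contains at most one jump, so a direct sample-path analysis yields: $\tau\in\{0,1\}$ gives error $0$; for $\gamma>0$ with $\tau=T_j^-$ (some $Y_j<0$) the error equals $\gamma\{T_jn\}/n$ (realised at the grid point $\lfloor T_jn\rfloor/n$); for $\gamma<0$ with $\tau=T_j$ (some $Y_j>0$) it equals $|\gamma|(1-\{T_jn\})/n$; and the error is $0$ if $\gamma=0$. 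Conditioning on the a.s.\ irrational jump configuration, Weyl equidistribution of $\{T_jn\}$ combined with bounded convergence gives $\e\bigl(\{T_jn\}\indevent_{\tau=T_j^-}\indevent_{{A^\n}^c}\bigr)\to\tfrac12\p(\tau=T_j^-)$; summing over $j$,
\[n\e(M-M^\n;{A^\n}^c)\to\tfrac12|\gamma|\,\p(\tau\in(0,1)).\]

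On $A^\n$, configurations with two or more pairs within $1/n$ have probability $\Oh(1/n^2)$ and contribute $\oh(1/n)$ to $\e(\,\cdot\,;A^\n)$; we therefore restrict to exactly one close pair $(T_i,T_{i+1})$. A case analysis in $(\sign(Y_i),\sign(Y_{i+1}))$ shows that the patterns $(\pm,\pm)$ and $(-,+)$ produce only $\Oh(1/n)$ drift-type errors (hence $\oh(1)$ after scaling by $n$), and for $(+,-)$ the adjacent-cell sub-event is likewise negligible because the intermediate grid point captures the local maximum up to $\Oh(1/n)$. On the same-cell $(+,-)$ sub-event, writing $x:=Y_i$ and $y:=-Y_{i+1}$, a direct computation gives
\[M-M^\n = \bigl((x-R)\wedge(y-Q)\bigr)_+ + \Oh(1/n),\]
where $R:=\overline X_{T_i^-}-X_{T_i^-}$ and $Q:=\sup_{s\in(T_{i+1},1]}(X_s-X_{T_{i+1}})$. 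By L\'evy duality (time reversal on $[0,T_i]$) and the strong Markov property at $T_{i+1}$, $(R,Q)$ is independent of the added pair with $R\eqd-\underline X_{T_i}$ and $Q\eqd\overline X_{1-T_{i+1}}$.

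Applying the second-order Campbell formula (equivalently Slivnyak's theorem) to the marked Poisson process of jumps and integrating the inner pair-location variables over the region $\{t_2-t_1\in(0,1/n),\ \lfloor t_1n\rfloor=\lfloor t_2n\rfloor\}$, whose Lebesgue measure at fixed $t_1$ equals $(1-\{t_1n\})/n$, yields
\[n\e(M-M^\n;A^\n) = \int_0^1(1-\{tn\})\iint \Pi(\D x)\Pi(-\D y)\,\e\bigl((x-R_t)\wedge(y-Q_{1-t})\bigr)_+\D t + \oh(1).\]
The exact identity $\int_0^1(1-\{tn\})\D t=1/2$ for integer $n$, together with dominated convergence (the $t$-integrand being continuous and bounded on $[0,1]$), produces the limit $\tfrac12 I$. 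Adding the two pieces gives~\eqref{eq:cpp}. The main obstacle is the sign-pattern bookkeeping required to verify that every configuration other than same-cell $(+,-)$ contributes $\oh(1)$ after scaling; the factor $\tfrac12$ in front of $I$ reflects the asymptotic probability $1/2$ that two jumps within $1/n$ share the same grid cell.
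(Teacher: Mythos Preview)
Your decomposition and the identification of the contributing configurations are essentially those of the paper: both split according to whether some grid cell carries more than one jump, obtain the $\tfrac12|\gamma|\p(\tau\in(0,1))$ term from the one-jump regime, and the $\tfrac12 I$ term from the $(+,-)$ same-cell pair via the formula $((x-R)\wedge(y-Q))_+$. The sign bookkeeping you carry out explicitly is absorbed in the paper by noting that $((J_1+L_t)\wedge(-J_2-R_t))_+$ vanishes automatically for the other patterns.

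There is, however, a genuine gap that recurs twice. For the drift term you invoke ``Weyl equidistribution of $\{T_jn\}$ combined with bounded convergence'', and for the $I$-term you invoke ``the exact identity $\int_0^1(1-\{tn\})\D t=\tfrac12$ \ldots\ together with dominated convergence''. Neither is valid as written: Weyl's theorem yields only Ces\`aro convergence of $(\{T_j n\})_{n\geq 1}$, not convergence along $n$; and for fixed irrational $t$ the sequence $(1-\{tn\})$ does not converge, so bounded/dominated convergence cannot be applied. What is actually needed is a Riemann-sum (Fej\'er-type) argument: since your $g(t)=\iint\e((x-R_t)\wedge(y-Q_{1-t}))_+\Pi(\D x)\Pi(-\D y)$ is continuous, one has
\[\int_0^1(1-\{tn\})g(t)\D t=\sum_{k=0}^{n-1}\int_{k/n}^{(k+1)/n}(1-(tn-k))g(t)\D t\to\tfrac12\int_0^1 g(t)\D t,\]
and an analogous density-based argument (using that $\tau$ has a density on $(0,1)$) handles the drift term. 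Your Campbell/Slivnyak route is perfectly valid once this step is repaired.

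The paper sidesteps both issues. For the drift part it quotes the zooming-in limit $(n(M-M^\n)\mid\tau\in(0,1))\convd\V\eqd|\gamma|U$ from the main theory, combined with the deterministic bound $n(M-M^\n)\indevent_{A_n}\leq|\gamma|$, so uniform integrability is immediate. For the $I$ part it parametrizes by the cell index $k$ rather than by the jump location $t_1$; the factor $\tfrac12$ then arises from $\p(N_{k,n}=2)\sim\lambda^2/(2n^2)$, and the remaining Riemann sum $\tfrac1n\sum_k G^{(n\pm)}(k/n)$ is handled by honest pointwise convergence $G^{(n\pm)}(t)\to G(t)$ together with dominated convergence.
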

	\begin{proof}
		For $n\in\N$, $k\in\{0,\ldots,n-1\}$ define
		\begin{align*}
		N_{k,n} := \# \left\{t\in\left(\tfrac{k}{n},\tfrac{k+1}{n}\right) : X_{t-}\neq X_t\right\}, \quad A_{k,n} := \{N_{k,n} \leq 1\}
		\end{align*}
		and put $A_n := \bigcap_{k=0}^{n-1} A_{k,n}$. We have the decomposition
		\[n\e(M-M^\n) = n\e(M-M^\n; A_n) + n\e(M-M^\n; A_n^c).\]
		
		{\it Step 1.} Show that $n\e(M-M^\n; A_n)\to \tfrac{1}{2}|\gamma|\p(\tau\in(0,1))$. This is clear when $\gamma=0$; in the following we assume $\gamma\neq0$. Since then $X$ is in a domain of attraction of linear drift, it follows that
		\[\Big(n(M-M^\n) \mid \tau\in(0,1)\Big) \convd  \V \eqd |\gamma|U,\]
		where $U$ is uniformly distributed over $[0,1]$. Combination of Slutsky Lemma with the uniform integrability of $(M-M^\n)\ind{A_n}$ yields the result.
		
		{\it Step 2.} Show that, with $B_k := \{N_{k,n}=2,\cap_{i\neq k} A_{i,n}\}$, the following are equal up to $\oh(1)$ term:
		\begin{equation}\nonumber
		\begin{split}
		& n\e(M-M^\n;A_n^c),\quad \textstyle\sum_{k=0}^{n-1}n\e(M-M^\n;A_{k,n}^c),\\
		& \textstyle\sum_{k=0}^{n-1}n\e(M-M^\n;N_{k,n}=2),\quad  \textstyle\sum_{k=0}^{n-1}n\e(M-M^\n;B_k).
		\end{split}
		\end{equation}
		This step is a rather tedious, but also, a pretty straightforward application of inclusion-exclusion principle. We only show the first equivalence, as the rest is similar. Note that $\p(A_{k,n})= O(n^{-2})$ and that we have a very crude upper bound $M-M^\n \leq \gamma + \sum_{k=1}^N |J_k|$, where $N$ is the number of jumps of CP process and $J_1,J_2,\ldots$ are iid jumps. For $j<k<n$ we have
		\begin{align*}
		\e(M-M^\n;A_{j,n}^c\cap A_{k,n}^c)  \leq \p(A_{j,n}^c\cap A_{k,n}^c)\e\Big(\gamma + \textstyle\sum_{k=1}^N |J_k| \,\big|\, A_{j,n}^c\cap A_{k,n}^c\Big) \\
		= \p(A_{k,n})^2\e\Big(\gamma + \textstyle\sum_{k=1}^N |J_k| \,\big|\, N\geq 4\Big) \leq Cn^{-4},
		\end{align*}
		where $C>0$ does not depend on $j,k,n$. This implies that
		\[\textstyle\sum_{0\leq j<k<n}n\e(M-M^\n;A_{j,n}^c\cap A_{k,n}^c) \to 0.\]
		
		{\it Step 3.} Notice that when $X$ is a Compound Poisson process then $I$ has an alternative representation:
		\[I = \lambda^2\e\Big((J_1 + \underline X_{U})\wedge (-J_2 - \overline X'_{1-U})\Big)^+,\]
		where $\lambda = \Pi(\R)$, $U$ is uniformly distributed over $[0,1]$, $X'$ is a statistical copy of $X$, random variables $J_1,J_2$ have the law $\Pi(\D x)/\lambda$, and $U,X,X',J_1,J_2$ are independent. 
		
		{\it Step 4.} Show that $\sum_{k=0}^{n-1}n\e(M-M^\n;B_k)\to \tfrac{1}{2}I$. Working on the event $B_k$, let $J_1,J_2$ be the two jumps in time interval $\left(\tfrac{k}{n},\tfrac{k+1}{n}\right)$ in the order of occurrence. Moreover, let
		\[L_t := X_t - \textstyle\sup_{s\in[0,t]} X_s, \quad R_t := \textstyle\sup_{t\in[s,1]} X_s - X_t\]
		and notice that $L_{t_1},R_{t_2}$ are independent when $t_1\leq t_2$. We have
		\begin{align*}
		(M-M^\n)\ind{B_k} \geq \left(\big((J_1+L_{k/n})\wedge(-J_2-R_{(k+1)/n})\big)^+ - |\gamma|/n\right)\ind{\cap_{i\neq k}A_{i,n}}
		\end{align*}
		and an analogous upper bound holds true, with $+|\gamma|/n$ instead of $-|\gamma|/n$. Now, we denote
		\begin{align*}
		G^{(n-)}(t) & := \e \left(\big((J_1+L_t)\wedge (-J_2- R_t)\big)^+ - |\gamma|/n; A_n\right) \\
		G^{(n+)}(t) & := \e \big((J_1+L_t)\wedge (-J_2-R_t)\big)^+ + |\gamma|/n.
		\end{align*}
		$L_t$ and $R_t$ are stochastically non-increasing and non-decreasing respectively since $L_t \eqd \underline X_t$, $R_t\eqd \overline X_{1-t}$ (this holds true also on the event $A_n$) thus
		\[G^{(n-)}((k+1)/n) \leq (M-M^\n)\ind{B_k} \leq G^{(n+)}(k/n).\]
		It is clear that $G^{(n\pm)}(t)\to G(t)$ point-wise, where $G(t) := \e \big((J_1+\underline X_t)\wedge (-J_2-\overline X'_{1-t})\big)^+$. Since $\p(N_{k,n}=2) = \frac{\lambda^2}{2n^2}e^{-\lambda/n}$, we have
		\begin{align*}
		&\sum_{k=1}^{n-1} n\e(M-M^\n;B_k) \leq \sum_{k=0}^{n-1}n\p(N_{k,n}=2)G^{(n+)}(k/n) \\
		& \qquad \qquad= \tfrac{\lambda^2}{2}\sum_{k=0}^{n-1} \tfrac{1}{n}G^{(n+)}(k/n) \to \tfrac{\lambda^2}{2}\int_0^1 G(t)\D t = \tfrac{1}{2} I,
		\end{align*}
		where we used dominated convergence. Analogous reasoning leads to the same lower bound, which concludes the proof.
	\end{proof}
	
	
	\subsection{Proofs for Section~\ref{sec:detection_error}}\label{app:detection_error}
	
	\begin{proof}[Proof of Lemma~\ref{lem:bounded_density}]
		Letting $\phi(\theta)=\psi(\ii \theta)$, we note that the condition~\eqref{eq:orey} ensures the following bound on the characteristic function of $X_t$: $|e^{\phi(\theta)t}|\leq \exp(-ct|\theta|^\gamma)$ for some $c>0$ and $|\theta|>1$, see~\cite[Lem.\ 2.3]{picard}.
		By the inversion formula we have
		\[p(t,x)=\frac{1}{2\pi}\int_\R e^{-\ii x\theta+\phi(\theta)t}\D \theta,\]
		because the characteristic function $e^{\phi(\theta)t}$ is integrable.
		Thus $p(t,x)$ is bounded for all $t>\delta,x\in\R$, and so we need to consider $t\in(0,\delta],x>\delta$ since the case $x<-\delta$ is analogous.
		
		Assume for a moment that $X$ has no jumps larger than $1$ in absolute value, and so $\phi(\theta)$ is smooth.	
		From the L\'evy-Khintchine formula we find that $ |\phi'(\theta)|\leq c_0+c_1|\theta|$ and $|\phi^{(k)}(\theta)|\leq c_k$ for $k\geq 2$ and some positive constants $c_k$; for this we differentiated under the integral with respect to $\Pi(\D x)$ and used the inequality $|e^{\ii a}-1|\leq |a|$.
		Integration by parts gives
		\[\int_0^\infty e^{-\ii x\theta+\phi(\theta)t}\D \theta=
		\frac{1}{\ii x}+\int_0^\infty \frac{1}{\ii x}\phi'(\theta)te^{-\ii x\theta+\phi(\theta)t}\D \theta,\]
		and it would be sufficient to establish that  
		\[\int_1^\infty t(c_0+c_1\theta) \exp(-c\theta^\gamma t)\D \theta\]
		is bounded for all $t\in(0,\delta)$. This, however, is only true for $\gamma=2$. Nevertheless, we may apply integration by parts $k$ times to arrive at the bound:
		\[\int_1^\infty A(t,\theta) \exp(-c\theta^\gamma t)\D \theta,\]
		where $A(t,\theta)$ is a weighted sum of the terms $\theta^it^j$ with $i<j$ and $i=j=k$; one may also use Fa\`a di Bruno's formula here.
		Note that 
		\[\int_0^\infty \theta^it^j \exp(-c\theta^\gamma t)\D \theta=t^{j-(i+1)/\gamma}\int_0^\infty \theta^i\exp(-c\theta^\gamma)\D \theta,\]
		which is bounded for small $t$ when $\gamma\geq (i+1)/j$. Since $\gamma>1$ this inequality is always satisfied for the integers $i<j$, whereas for $i=j=k$ we get $\gamma\geq (k+1)/k$ and so we simply need to ensure that $k$ is sufficiently large.
		
		Suppose now that $X_t=\widehat X_t+Y_t$ is an independent sum, where $Y$ is a Poisson process with jumps larger than $1$ . The density of $X_t$ is given by
		\[p(t,x)=\int \p(Y_t\in\D z) \widehat p(t,x-z)\leq \p(Y_t=0)\widehat p(t,x)+\p(Y_t\neq 0)\sup_x\widehat p(t,x),\]
		where $\widehat p(t,x)$ is bounded on the set away from the origin. It is thus sufficient to show that
		the second term  stays bounded as $t\downarrow 0$. But $\p(Y_t\neq 0)$ is of order $t$ and $\sup_x\widehat p(t,x)=\O(t^{-1/\gamma})$ according to~\cite[Thm.\ 3.1]{picard} completing the proof.
		
		Finally, suppose that~\eqref{eq:orey} is satisfied with $\gamma<1$ but for some $\gamma'\in(\gamma,1)$, we have that
		\[\lim_{\epsilon\to 0}\epsilon^{\gamma'-2}\int_{-\epsilon}^\epsilon x^2\Pi(\D x)=0\]
		which according to~\cite[Thm.\ 3.1(b)]{picard} implies $\sup_x p(t,x)\geq c t^{-1/\gamma'}$ for $t$ small enough.
		We may assume that for small enough $t$ the supremum is achieved by $x\in[-\delta,\delta]$, because otherwise we have a contradiction.
		Now suppose that $\Pi(\D x)$ has a point mass at $1$, so that with probability of order $t$ there is one jump of size~$1$. 
		But then $\sup_{x\in[1-\delta,1+\delta]} p(t,x)\geq c_1 t^{1-1/\gamma'}\to \infty$ as $t\to 0$ showing that $p(t,x)$ explodes away from~$x=0$.
	\end{proof}
	
	\section{Correction}\label{app:correction}
	The results of this paper build on~\cite{ivanovs_zooming} which, however, has a mistake and a gap in the proofs, and we correct them in the following. 

	\medskip	
	Firstly, we note that convergence of transforms
	\[\int_0^\infty e^{-qt}f_n(t)\D t\to \int_0^\infty e^{-q t}f(t)\D t\qquad\text{ for all }q>0\]
	for bounded non-negative $f_n,f$ does not imply that $f_n(t)\to f(t)$ for almost all $t>0$ (it is true for the cumulative functions $\int_0^t f_n(s)\D s$).
	This renders the last paragraph in the proof of Theorem~4 in~\cite{ivanovs_zooming} invalid, and hence we only have the limit theorem for a killed L\'evy process, that is, when the time horizon $T$ is assumed to be an independent exponential random variable. 
	Nevertheless, we may use the fact that the convergence is R\'enyi-mixing to provide a simple extension to a deterministic~$T$.
	
	Consider the process $X$ on the time interval $[0,T]$ and let $M,\tau$ denote the supremum and its time. Let $F^\epp_T$ be a bounded functional of $(X_{\tau+s\ep}-M)_{s\in[-r,r]}$ for some fixed number $r>0$, and let $A$ and $B$ be events in $\sigma(X_s,s\in[0,1])$ and in $\sigma(X'_s,s\geq 0)$, respectively, where $X'_s=X_{1+s}-X_1$. In the following we assume that $T$ is an independent exponential.  It is established in~\cite{ivanovs_zooming} that 
	\begin{equation}\label{eq:mistake}\e (F^\epp_T;A|T\geq 1,B)\to f\p(A|T\geq 1,B)=f\p(A)\end{equation}
	as $\ep\downarrow 0$. Here $\p(T\geq 1,B)>0$ and $f$ does not depend on the choice of~$A$ and $B$, since we have R\'enyi-mixing convergence for a killed original process. 
	
	It is only required to prove that $\e (F^\epp_1;A)\to f\p(A)$.
	Assume for a moment that $A$ implies $\overline X_1\neq X_1$ and choose 
	\[A'=A\cap\{\overline X_1=\overline X_{1-\delta},\overline X_1-X_1>h\},\quad B'=\{\overline X'<h\},\]
	so that on $T\geq 1$ the event $A'\cap B'$ implies that $\tau<1-\delta$ and hence for $\ep<\delta/r$ we have $F^\epp_T=F^\epp_1$. Thus
	\[\e (F^\epp_1;A')=\e (F^\epp_1;A'|T\geq 1,B')=\e (F^\epp_T;A'|T\geq 1,B')\to f\p(A').\]
	Moreover, we have the bound
	\[\sup_{\ep>0}|\e (F^\epp_1;A)-\e (F^\epp_1;A')|\leq C\p(\overline X_1\neq \overline X_{1-\delta}\text{ or }\overline X_1-X_1\leq h)\to 0\]
	as $\delta,h\downarrow 0$, since we have assumed that the supremum over $[0,1]$ is not attained at~$1$.
	This shows 
	\[\e (F^\epp_1;A,\overline X_1\neq X_1)\to f\p(A,\overline X_1\neq X_1)\]
	for an arbitrary event $A\in\sigma(X_s,s\in[0,1])$.
	When $\p(\overline X_1=X_1)>0$, we may look at the time-reversed process on the event that it does not become negative. The result in this case is, in fact, trivial and the proof of Theorem~4 in~\cite{ivanovs_zooming} is fixed.
	
	\medskip
	Secondly, there is a problem with the continuity of some basic functionals of sample paths on infinite time intervals. In particular, the supremum and its time are not continuous on the Skorokhod space $D[0,\infty)$,
	even though they are obviously continuous on the relevant subsets of $D[0,T]$ for any $T>0$.
	This concerns the proof of Theorem 5 as well as the proof of Theorem 3 in Appendix of~\cite{ivanovs_zooming}.
	The main tool to overcome this issue is the approximation Lemma, see~\cite[Thm.\ 3.2]{billingsley} or~\cite[Thm.\ 4.28]{kallenberg}.
	
	In the following we provide a correction to the proof of Theorem 5 in~\cite{ivanovs_zooming}, and note that the same bounds can be used with respect to Theorem 3. 
	More concretely, we need to show for any $a>0$ that
	\begin{equation}\label{eq:correction2}\lim_{T\to\infty}\limsup_{\ep\downarrow 0}\p(\sup_{t\geq T}{Y^\epp_t}\geq - a)= 0,\end{equation}
	where $Y^\epp_t=(X_{\tau+t\ep}-M)/a_\ep$ is the  post-supremum process of $X^\epp$ corresponding to time-space rescaling of~$X$; similar statement is needed for the pre-supremum process, see also~\cite[Lem.\ 4]{asmussen_glynn_pitman1995}.
	
	Note that it is sufficient to prove~\eqref{eq:correction2} for $X$ killed at an independent exponential time instead of time~$1$, and we assume this in the following. Next, we use the Markov property to see that 
	\begin{equation}\label{eq:limsup}\p(\sup_{t\geq T}{Y^\epp_t}\geq -a)=\int_{-\infty}^0 \p_x(\sup_{t\geq 0}{Y^\epp_t}\geq -a)\p(Y^\epp_T\in \D x),\end{equation}
	where $Y^\epp$ under $\p_x$ is the (killed) process conditioned to stay negative, and the measures $\p(Y^\epp_T\in \D x)$ have a proper weak limit as $\varepsilon\downarrow 0$. Furthermore, for $x>a$ we have
	\[\p_{-x}(\sup_{t\geq 0}{Y^\epp_t}\geq -a)=1-m^\epp(x-a)/m^\epp(x)\]
	with $m^\epp(x)=\e\int_0^\infty\ind{H^\epp_t<x}\D t$ analogously to~\cite[Thm.\ 1]{chaumont} establishing this formula for the limit process. According to the proof of~\cite[Thm.\ 3]{ivanovs_zooming} $m^\epp(x)\to m(x)$ for all $x>0$ with $m$ corresponding to the self-similar limiting process. Moreover, the functions $m^\epp,m$ are monotone and continuous, and so $m^\epp(x_\ep)\to m(x)$ whenever $x_\ep\to x>0$. Hence~\eqref{eq:limsup} converges as $\ep\downarrow 0$ to the respective probability for the limit post-supremum process. The result follows upon letting $T\to\infty$.
\end{document}